\setlist[itemize]{itemsep=0mm} 
\numberwithin{equation}{section} 
\titleformat*{\section}{\Large \scshape\center} 
\titleformat*{\subsection}{\fontsize{14}{14} \sffamily} 
\theoremstyle{plain}
\newtheorem{theorem}{Theorem}[section]
\newtheorem*{theorem*}{Theorem} 
\newtheorem{lemma}[theorem]{Lemma}
\newtheorem{proposition}[theorem]{Proposition}
\newtheorem{corollary}[theorem]{Corollary}
\theoremstyle{definition}
\newtheorem{definition}[theorem]{Definition}
\newtheorem{example}[theorem]{Example}
\newcommand{\Addresses}{{
  \bigskip
  \footnotesize

  \textsc{Department of Mathematical Sciences, Norwegian University of Science and Technology,\\ 7491 Trondheim, Norway.}\par\nopagebreak
  \textit{E-mail address}: \texttt{eirik.berge@ntnu.no}
}}
\theoremstyle{remark}
\newtheorem*{remark}{Remark}
\begin{document}
\pagenumbering{gobble}
\title{\Huge{$\alpha$-modulation spaces for step two \\ stratified Lie groups}}
\author{Eirik Berge}
\date{}
\maketitle

\pagenumbering{arabic}

\begin{abstract}
    We define and investigate $\alpha$-modulation spaces $M_{p,q}^{s,\alpha}(G)$ associated to a step two stratified Lie group $G$ with rational structure constants. This is an extension of the Euclidean $\alpha$-modulation spaces $M_{p,q}^{s,\alpha}(\mathbb{R}^n)$ that act as intermediate spaces between the modulation spaces ($\alpha = 0$) in time-frequency analysis and the Besov spaces ($\alpha = 1$) in harmonic analysis. We will illustrate that the the group structure and dilation structure on $G$ affect the boundary cases $\alpha = 0,1$ where the spaces $M_{p,q}^{s}(G)$ and $\mathcal{B}_{p,q}^{s}(G)$ have non-standard translation and dilation symmetries. Moreover, we show that the spaces $M_{p,q}^{s,\alpha}(G)$ are non-trivial and generally distinct from their Euclidean counterparts. Finally, we examine how the metric geometry of the coverings $\mathcal{Q}(G)$ underlying the $\alpha = 0$ case $M_{p,q}^{s}(G)$ allows for the existence of geometric embeddings \[F:M_{p,q}^{s}(\mathbb{R}^k) \longrightarrow{} M_{p,q}^{s}(G),\] as long as $k$ (that only depends on $G$) is small enough. Our approach naturally gives rise to several open problems that is further elaborated at the end of the paper.
\end{abstract}
\section{Introduction}
\label{sec: Introduction}

The modulation spaces $M_{p,q}^{s}(\mathbb{R}^n)$ in time-frequency analysis and the (inhomogeneous) Besov spaces $\mathcal{B}_{p,q}^{s}(\mathbb{R}^{n})$ in harmonic analysis are invaluable in their own fields. They are connected by the existence of a one-parameter family of Banach spaces $M_{p,q}^{s,\alpha}(\mathbb{R}^{n})$ where $0 \leq \alpha \leq 1$ such that the aforementioned spaces are the boundary cases $\alpha = 0$ and $\alpha = 1$. It was in the Ph.D. thesis \cite{GrobnerThesis} that the \textit{$\alpha$-modulation spaces} $M_{p,q}^{s,\alpha}(\mathbb{R}^n)$ were first introduced and they have subsequently been investigated for a plethora of reasons: The $\alpha$-modulation spaces $M_{p,q}^{s,\alpha}(\mathbb{R}^{n})$ are suitable spaces for studying diverse questions such as boundedness of psuedodifferential operators \cite{BN2}, embedding questions \cite{Embedding_alpha_mod, Felix_main}, and Banach frame expansions \cite{BN1}. Moreover, the spaces $M_{p,q}^{s,\alpha}(\mathbb{R}^{n})$ have found applications in non-linear approximation theory \cite{BN4} and for studying the Cauchy problem for nonlinear Schr\"{o}dinger equations \cite{Han2, Han3}. \par 
The modulation spaces $M_{p,q}^{s}(\mathbb{R}^n)$ are built out of a uniform covering $\mathcal{U}(\mathbb{R}^n)$ on $\mathbb{R}^n$, while the Besov spaces $\mathcal{B}_{p,q}^{s}(\mathbb{R}^{n})$ have a dyadic covering $\mathcal{B}(\mathbb{R}^n)$ associated to them. The intermediate spaces $M_{p,q}^{s,\alpha}(\mathbb{R}^n)$ have associated coverings $\mathcal{Q}^{\alpha}(\mathbb{R}^n)$ that interpolate between the extreme cases $\mathcal{U}(\mathbb{R}^n)$ and $\mathcal{B}(\mathbb{R}^n)$. It is advantageous for several of the applications mentioned above to extend the $\alpha$-modulation spaces to a setting that include non-uniform translation and dilation symmetries. Modulation spaces can be defined on locally compact abelian groups \cite{Hans_3}, while the (homogeneous) Besov spaces have been generalized to stratified Lie groups in \cite{Atomic_Besov} through integrability properties of the sub-Laplacian. We aim to extend all the $\alpha$-modulation spaces $M_{p,q}^{s,\alpha}(\mathbb{R}^n)$ to the setting of stratified Lie groups through a more geometric approach that emphasizes the underlying coverings mentioned above. The choice to extend the $\alpha$-modulation spaces to stratified Lie groups is motivated by the desire to obtain the following two properties for the resulting spaces $M_{p,q}^{s,\alpha}(G)$:

\begin{enumerate}[label=(\roman*)]
\label{enumerate_intro}
    \item We can realize all the elements in $M_{p,q}^{s,\alpha}(G)$ as distributions on $\mathbb{R}^n$ where $n = \textrm{dim}(G)$. This will allow us to use the Euclidean Fourier transform in the description of the spaces $M_{p,q}^{s,\alpha}(G)$. 
    \item The fact that any stratified Lie group possesses dilations and a (typically non-abelian) group structure is needed for a satisfying definition of the boundary cases $\alpha = 0,1$.
\end{enumerate}

For a stratified Lie group $G$ it is possible to identify $G$ with $(\mathbb{R}^n,*_G)$ where $n = \textrm{dim}(G)$ and $*_G$ is a product that is polynomial in each component. The initiated reader should have the Heisenberg groups $\mathbb{H}_n$ in mind. The special case $M_{p,q}^{s,0}(\mathbb{H}_n)$ has already been investigated in \cite{David} with the help of representation theory. \par
We are able to define the spaces $M_{p,q}^{s,\alpha}(G)$ for any stratified Lie group $G$. However, we can only assure that the definition is not vacuous whenever the step of $G$ is less than or equal two. The reason for this will be explained and discussed further in Subsection \ref{sec: Almost_Structured_Coverings}. Although we expect the \textit{generalized $\alpha$-modulation spaces} $M_{p,q}^{s,\alpha}(G)$ to be well-defined for all stratified Lie groups $G$, we are not able to show this with current methods. Moreover, for the most part we need to restrict to the stratified Lie groups $G$ being \textit{rational}, meaning that there exists a lattice $N \subset G$. This is a mild condition and is easily checked in practice. Whenever possible, we will state results for arbitrary stratified Lie groups in the hope that the restriction to rational stratified Lie groups with step less than or equal two will be removed in the future. \par 
The two properties \hyperref[enumerate_intro]{(i)} and \hyperref[enumerate_intro]{(ii)} above can be considered as necessary conditions for studying the spaces $M_{p,q}^{s,\alpha}(G)$. However, two generalizations are not equally rewarding and the reader should be skeptical whether this initial outset yields satisfying results. Except for expecting the spaces $M_{p,q}^{s,\alpha}(G)$ to satisfy basic results regarding completeness, duality and so on, the following five questions seem appropriate to answer:

\begin{enumerate}[label=\arabic*)]
\label{main_properties}
    \item Are there coverings $\mathcal{Q}^{\alpha}(G)$ on $\mathbb{R}^n$ associated to the spaces $M_{p,q}^{s,\alpha}(G)$ in the same manner as in the Euclidean setting? Moreover, do these coverings reflect some geometric property of the stratified Lie group $G$ in the uniform case $\alpha = 0$?
    \item Can one use the spaces $M_{p,q}^{s,\alpha}(G)$ for an application without extensive knowledge of stratified Lie groups? That is, can someone with a time-frequency analysis or harmonic analysis background effectively work with the spaces $M_{p,q}^{s,\alpha}(G)$?
    \item Have any of the spaces $M_{p,q}^{s,\alpha}(G)$ previously appeared in the literature? Are the spaces $M_{p,q}^{s,\alpha}(G)$ interesting whenever $G$ is not isomorphic to $(\mathbb{R}^n,+)$ as a Lie group?
    \item Is the extension from $M_{p,q}^{s,\alpha}(\mathbb{R}^n)$ to $M_{p,q}^{s,\alpha}(G)$ uninteresting in the sense that the definitions need only be trivially modified to obtain spaces with analogous properties? Do all the techniques used when studying the Euclidean $\alpha$-modulation spaces $M_{p,q}^{s,\alpha}(\mathbb{R}^n)$ extend in an obvious way to solve the same problems for the spaces $M_{p,q}^{s,\alpha}(G)$?
    \item Are the new spaces $M_{p,q}^{s,\alpha}(G)$ distinct from their Euclidean counterparts $M_{p,q}^{s,\alpha}(\mathbb{R}^n)$? More precisely, is it possible that \[M_{p_1,q_1}^{s_1,\alpha_1}(G) = M_{p_2,q_2}^{s_2,\alpha_2}(\mathbb{R}^n),\] for some parameters $1 \leq p_1, p_2, q_1, q_2 \leq \infty$, $s_1,s_2 \in \mathbb{R}$, and $0 \leq \alpha_1, \alpha_2 \leq 1$?
\end{enumerate}

We will not attempt to address the first four questions in the introduction, but will answer them throughout the paper and return to them again in Section \ref{sec: Looking_Back_and_Ahead}. The fifth question turns out to be the most challenging and the answer given in Theorem \ref{novelty_theorem} can be seen as the main technical achievement of the paper. Our result will extend the known result for the modulation spaces on the Heisenberg group given in \cite[Theorem 7.6]{David}. We say that the parameters $p,q,s,\alpha$ with $1\leq p,q \leq \infty$, $s \in \mathbb{R}$, and $0 \leq \alpha \leq 1$ are \textit{non-trivial} if $(p,q,s) \neq (2,2,0)$. Question \hyperref[main_properties]{5)} above has the following complete answer.

\begin{theorem*}{(Main Theorem)}
\label{introduction_theorem_1}
Let $(\mathbb{R}^n,*_G)$ denote a rational stratified Lie group with step less than or equal two. Consider two sets of non-trivial parameters $1 \leq p_1,p_2,q_1,q_2 \leq \infty$, $s_1,s_2 \in \mathbb{R}$, and $0 \leq \alpha_1,\alpha_2 \leq 1$. We have equality \[M_{p_1,q_1}^{s_1,\alpha_1}(G) = M_{p_2,q_2}^{s_2,\alpha_2}(\mathbb{R}^n)\] with equivalent norms if and only if both \[(p_1,q_1,s_1,\alpha_1) = (p_2,q_2,s_2,\alpha_2) \quad \text{and} \quad (\mathbb{R}^n,*_G) \simeq (\mathbb{R}^n,+).\]
\end{theorem*}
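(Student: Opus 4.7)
The ``if'' direction is immediate: when $G$ is abelian one has $(\mathbb{R}^n,*_G)=(\mathbb{R}^n,+)$, and matching parameters makes both sides literally the same space by construction. The substantive content is the ``only if'' direction, and my plan is to establish it via the geometry of the underlying coverings $\mathcal{Q}^{\alpha_1}(G)$ and $\mathcal{Q}^{\alpha_2}(\mathbb{R}^n)$ together with the general decomposition-space machinery (as developed by Feichtinger--Gr\"obner and sharpened in Voigtlaender's work). The first step is to argue that a norm-equivalence $M_{p_1,q_1}^{s_1,\alpha_1}(G)=M_{p_2,q_2}^{s_2,\alpha_2}(\mathbb{R}^n)$ forces the two coverings to be \emph{weakly equivalent}, meaning each set in one covering is contained in a uniformly bounded union of sets from the other, with comparable weight sequences on overlaps. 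The hypothesis $(p,q,s)\neq (2,2,0)$ is precisely what excludes the degenerate case where every such space equals $L^2$ regardless of covering geometry.

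Once the coverings are weakly equivalent, I would extract $\alpha_1=\alpha_2$ and $p_1=p_2$, $q_1=q_2$, $s_1=s_2$ from the asymptotics of the coverings and their weights. A typical element of $\mathcal{Q}^{\alpha_2}(\mathbb{R}^n)$ at frequency $\xi$ has volume $\asymp (1+|\xi|)^{n\alpha_2}$, while a typical element of $\mathcal{Q}^{\alpha_1}(G)$ at frequency $\xi$ has volume $\asymp (1+\rho(\xi))^{Q\alpha_1}$, where $Q$ is the homogeneous dimension and $\rho$ a homogeneous norm on $G$. The covering equivalence forces these scalings to match, which, combined with the direction-sensitive comparison outlined below, pins down $\alpha_1=\alpha_2$. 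Fixing a single covering element and varying the Lebesgue indices then separates $p,q$ by standard embedding/non-embedding patterns for mixed Lebesgue spaces, and the weight comparison gives $s_1=s_2$ after accounting for the dilation behavior on the overlap regions.

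The hard step is deducing $(\mathbb{R}^n,*_G)\simeq(\mathbb{R}^n,+)$; this is where the stratification is genuinely used. The key observation is that $\mathcal{Q}^{\alpha_1}(G)$ is \emph{anisotropic}: because the second stratum $V_2$ carries homogeneous degree two, covering elements stretch twice as fast along $V_2$-directions as along $V_1$-directions, whereas $\mathcal{Q}^{\alpha_2}(\mathbb{R}^n)$ is isotropic in shape. I would fix a sequence $\xi_k\to\infty$ with $\xi_k\in V_2$ and separately track the diameter of the covering element containing $\xi_k$ in a $V_1$-direction versus in a $V_2$-direction. Weak equivalence requires both of these diameters to be comparable to the isotropic diameter $(1+|\xi_k|)^{\alpha_2}$ on the Euclidean side, which is impossible as $k\to\infty$ unless $V_2=\{0\}$, i.e., $n_2=\dim V_2=0$. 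For step-two stratified Lie groups this is equivalent to $(\mathbb{R}^n,*_G)\simeq(\mathbb{R}^n,+)$, completing the argument.

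The principal obstacle is that anisotropy is invisible at the level of volumes alone: both $n\alpha_2=Q\alpha_1$ and $2\alpha_1=\alpha_2$ are needed to force $V_2=\{0\}$, so one must carry shape information through the abstract decomposition-space comparison rather than merely size information. This requires the explicit realization of $\mathcal{Q}^{\alpha_1}(G)$ as images of a fixed model set under the graded dilations of $G$ (together with group translations in the $\alpha=0$ boundary), so that stretching in $V_2$-directions can be quantified. The rest of the argument, once the shape comparison is set up, should reduce to bookkeeping with weights and the standard classification already known for Euclidean $\alpha$-modulation spaces.
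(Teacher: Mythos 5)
Your overall architecture coincides with the paper's: the ``only if'' direction is reduced, via the decomposition-space rigidity machinery (Feichtinger--Gr\"obner and Voigtlaender's Theorem 6.9, which the paper invokes after checking that the relevant weights are moderate), to showing that equivalence of the coverings $\mathcal{Q}^{\alpha_1}(G)$ and $\mathcal{Q}^{\alpha_2}(\mathbb{R}^n)$ together with comparability of the weights forces $\alpha_1=\alpha_2$, $s_1=s_2$, and $(\mathbb{R}^n,*_G)\simeq(\mathbb{R}^n,+)$. Where you diverge is in the geometric step. The paper first splits off the boundary cases: for $\alpha=1$ it uses exponential-versus-polynomial size growth plus a quasi-isometry argument on the associated metric spaces; for $\alpha=0$ it invokes the result that $\mathcal{U}(G)$ and $\mathcal{U}(\mathbb{R}^n)$ can only be equivalent when the homogeneous dimension $Q$ equals $n$. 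In the intermediate case it obtains $\alpha_1=\alpha_2$ by restricting both coverings to the first stratum $V_1$ (where the homogeneous quasi-norm agrees with the Euclidean one, so the restriction is an honest Euclidean $\alpha_1$-covering) and then gets $Q=n$ from a volume comparison along $V_1$. Your directional-diameter (anisotropy) argument along $V_2$ is a legitimate alternative for the intermediate case, and in fact volumes alone already do more than you credit them for: evaluating the volume asymptotics along $V_1$ gives $Q\alpha_1=n\alpha_2$, while along $V_2$ (where $\rho(\xi)\asymp|\xi|^{1/2}$) it gives $Q\alpha_1=2n\alpha_2$, which together force $\alpha_1=\alpha_2=0$ whenever $V_2\neq\{0\}$.

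The genuine gap is the case $\alpha_1=\alpha_2=0$, which is exactly where your test sequence $\xi_k\in V_2$ degenerates. For $\alpha=0$ the covering elements of $\mathcal{U}(G)$ are homogeneous balls of \emph{fixed} radius, so the graded dilations contribute nothing: a unit homogeneous ball centered at a point of $V_2$ is left-translated by an element with trivial first-stratum component, hence is an ordinary Euclidean translate of $B^{\|\cdot\|}(0,1)$ and has bounded, isotropic Euclidean diameter. Along your chosen sequence the two coverings look indistinguishable. The non-abelian structure of $\mathcal{U}(G)$ is only detectable through centers with \emph{large first-stratum component}, where left-translation acts by the shear $\bigl(\begin{smallmatrix} I & 0\\ \rho(c) & I\end{smallmatrix}\bigr)$ and stretches the fixed-volume ball into a tube of Euclidean diameter $\asymp 1+|c|$, which then must meet unboundedly many unit cubes --- or, as the paper does it, through the polynomial growth rate of the associated metric space $(\mathbb{R}^n,d_{\mathcal{U}(G)})$, which is $R^Q$ rather than $R^n$. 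Your parenthetical ``together with group translations in the $\alpha=0$ boundary'' gestures at this but is not an argument, and since $\alpha=0$ is the modulation-space case (the one the paper singles out as requiring a separate metric-geometry treatment), it must be carried out explicitly. A similar, smaller omission: for the Besov boundary you need the preliminary observation that a covering with exponentially growing element sizes cannot be weakly equivalent to one with polynomially growing sizes, so that $\alpha_1=1$ forces $\alpha_2=1$ before any shape comparison is made.
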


Given two stratified Lie groups $G$ and $H$ with $\textrm{dim}(G) = \textrm{dim}(H)$, the spaces $M_{p_1,q_1}^{s_1,\alpha_1}(G)$ and $M_{p_2,q_2}^{s_2,\alpha_2}(H)$ will both consist of distributions on $\mathbb{R}^n$. Hence it makes sense to ask whether the inclusion $M_{p_1,q_1}^{s_1,\alpha_1}(G) \hookrightarrow M_{p_2,q_2}^{s_2,\alpha_2}(H)$ is bounded for certain parameters. However, when $\textrm{dim}(G) \neq \textrm{dim}(H)$ this approach is not possible as the distributions in each space are not comparable. As a substitute, we would like to understand when there exist embeddings \[F:M_{p_1,q_1}^{s_1,\alpha_1}(G) \longrightarrow M_{p_2,q_2}^{s_2,\alpha_2}(H)\] that preserve the underlying coverings $\mathcal{Q}^{\alpha}(G)$ and $\mathcal{Q}^{\alpha}(H)$ in a suitable sense. These embeddings have recently been invented in \cite{Berge} under the name \textit{geometric embeddings}. We will give the precise definitions in Section \ref{sec: Metric_Geometry_G_alpha_Modulation_Spaces}. The existence of geometric embeddings is at the moment to challenging to answer in its full generality. In Theorem \ref{new_geometric_embedding_result} we give a partial answer to when the Euclidean modulation spaces $M_{p,q}^{s,0}(\mathbb{R}^k)$ can embed geometrically into the generalized modulation spaces $M_{p,q}^{s,0}(G)$. 

\begin{theorem*}
\label{introduction_theorem_2}
Let $G$ be a rational stratified Lie group with step less than or equal two and with rank $k$. There exists a geometric embedding \[F: M_{p,q}^{s,0}(\mathbb{R}^{k'}) \xrightarrow{ } M_{p,q}^{s,0}(G)\] for every $k' \leq k$, $1 \leq p,q < \infty$ and $s \in \mathbb{R}$. This is in general optimal as there are no geometric embeddings from $M_{p,q}^{s,0}(\mathbb{R}^{k'})$ to $M_{p,q}^{s,0}(\mathbb{R}^{l})$ for $l < k'$.
\end{theorem*}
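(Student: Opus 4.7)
The plan is to realize the geometric embedding explicitly by tensoring with a fixed Schwartz bump function adapted to the stratification, and to derive optimality from a coarse-geometric invariant (polynomial growth) of the indexing sets of the coverings.

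Exponential coordinates identify $G$ with $\mathbb{R}^n = \mathbb{R}^k \oplus \mathbb{R}^{n-k}$ in accordance with the stratification $\mathfrak{g} = V_1 \oplus V_2$. Since $G$ has step two, the group law is polynomial of degree at most two, and the lattice $N \subset G$ afforded by rationality may be chosen, via a Malcev basis, so that its projection to $V_1 \cong \mathbb{R}^k$ is a full-rank lattice. For $k' \leq k$, fix a Schwartz function $\phi \in \mathcal{S}(\mathbb{R}^{n-k'})$ with compactly supported Fourier transform, and define the candidate embedding
\[
F : \mathcal{S}'(\mathbb{R}^{k'}) \longrightarrow \mathcal{S}'(\mathbb{R}^n), \qquad F(f)(x,y) = f(x)\,\phi(y),
\]
where $x \in \mathbb{R}^{k'}$ lies in the chosen $\mathbb{R}^{k'} \subseteq V_1$ and $y \in \mathbb{R}^{n-k'}$ spans the complementary coordinates.

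I would then show that $F$ is bounded from $M_{p,q}^{s,0}(\mathbb{R}^{k'})$ into $M_{p,q}^{s,0}(G)$ by decomposing $f$ with respect to the uniform covering $\mathcal{U}(\mathbb{R}^{k'})$, tensoring each local piece with $\phi$, and observing that the resulting frequency packet is concentrated in only a uniformly bounded number of boxes of $\mathcal{Q}(G)$. This step is where the step-two hypothesis enters essentially: the Baker--Campbell--Hausdorff twist of $\mathcal{Q}(G)$ is polynomial of degree at most two, so on the compact frequency support coming from $\phi$ it displaces packets by only a controlled amount.

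To upgrade this bounded map into a geometric embedding in the sense of Section \ref{sec: Metric_Geometry_G_alpha_Modulation_Spaces}, I would pair $F$ with a coarse embedding of the indexing set of $\mathcal{U}(\mathbb{R}^{k'})$ into that of $\mathcal{Q}(G)$; the natural choice sends $j \in \mathbb{Z}^{k'}$ to a fixed lift of $j$ in $N$ through the projection $N \to V_1 \to \mathbb{R}^{k'}$. The main obstacle that I anticipate is the precise covering-compatibility axiom of a geometric embedding: verifying that neighborhoods in the nerve of $\mathcal{U}(\mathbb{R}^{k'})$ map into neighborhoods of uniformly bounded size in the nerve of $\mathcal{Q}(G)$ demands a careful matching of the two coverings through the BCH polynomials, and it is here that the restriction to step two rather than just controlling dilation weights becomes decisive.

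For the optimality statement, any geometric embedding $M_{p,q}^{s,0}(\mathbb{R}^{k'}) \hookrightarrow M_{p,q}^{s,0}(\mathbb{R}^l)$ would in particular induce a coarse embedding of $\mathbb{Z}^{k'}$ into $\mathbb{Z}^l$ equipped with the $\ell^\infty$ metric. A ball of radius $R$ contains on the order of $R^{k'}$ and $R^l$ points respectively, and polynomial growth is a coarse invariant, so no such coarse embedding can exist for $l < k'$, forcing the same constraint on the geometric embedding itself.
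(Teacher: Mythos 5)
Your map $F(f)=f\otimes\phi$ with $\widehat{\phi}$ compactly supported is exactly the paper's construction (the paper writes the same map on the Fourier side as $\mathcal{F}_n^{-1}\left(\mathcal{F}_k(f)\cdot\xi\otimes\cdots\otimes\xi\right)$), and your growth-counting argument for optimality is a clean, self-contained substitute for the paper's appeal to Proposition \ref{modulation_space_result}. The gap is that the two load-bearing verifications are only announced, and the justification you offer for the first one does not hold up. You claim that the frequency packet $\operatorname{supp}\widehat{f}\times(-1/2,1/2)^{n-k}$ meets only a uniformly bounded number of sets of $\mathcal{U}(G;N)$ because the BCH twist is quadratic and hence ``displaces packets by only a controlled amount'' on the compact support of $\widehat{\phi}$. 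But the sets of $\mathcal{U}(G;N)$ are the left translates $m*_G B^{\|\cdot\|}(0,R)$, and for $m=(\overline{m},m')\in V_1\oplus V_2$ left translation shears the second-layer coordinates by $\tfrac{1}{2}[\overline{m},\cdot\,]$, an amount that grows linearly in $\|\overline{m}\|$; the compactness of $\operatorname{supp}\widehat{\phi}$ controls the second-layer frequency variable but not the size of this shear, which depends on the unbounded first-layer position. A direct count in, say, $\mathbb{H}_1$ shows that over a unit first-layer window at $\overline{m}$ the slab $\mathbb{R}^k\times(-1/2,1/2)^{n-k}$ meets on the order of $\left(\|\overline{m}\|R\right)^{\dim V_2}$ translates, not $O(1)$. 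So the ``bounded number of boxes'' step fails as stated. The paper circumvents this by choosing a BAPU of product form $\psi_m=\phi_{\overline{m}}\otimes\psi'_{m'}$, so that $\psi_m\cdot\widehat{F(f)}$ vanishes unless $m'=0$; any proof along your lines needs that input (or a genuine substitute), and it does not follow from the step-two hypothesis in the way you suggest --- indeed justifying such a product-structured BAPU subordinate to the sheared covering is itself delicate for exactly the reason above.

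Second, you explicitly defer the verification of the covering-compatibility condition \eqref{geometric_condition} (``the main obstacle that I anticipate''), which is precisely what makes the embedding \emph{geometric} rather than merely bounded. The paper discharges it by asserting that the induced map $F_*$ is the inclusion of the first $k$ coordinates and that this is a quasi-isometric embedding of $\left(\mathbb{R}^k,d_{\mathcal{U}(\mathbb{R}^k)}\right)$ into $\left(\mathbb{R}^n,d_{\mathcal{U}(G)}\right)$; note that this also requires an argument, since $d_{\mathcal{U}(G)}$ between two first-layer points $x,y$ is governed by $\|x^{-1}*_G y\|$, which picks up the second-layer term $-\tfrac{1}{2}[x,y]$ and is not comparable to $\|y-x\|_E$ in an obvious way. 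As it stands, your proposal identifies the correct map and correctly locates the two obstacles, but proves neither; and the heuristic you give for the first would, if taken literally, lead to a false intermediate claim.
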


The structure of the paper is as follows: In Section \ref{sec: Preliminaries} we introduce stratified Lie groups, admissible coverings, and related notions. We will also define the traditional $\alpha$-modulation spaces $M_{p,q}^{s,\alpha}(\mathbb{R}^n)$ in Section \ref{sec: alpha_Modulation_Spaces _on _Euclidean _Spaces} to make the exposition more self-contained. The coverings $\mathcal{Q}^{\alpha}(G)$ associated to the group $G$ are defined in Section \ref{sec: G_alpha_coverings} and we develop some of their basic properties. In Subsection \ref{sec: Almost_Structured_Coverings} we discuss when the elements in the covering $\mathcal{Q}^{\alpha}(G)$ are images of a few reference sets under well-behaved affine transformations. As one might expect, this depends on how \textquote{polynomial} the group multiplication on $(\mathbb{R}^{n},*_G)$ is. In Section \ref{sec: G_alpha_modulation_spaces} we define the spaces $M_{p,q}^{s,\alpha}(G)$ and investigate their duality relations. We moreover show that the rapidly decaying smooth functions $\mathcal{S}(\mathbb{R}^n)$ are contained in $M_{p,q}^{s,\alpha}(G)$. It is in Section \ref{sec: Novelty} that we answer the fifth question regarding uniqueness of the spaces $M_{p,q}^{s,\alpha}(G)$ and develop a few auxiliary results. We will study geometric embeddings in Section \ref{sec: Metric_Geometry_G_alpha_Modulation_Spaces}. Finally, in Section \ref{sec: Looking_Back_and_Ahead} we look back on the five questions posted in the introduction and outline some open problems and possible future directions.

\subsection*{Acknowledgements}
The author is grateful for the insightful remarks provided by Hans Feichtinger, Franz Luef, and Felix Voigtlaender. Much of the paper was written when the author visited David Rottensteiner in Vienna, supported by the BFS/TFS project Pure Mathematics in Norway.

\section{Preliminaries}
\label{sec: Preliminaries}

Our notational conventions are fairly standard: We use the convention that $\mathbb{N}$ does not contain zero and we will write $\mathbb{N}_{0} := \mathbb{N} \cup \{0\}$. The Lebesgue measure of a measurable set $A \subset \mathbb{R}^n$ will be denoted by $|A|$, while the number of elements in a finite or countably infinite set $B$ will be denoted by $\#B$. The \textit{Fourier transform} on $\mathbb{R}^{n}$ will be denoted by $\mathcal{F}$ and we use the normalization convention \[\mathcal{F}(f)(\omega) = \int_{\mathbb{R}^n}f(x) \cdot e^{-2\pi i x \cdot \omega} \, dx.\] We will denote by $\mathcal{S}(\mathbb{R}^n)$ the space of smooth functions on $\mathbb{R}^n$ with rapid decay. Its topological dual space $\mathcal{S}'(\mathbb{R}^n)$ will be referred to as the \textit{tempered distributions}. Denote by $L^{p} := L^{p}(\mathbb{R}^n)$ the $p$'th integrable Lebesgue measurable functions for $1 \leq p \leq \infty$ with the usual modification for $p = \infty$. The space $l^{q}(I)$ where $I$ is a countable index set will denote the $q$'th summable sequences indexed by $I$ where $1 \leq q < \infty$. Similarly, the space $l^{\infty}(I)$ denotes all bounded sequences on the index set $I$. When the index set $I$ is clear from the context we will often simply write $l^{q} := l^{q}(I)$ for $1 \leq q \leq \infty$. We will use the notation  $\|\cdot\|_{E}$ for the usual Euclidean norm on $\mathbb{R}^n$ and reserve the notation $\|\cdot\|$ for the homogeneous quasi-norms on stratified Lie groups introduced in Subsection \ref{sec: Stratified_Lie_Groups}.

\subsection{Stratified Lie Groups}
\label{sec: Stratified_Lie_Groups}

In this section we briefly outline the essence of stratified Lie groups and the basic constructions on them we will need in subsequent chapters. As our intended audience include people with a background in harmonic analysis and time-frequency analysis, we have tried to keep the prerequisites at a minimum. Any statement that is not justified in this section can be found in \cite[Chapter 1.6 and 3.1]{Quantization_on_Nilpotent_Lie_Groups}.

\begin{definition}
Let $G$ be a connected and simply connected Lie group with Lie algebra $\mathfrak{g}$. Then $G$ is called \textit{stratified} if there exists a \textit{stratification} 
\begin{equation}
\label{Lie_algebra_decomposition}
    \mathfrak{g} = V_1 \oplus \dots \oplus V_s, \quad [V_1,V_j] = \begin{cases}
V_{j+1}, \, \textrm{ if } \, j = 1, \dots, s-1 \\
\{0\}, \, \, \, \, \textrm{ if } \, j = s
\end{cases}.
\end{equation}
\end{definition}
The number $s$ is called the \textit{step} of $G$ while the number $k := \textrm{dim}(V_1)$ is called the \textit{rank} of $G$. Both numbers are invariant under different choices of stratifications. Elements in $V_i$ are said to be of \textit{degree} $i$ for $i = 1,\dots,s$ and we use the notation $\textrm{deg}(X) = i$ for $X \in V_i$. It is clear that any stratified Lie group $G$ is \textit{nilpotent}, that is, the \textit{adjoint map} $\textrm{ad}_{X}:\mathfrak{g} \to \mathfrak{g}$ given by $\textrm{ad}_{X}(Y) = [X,Y]$ is a nilpotent linear map for all $X \in \mathfrak{g}$. \par
For stratified Lie groups the exponential map $\exp_{G}:\mathfrak{g} \to G$ is a global diffeomorphism and we denote its inverse by $\log_{G}:G \to \mathfrak{g}$. The \textit{Baker-Campbell-Hausdorff formula} (BCH) gives the expression 
\begin{equation}
\label{BCH-formula}
    \log_{G}\left(\exp_{G}(X)*_{G}\exp_{G}(Y)\right) = X + Y + \frac{1}{2}[X,Y] - \frac{1}{12}[Y,[X,Y]]+ \cdots,
\end{equation} where there are only finitely many terms due to the nilpotency and they all involve iterated brackets between $X$ and $Y$. \par
An important feature of stratified Lie groups is that they admit dilations: Define the maps $D_r:\mathfrak{g} \to \mathfrak{g}$ for $r > 0$ by \[D_{r}(X) = r^{\textrm{deg}(X)}X, \quad X \in \mathfrak{g}.\] It is straightforward to see that the maps $D_r$ are all Lie algebra isomorphisms. Since $G$ is the connected and simply connected Lie group of $\mathfrak{g}$, there exist unique Lie group automorphisms $D_r^{G}:G \to G$ lifting the maps $D_r$ for all $r > 0$. We call the maps $D_{r}^{G}:G \to G$ for $r > 0$ \textit{dilations} on the Lie group $G$ and they are explicitly given by \[D_{r}^{G}(g) = \exp_{G} \circ D_r \circ \log_{G}(g), \quad g \in G.\] \par
Any stratified Lie group $G$ is \textit{unimodular}, that is, the right and left Haar measures coincide. Let $\mu$ denote a choice of Haar measure on $G$. Then \begin{equation}
\label{Haar_measure_on_G}
    \mu(A) = \lambda(\log_{G}(A)),
\end{equation}
where $\lambda$ is a corresponding choice of Lebesgue measure on the vector space $\mathfrak{g}$ and $A \subset G$ is a Borel measurable set. Hence $\mu(D_{r}^{G}(A)) = r^{Q}\mu(A)$, where \[Q := \sum_{j = 1}^{s}j \cdot \textrm{dim}(V_j).\] The number $Q$ satisfies $\textrm{dim}(G) \leq Q$ and is called the \textit{homogeneous dimension} of the stratified Lie group $G$. \par
Recall that a \textit{lattice} $N$ in a Lie group $G$ is a discrete subgroup such that there exists a $G$-invariant Borel measure $\mu_{G/N}$ on the quotient $G/N$ with $\mu_{G/N}(G/N) < \infty$. Lattices in stratified Lie groups enjoy two properties that are not shared by lattices in general Lie groups (or in general locally compact groups): 
\begin{itemize}
    \item Any lattice $N$ in a stratified Lie group $G$ is \textit{uniform}, that is, the quotient space $G/N$ is compact. In fact, the compactness of $G/N$ for a discrete subgroup $N$ is equivalent to the existence of a $G$-invariant Borel measure $\mu_{G/N}$ on the quotient $G/N$ with $\mu_{G/N}(G/N) < \infty$ \cite[Theorem 2.1]{Discrete_Subgroups}. 
    \item Any lattice in a stratified Lie group is a finitely generated nilpotent group \cite[Theorem 2.10]{Discrete_Subgroups}. 
\end{itemize}
Moreover, a stratified Lie group $G$ admits a lattice if and only if there exists a basis $X_1, \dots, X_n$ for its Lie algebra $\mathfrak{g}$ such that the \textit{structure constants} $c_{ij}^{k}$ defined by the relation \[[X_i,X_j] = \sum_{k = 1}^{n}c_{ij}^{k}X_k, \quad i,j = 1, \dots, n,\] are all rational numbers \cite[Theorem 2.12]{Discrete_Subgroups}.  Such stratified Lie groups are called \textit{realizable over the rationals} or simply \textit{rational}. The classification of nilpotent Lie algebras in \cite{Lie_Algebras_Low_Dim} shows that every stratified Lie group of dimension less than seven is rational.  We will mostly be interested in stratified Lie groups $G$ that are rational and many results (such as Proposition \ref{explicit_alpha_covering_prop}, Theorem \ref{novelty_theorem}, and Theorem \ref{new_geometric_embedding_result}) require this. \par 
We can identify $G$ as a manifold with $\mathbb{R}^n$ for $n = \textrm{dim}(G)$ through the exponential map. The group operation $*_G$ on $\mathbb{R}^n$ such that $G$ is isomorphic to $(\mathbb{R}^n,*_G)$ as a Lie group is polynomial by the BCH formula \eqref{BCH-formula}. Then relation \eqref{Haar_measure_on_G} shows that the Haar measure $\mu$ on $G$ transported to $\mathbb{R}^n$ through the exponential map is simply the Lebesgue measure $\lambda$ on $\mathbb{R}^n$. However, lattices $N$ in $G$ are not in general identified with the standard lattices in $\mathbb{R}^n$, that is, the subgroups $\Gamma \subset \mathbb{R}^n$ on the form $\Gamma = A\mathbb{Z}^n$, where $A \in GL(n,\mathbb{R})$. Our motivation for identifying stratified Lie groups with $\mathbb{R}^n$ comes from the need to use the Euclidean Fourier transform when we define the generalized $\alpha$-modulation spaces $M_{p,q}^{s,\alpha}(G)$ in Section \ref{sec: G_alpha_modulation_spaces}. \par 
Let us now describe an alternative to the usual Euclidean norm $\|\cdot\|_{E}$ on $\mathbb{R}^n$ that is adapted to the stratified Lie group $G$: We say that a function $f:G \to \mathbb{C}$ is $l$-\textit{homogeneous} for $l \in \mathbb{N}_0$ if \[f\left(D_{r}^{G}(g)\right) = r^{l}f(g),\] for every $r > 0$ and all $g \in G$. The function $f:G \to \mathbb{C}$ is called \textit{symmetric} if $f(g) = f\left(g^{-1}\right)$ for every $g \in G$.
\begin{definition}
A \textit{homogeneous quasi-norm} on a stratified Lie group $G$ is a 1-homogeneous continuous function that is symmetric and has the property that $\|g\| = 0$ only holds when $g$ is the identity element of $G$.
\end{definition}
We will use the standard notation \[B^{\|\cdot\|}(g,R) := \left\{h \in G \, \Big| \, \|g^{-1} *_{G} h\| < R \right\}, \qquad g \in G, \quad R > 0.\] The following proposition is proved in \cite[Proposition 3.1.35]{Quantization_on_Nilpotent_Lie_Groups} and shows that the choice of homogeneous quasi-norm is in many instances irrelevant. 
\begin{lemma}
\label{homogeneous_quasi_norm_result}
Let $G$ be a stratified Lie group. Then $G$ admits a homogeneous quasi-norm that is smooth away from the identity element. Moreover, any two homogeneous quasi-norms $\|\cdot\|_{1}$ and $\|\cdot\|_{2}$ on $G$ are equivalent in the sense that there exists $C > 0$ such that \[\frac{1}{C}\|g\|_{1} \leq \|g\|_{2} \leq C \|g\|_{1},\] for every $g \in G$.
\end{lemma}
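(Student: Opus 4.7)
The plan is to establish existence by writing down an explicit quasi-norm built from the stratification, and then to handle equivalence via a compactness argument on a single unit sphere.

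\textbf{Existence.} Fix an inner product on $\mathfrak{g}$ that makes the decomposition $\mathfrak{g}=V_1\oplus\cdots\oplus V_s$ orthogonal, and let $|\cdot|_j$ denote the induced Euclidean norm on $V_j$. For $g\in G$ write $\log_G(g)=X_1+\cdots+X_s$ with $X_j\in V_j$ and define
\[
\|g\|:=\Bigl(\sum_{j=1}^{s}|X_j|_j^{\,2s!/j}\Bigr)^{1/(2s!)}.
\]
I would check the four required properties in turn. For $1$-homogeneity, note that $D_r$ scales $X_j$ by $r^{j}$, so $|X_j|_j^{2s!/j}$ scales by $r^{2s!}$; taking the $2s!$-th root gives the factor $r$. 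Symmetry follows because in a nilpotent (hence stratified) Lie group the inverse satisfies $\log_G(g^{-1})=-\log_G(g)$, and $|-X_j|_j=|X_j|_j$. That $\|g\|=0$ forces $g=e$ is clear, as each summand must vanish and $\exp_G$ is a diffeomorphism. For smoothness away from the identity the key observation is that $s!/j$ is a positive integer whenever $1\le j\le s$, so $2s!/j$ is an even integer and $|X_j|_j^{2s!/j}=\langle X_j,X_j\rangle_j^{s!/j}$ is a polynomial in the coordinates of $X_j$; hence the sum under the $2s!$-th root is a smooth nonnegative function that is strictly positive off the identity, and the root is smooth there.

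\textbf{Equivalence.} Let $\|\cdot\|_1$ and $\|\cdot\|_2$ be two homogeneous quasi-norms, both continuous by assumption, and consider the set $S_1:=\{g\in G:\|g\|_1=1\}$. The strategy is to show $S_1$ is compact and then exploit continuity of $\|\cdot\|_2$. Transporting via $\log_G$, I identify $S_1$ with the zero set of the continuous map $X\mapsto \|\exp_G(X)\|_1-1$ on $\mathfrak{g}\setminus\{0\}$. Using $1$-homogeneity under $D_r$, for every $X\in\mathfrak{g}\setminus\{0\}$ there is a unique $r(X)>0$ with $\|\exp_G(D_{r(X)}(X))\|_1=1$; by the intermediate value theorem and strict monotonicity of $r\mapsto\|\exp_G(D_r(X))\|_1$ (which follows from homogeneity), $r$ is continuous, so radial projection from the Euclidean unit sphere $S_E\subset\mathfrak{g}$ onto $S_1$ (via $X\mapsto\exp_G(D_{r(X)}(X))$) is a homeomorphism. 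Since $S_E$ is compact, so is $S_1$. Then $\|\cdot\|_2$ attains a positive minimum $m$ and a finite maximum $M$ on $S_1$. Given arbitrary $g\ne e$, set $r=\|g\|_1$ and write $g=D_r^G(h)$ for a unique $h\in S_1$; by $1$-homogeneity of both quasi-norms, $\|g\|_2=r\|h\|_2\in[mr,Mr]=[m\|g\|_1,M\|g\|_1]$, which yields the claimed two-sided bound with $C=\max(M,1/m)$.

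\textbf{Main obstacle.} The only genuinely delicate point is compactness of $S_1$; without it one cannot extract uniform constants. The argument hinges on the fact that dilations act transitively on rays and that $r\mapsto\|\exp_G(D_r(X))\|_1$ is strictly monotone — a property that is not built into the definition of a quasi-norm but is forced by continuity together with $1$-homogeneity and the vanishing condition. Everything else (1-homogeneity, smoothness, symmetry) is a routine verification once the exponent $2s!/j$ has been chosen to be a common even integer across all strata.
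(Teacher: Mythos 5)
Your proof is correct. The paper does not prove this lemma itself but defers to \cite[Proposition 3.1.35]{Quantization_on_Nilpotent_Lie_Groups}, and your argument --- the explicit quasi-norm $\bigl(\sum_{j}|X_j|_j^{2s!/j}\bigr)^{1/(2s!)}$ (smooth off the identity because each exponent $2s!/j$ is an even integer) together with compactness of the quasi-norm unit sphere obtained by radial projection along dilation orbits from the Euclidean sphere --- is essentially the standard proof given in that reference, with all the delicate points (strict monotonicity of $r\mapsto\|\exp_G(D_r(X))\|_1$, which is in fact the linear function $r\,\|\exp_G(X)\|_1$, and positivity of the minimum of $\|\cdot\|_2$ on the compact sphere) handled correctly.
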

The terminology \textquote{quasi-norm} is justified by \cite[Proposition 3.1.38]{Quantization_on_Nilpotent_Lie_Groups}, showing that homogeneous quasi-norms satisfy 
\begin{equation}
\label{quasi_norm_constant}
    \|g *_{G} h\| \leq C\left(\|g\| + \|h\|\right), \quad g,h \in G,
\end{equation}
where $C \geq 1$ is a constant that does not depend on the elements $g,h \in G$. In fact, it is always possible by \cite[Proposition 3.1.39]{Quantization_on_Nilpotent_Lie_Groups} to find a \textit{homogeneous norm}, that is, a homogeneous quasi-norm $\|\cdot\|$ that additionally satisfies \[\|g *_{G} h\| \leq \|g\| + \|h\|, \quad g,h \in G.\] \par
When considering stratified Lie groups in the rest of this paper, we implicitly assume the following standing assumption: We always chose the realization of $G$ as $(\mathbb{R}^n,*_G)$ where $n = \textrm{dim}(G)$ through the exponential map. The triple $(\mathbb{R}^n, *_G, \|\cdot\|)$ will for the rest of the paper denote the realization of $G$ where $\|\cdot\|$ is a choice of a homogeneous quasi-norm on $(\mathbb{R}^n, *_{G})$.

\begin{example}
\label{Heisenberg_example}
Consider the \textit{Heisenberg Lie algebra} $\mathfrak{h}_{n} = \textrm{span}_{\mathbb{R}}\{X_1, \dots, X_n, Y_1, \dots, Y_n, Z\}$ with non-trivial bracket relations \[[X_i,Y_i] = Z, \quad i = 1, \dots, n.\] The connected and simply connected Lie group $\mathbb{H}_{n}$ corresponding to $\mathfrak{h}_{n}$ is called the \textit{Heisenberg group}. It follows from the BCH formula \eqref{BCH-formula} that \[\log_{\mathbb{H}_{n}}\left(\exp_{\mathbb{H}_{n}}(X) *_{\mathbb{H}_{n}} \exp_{\mathbb{H}_{n}}(Y)\right) = X + Y + \frac{1}{2}[X,Y], \quad X,Y \in \mathfrak{h}_n.\] Through the exponential map, the Heisenberg group $\mathbb{H}_{n}$ is isomorphic as a Lie group to $(\mathbb{R}^{2n+1},*_{\mathbb{H}_n})$ where \[(x,\omega,t) *_{\mathbb{H}_{n}} (x',\omega',t) := \left(x + x', \, \omega + \omega', \, t + t' + \frac{1}{2}(x'\omega - x \omega')\right),\] for $x,x',\omega,\omega' \in \mathbb{R}^n$ and $t,t' \in \mathbb{R}$. After this identification, the dilations $D_{r}^{\mathbb{H}_n}$ for $r > 0$ are given by \[D_{r}^{\mathbb{H}_{n}}(x,\omega,t) = (rx,r\omega,r^{2}t), \quad (x,\omega,t) \in \mathbb{R}^{2n+1}.\] The homogeneous dimension of $\mathbb{H}_{n}$ is $Q = 2n + 2$ and a concrete example of a lattice in $(\mathbb{R}^{2n+1},*_{\mathbb{H}_n})$ is \[N := \left\{(x,\omega,t) \in \mathbb{R}^{2n+1} \, \Big| \, x,\omega \in 2\mathbb{Z}^n, \, t \in \mathbb{Z}\right\}.\] Moreover, the \textit{homogeneous Cygan-Koranyi norm}  \begin{equation}
\label{homogeneous_Cygan-Koranyi_norm}
    (x,\omega,t) \longmapsto \left((|x|^2 + |\omega|^2)^2 + 16t^2\right)^{\frac{1}{4}}
\end{equation} is an example of a homogeneous quasi-norm on the Heisenberg group. 
\end{example}

\subsection{Admissible Coverings}
\label{sec: Admissible_coverings}

We give a brief review of admissible coverings and some related notions that we need in subsequent sections. Admissible coverings was originally formulated in \cite{Hans_Grobner} as special coverings on an arbitrary set. However, we will restrict ourselves to admissible coverings on $\mathbb{R}^n$ since every stratified Lie group $G$ has a realization as $(\mathbb{R}^n,*_{G})$ as explained in the previous section.

\begin{definition}
A covering $\mathcal{Q} = (Q_i)_{i \in I}$ consisting of non-empty sets on $\mathbb{R}^n$ is called \textit{admissible} if we have the uniform bound \begin{equation}
\label{admissibility_constant}
    \sup_{i \in I}\#\left\{j \in I \, \Big| \, Q_i \cap Q_j \neq \emptyset \right\} \leq N_{\mathcal{Q}}, 
\end{equation}
for some $N_{\mathcal{Q}} \in \mathbb{N}$.
The admissible covering $\mathcal{Q}$ will be called a \textit{concatenation} if we additionally have the equality \begin{equation}
\label{concatenation_property}
    \mathbb{R}^n = \bigcup_{k = 1}^{\infty}Q_{i}^{k*},
\end{equation}
for some (and hence all) $i \in I$, where we use the notation \[Q_{i}^{*}:= \left\{Q_j \in \mathcal{Q} \, \Big| \, Q_i \cap Q_j \neq \emptyset \right\}, \qquad Q_{i}^{k*} := \left(Q_{i}^{(k-1)*}\right)^{*},\] for $k \geq 2$ and $i \in I$.
\end{definition}

Given an admissible covering $\mathcal{Q} = (Q_i)_{i \in I}$ we call the elements in $Q_i^{*}$ the \textit{neighbours} of the set $Q_i \in \mathcal{Q}$. Moreover, the smallest possible constant $N_{\mathcal{Q}}$ in \eqref{admissibility_constant} is called the \textit{admissibility constant} of the admissible covering $\mathcal{Q}$. The admissibility condition \eqref{admissibility_constant} is needed to obtain non-trivial classes of functions that have a prescribed frequency decay with respect to the covering $\mathcal{Q}$. On the other hand, the concatenation property \eqref{concatenation_property} will be necessary when we examine coverings from a metric space viewpoint in Section \ref{sec: Novelty} and Section \ref{sec: Metric_Geometry_G_alpha_Modulation_Spaces}. \par 
We will in Section \ref{sec: Novelty} need the notion of weight functions that are well-behaved with respect to an admissible covering $\mathcal{Q} = (Q_i)_{i \in I}$ on $\mathbb{R}^n$. To be precise, we will call a function $\omega:I \to (0,\infty)$ $\mathcal{Q}$\textit{-moderate} if we have the uniform bound \[\sup_{\{j: Q_i \cap Q_j \neq \emptyset \}}\frac{\omega(i)}{\omega(j)} \leq \mathcal{C}_{\omega},\] where the constant $\mathcal{C}_{\omega}$ does not depend on the index $i \in I$.\par

Given two admissible coverings $\mathcal{Q} = (Q_i)_{i \in I}$ and $\mathcal{P} = (P_{j})_{j \in J}$ on $\mathbb{R}^n$, there are two common ways of comparing them:

\begin{itemize}
    \item We say that $\mathcal{Q}$ is \textit{almost subordinate} to $\mathcal{P}$ if there exists a $k \in \mathbb{N}$ such that for every $i \in I$ there is a $j \in J$ with $Q_{i} \subset P_{j}^{k*}$. We use the notation $\mathcal{Q} \leq \mathcal{P}$ and say that the coverings $\mathcal{Q}$ and $\mathcal{P}$ are \textit{equivalent} if both $\mathcal{Q} \leq \mathcal{P}$ and $\mathcal{P} \leq \mathcal{Q}$ are satisfied.
    \item We say that $\mathcal{Q}$ is \textit{weakly subordinate} to $\mathcal{P}$ if we have the bound \[\sup_{i \in I}\#\left\{j \in J \, \Big| \, P_j \cap Q_i \neq \emptyset\right\} < \infty.\] If $\mathcal{Q}$ is weakly subordinate to $\mathcal{P}$ and vice versa, we call the coverings \textit{weakly equivalent}.
\end{itemize}

It follows from \cite[Proposition 3.5]{Hans_Grobner} that almost subordination implies weak subordination, although the converse is not true in general. It is generally difficult to show that one covering $\mathcal{Q}$ is almost subordinate to another covering $\mathcal{P}$. However, it is often easier to show that $\mathcal{Q}$ is weakly subordinate to $\mathcal{P}$. Whenever the coverings consist of open and path-connected sets, then it follows from \cite[Proposition 3.6]{Hans_Grobner} that the two notions coincide.

An arbitrary admissible covering $\mathcal{Q} = (Q_{i})_{i \in I}$ on $\mathbb{R}^{n}$ can have (at least) two annoying features: Firstly, the index set $I$ might not be countable. Secondly, subsets $Q_i \in \mathcal{Q}$ are allowed to be repeated several times in the collection $\mathcal{Q} = (Q_i)_{i \in I}$, only with different indices. The covering $\mathcal{Q}$ on $\mathbb{R}$ whose index set is $I = \mathbb{R} \times \{0,1\}$ and $Q_{(r,0)} = Q_{(r,1)} = \{r\}$ for $r \in \mathbb{R}$ is a simple admissible covering that embodies both problems simultaneously. Moreover, the covering $\mathcal{Q}$ is clearly not a concatenation as it is a partition. The following lemma shows that these problems disappear once we require the elements in the covering to be open sets.

\begin{lemma}
\label{lemma_about_basic_properties_of_coverings}
Let $\mathcal{Q} = (Q_i)_{i \in I}$ be an admissible covering on $\mathbb{R}^n$ consisting of open sets. Then $I$ has to be countable and the covering $\mathcal{Q}$ is automatically a concatenation. Moreover, we can always remove the repeated elements in $\mathcal{Q} = (Q_i)_{i \in I}$ and obtain an equivalent covering. 
\end{lemma}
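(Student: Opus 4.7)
The plan is to attack the three claims in sequence, using that $\mathbb{R}^n$ is second countable and connected, together with the openness of the $Q_i$'s.

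For countability, I would first use second countability (equivalently, the Lindel\"of property) of $\mathbb{R}^n$: the open cover $\mathcal{Q}$ admits a countable subcover $(Q_{i_k})_{k \in \mathbb{N}}$. Now pick any index $i \in I$. Since $Q_i$ is non-empty and open, it contains a point $x$, and $x$ lies in some $Q_{i_k}$, whence $Q_i \cap Q_{i_k} \neq \emptyset$. Thus every index in $I$ is a neighbour of some $i_k$. By the admissibility bound \eqref{admissibility_constant}, each $i_k$ has at most $N_{\mathcal{Q}}$ neighbours, so $I$ is a countable union of finite sets and hence countable.

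For the concatenation property, I would fix $i_0 \in I$, write $U_k := \bigcup\{Q_j : Q_j \in Q_{i_0}^{k\ast}\}$ and $U := \bigcup_{k = 1}^{\infty} U_k$, and show $U = \mathbb{R}^n$ by a clopen argument against the connectedness of $\mathbb{R}^n$. The set $U$ is non-empty (it contains $Q_{i_0}$) and open as a union of open sets. The key step is closedness: given $x \in \overline{U}$, use that $\mathcal{Q}$ is a cover to pick $Q_j \ni x$; by openness of $Q_j$ and $x \in \overline{U}$, there exists $y \in Q_j \cap U$, so $y \in Q_{j'}$ for some $Q_{j'} \in Q_{i_0}^{k\ast}$ and some $k$. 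Then $Q_j \cap Q_{j'} \neq \emptyset$, so $Q_j \in Q_{i_0}^{(k+1)\ast}$ by the iterative definition, hence $x \in Q_j \subset U_{k+1} \subset U$. Connectedness of $\mathbb{R}^n$ forces $U = \mathbb{R}^n$, which is the desired concatenation identity \eqref{concatenation_property}. I expect this closedness argument to be the main subtle point, since it crucially exploits that the sets are open, which is exactly why the example on $\mathbb{R}$ with singleton sets preceding the lemma fails.

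Finally, to eliminate repetitions, I would introduce the equivalence relation $i \sim j \iff Q_i = Q_j$ on $I$. By admissibility \eqref{admissibility_constant}, every $\sim$-class has at most $N_{\mathcal{Q}}$ elements (since repeated copies are mutually intersecting). Choose one representative from each class to form a sub-index set $I' \subset I$, and set $\mathcal{Q}' := (Q_i)_{i \in I'}$. Admissibility of $\mathcal{Q}'$ is inherited from $\mathcal{Q}$ with the same constant $N_{\mathcal{Q}}$, and $\mathcal{Q}' \leq \mathcal{Q}$ is trivial because $\mathcal{Q}'$ is obtained by restricting the index set. Conversely, every $Q_i \in \mathcal{Q}$ equals $Q_{i'}$ for the chosen representative $i' \in I'$, hence $Q_i \subset Q_{i'} \subset (Q_{i'})^{\ast}$, giving $\mathcal{Q} \leq \mathcal{Q}'$ with constant $k = 1$. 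Thus $\mathcal{Q}$ and $\mathcal{Q}'$ are equivalent, completing the proof.
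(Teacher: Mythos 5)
Your proof is correct, and the middle part takes a genuinely different route from the paper. The countability argument is essentially the paper's: both extract a countable subcover and use the admissibility bound \eqref{admissibility_constant} to see that $I$ is a countable union of finite neighbour sets. For the concatenation property, however, the paper reformulates \eqref{concatenation_property} as the existence of a chain between any two points $x,y$, covers the compact straight line segment from $x$ to $y$ by finitely many $Q_j$'s, and then appeals to ``a standard topological argument'' to reorder these into a chain; you instead fix $i_0$, set $U=\bigcup_k \bigcup\{Q_j : Q_j\in Q_{i_0}^{k\ast}\}$, and run a clopen argument against the connectedness of $\mathbb{R}^n$, with the closedness step (every $Q_j$ containing a closure point of $U$ must meet some $U_k$ and hence lie in $Q_{i_0}^{(k+1)\ast}$) doing the real work. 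Your version is more self-contained --- it avoids the unproven reordering step --- and it needs only connectedness plus the Lindel\"of property rather than path-connectedness, so it applies in slightly greater generality than the extension sketched in the paper's remark; the paper's version is more geometric and makes the chain structure explicit. For the removal of repeated elements the paper simply declares the claim obvious, whereas you spell out the two subordination inequalities via a choice of representatives; your observation that each equivalence class is finite is not needed for equivalence of the coverings, but it is a correct consequence of \eqref{admissibility_constant} and does no harm.
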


\begin{proof}
Since $\mathcal{Q}$ is an open covering on $\mathbb{R}^n$ we can find a countable subcovering $\mathcal{Q}' = (Q_j)_{j \in J}$ of $\mathcal{Q}$ with $J \subset I$. Consider the sets \[A_{j} := \left\{i \in I \, \Big| \, Q_i \cap Q_j \neq \emptyset \right\}, \quad j \in J.\] Then for every $i \in I$ we can find a set $A_j$ with $j \in J$ such that $i \in A_{j}$ since $\mathcal{Q}'$ is a covering on $\mathbb{R}^n$. The set $\cup_{j \in J}A_{j}$ is countable and we obtain that $I$ has to be countable as well. \par 
The concatenation property \eqref{concatenation_property} is equivalent to the following statement: Given $x,y \in \mathbb{R}^n$ we can find a sequence $Q_{i_1}, \dots, Q_{i_k} \in \mathcal{Q}$ of elements in $\mathcal{Q}$ with $x \in Q_{i_1}$ and $y \in Q_{i_k}$  such that $Q_{i_l} \cap Q_{i_{l+1}} \ne \emptyset$ for every $1 \leq l \leq k-1$. Such a sequence is called a \textit{chain} from $x$ to $y$ in \cite{Hans_Grobner}. To see that this is always possible to find, consider the straight line \begin{equation}
\label{straight_line}
    \gamma_{x,y}:[0,1] \to \mathbb{R}^{n}, \qquad \gamma_{x,y}(t) = ty + (1-t)x,
\end{equation} connecting $x$ and $y$. Since the image of $\gamma_{x,y}$ is compact and the elements in $\mathcal{Q}$ are open, we can find a finite set of elements $(Q_j)_{j \in J}$ in $\mathcal{Q}$ such that \[\textrm{Im}(\gamma_{x,y}) \subset \bigcup_{j \in J}Q_j.\] A standard topological argument using the openness of the elements $(Q_j)_{j \in J}$ shows that we can reorder $(Q_j)_{j \in J}$ to obtain a chain from $x$ to $y$. The final statement is obvious from the definition of equivalent coverings.
\end{proof}

\begin{remark}
We would like to emphasize that the proof of Lemma \ref{lemma_about_basic_properties_of_coverings} goes through if, instead of $\mathbb{R}^n$, we consider a path-connected topological space $X$ where any open covering on $X$ has a countable subcover. The only modification is that we would need to pick an abstract continuous path from $x$ to $y$ guaranteed by the path-connectedness of $X$ rather than the straight line given in \eqref{straight_line}. These conditions hold for all connected manifolds and hence include most settings considered in the literature. 
\end{remark}

\subsection{$\alpha$-Modulation Spaces}
\label{sec: alpha_Modulation_Spaces _on _Euclidean _Spaces}

We now give the definitions of the Euclidean $\alpha$-coverings and $\alpha$-modulation spaces $M_{p,q}^{s,\alpha}(\mathbb{R}^n)$. This will serve as a motivation for the generalization to stratified Lie groups described in the next sections.

\begin{definition}
\label{definition_Euclidean_alpha_covering}
An admissible covering $\mathcal{Q}^{\alpha} = (Q_{i}^{\alpha})_{i \in I}$ on $\mathbb{R}^{n}$ consisting of open and connected sets is called an $\alpha$-\textit{covering} for $0 \leq \alpha \leq 1$ if 
\begin{itemize}
    \item The sets $Q_{i}^{\alpha} \in \mathcal{Q}^{\alpha}$ satisfy $|Q_{i}^{\alpha}| \asymp (1 + \|\xi_i\|_{E}^2)^{\frac{\alpha n}{2}}$ for all $\xi_i \in Q_{i}^{\alpha}$.
    \item For each $i \in I$ we denote by $r\left(Q_{i}^{\alpha}\right)$ and $R\left(Q_{i}^{\alpha}\right)$ the numbers  
\begin{align*}
    r\left(Q_{i}^{\alpha}\right) & := \sup \left\{r \in \mathbb{R} \, \Big| \, B(c_r,r) \subset Q_{i}^{\alpha} \textrm{ for some } c_r \in \mathbb{R} \right\}, \\ 
    R\left(Q_{i}^{\alpha}\right) & := \inf \left\{R \in \mathbb{R} \, \Big| \, Q_{i}^{\alpha} \subset B(C_r,R)\textrm{ for some } C_r \in \mathbb{R} \right\}.
\end{align*}
There should exists a constant $K \geq 1$ such that 
\begin{equation}
\label{euclidean_ratio_bound}
    \sup_{i \in I} \frac{R\left(Q_{i}^{\alpha}\right)}{r\left(Q_{i}^{\alpha}\right)} \leq K.
\end{equation}
\end{itemize}
\end{definition}

There is much variation in the literature about the definition of $\alpha$-coverings: In \cite{BN2} the authors do not require \eqref{euclidean_ratio_bound} to hold. In \cite[Chapter 9]{Felix_main}, the author considers a concrete covering that satisfies Definition \ref{definition_Euclidean_alpha_covering}. Our definition is the same as in \cite{BN1} and is motivated by the following remark.

\begin{remark}
\label{weakly_equivalent_remark}
It follows from \cite[Lemma B.2]{BN1} that any two $\alpha$-coverings on $\mathbb{R}^n$ as we have defined them are weakly equivalent. Thus they are in fact equivalent since they consist of open and connected sets. To see that connectedness is a necessary condition, we can take $\mathcal{Q} = (Q_n)_{n \in \mathbb{Z}}$ to be the covering $Q_{n} = (n - 1, n + 1)$ and $\mathcal{P} = (U_k)_{k \in \mathbb{N}_0}$ to be the covering \[U_0 = (-2,2), \quad U_{k} = (-k-2,-k) \cup (k,k+2), \quad k \in \mathbb{N}.\] Both coverings are $0$-coverings on $\mathbb{R}$. However, they are clearly not equivalent since $\mathcal{P}$ is not almost subordinate to $\mathcal{Q}$. 
\end{remark}

Let $\mathcal{Q} = (Q_i)_{i \in I}$ be an admissible covering on $\mathbb{R}^n$. A \textit{(smooth) bounded admissible partition of unity} subordinate to $\mathcal{Q}$ ($\mathcal{Q}$-BAPU) is a family of non-negative smooth functions $\Phi = (\psi_{i})_{i \in I}$ on $\mathbb{R}^{n}$ such that 
\begin{equation}
\label{BAPU_definition}
    \textrm{supp}(\psi_{i}) \subset Q_i, \qquad \sum_{i \in I}\psi_{i} \equiv 1, \qquad \sup_{i \in I}\left\|\mathcal{F}^{-1}\psi_{i}\right\|_{L^1} < \infty.
\end{equation}

\begin{definition}
\label{usual_alpha_mod_spaces}
Let $\mathcal{Q}^{\alpha} = (Q_{i}^{\alpha})_{i \in I}$ be an $\alpha$-covering on $\mathbb{R}^n$ and let $\Phi = (\psi_{i})_{i \in I}$ be a $\mathcal{Q}^{\alpha}$-BAPU. For $1 \leq p,q \leq \infty$, $s \in \mathbb{R}$ and $0 \leq \alpha \leq 1$ we define the $\alpha$\textit{-modulation space} $M_{p,q}^{s,\alpha}(\mathbb{R}^n)$ to be all tempered distributions $f \in \mathcal{S}'(\mathbb{R}^n)$ such that \[\|f\|_{M_{p,q}^{s,\alpha}} := \left(\sum_{i \in I}\left(1 +  \|\xi_{i}\|_{E}^{2}\right)^{\frac{qs}{2}}\left\|\mathcal{F}^{-1}\left(\psi_{i} \cdot \mathcal{F}(f)\right)\right\|_{L^{p}}^{q}\right)^{\frac{1}{q}} < \infty,\] where $\xi_{i} \in Q_{i}^{\alpha}$ for every $i \in I$. If $q = \infty$ we use the obvious modification from summation to supremum.
\end{definition}

The $\alpha$-modulation spaces $M_{p,q}^{s,\alpha}(\mathbb{R}^n)$ were first introduced in \cite{GrobnerThesis}. Since two $\alpha$-coverings on $\mathbb{R}^{n}$ are equivalent, we obtain from \cite[Theorem 3.7]{Hans_Grobner} that the resulting $\alpha$-modulation space does not depend on the choice of $\alpha$-covering. Moreover, they do not depend on the choice of bounded admissible partition of unity either by \cite[Theorem 2.3 B)]{Hans_Grobner}. 

\begin{example}
\label{usual_modulation_spaces_definition}
If $\alpha = 0$ then an option for an $\alpha$-covering on $\mathbb{R}^n$ is the \textit{uniform covering} \[\mathcal{U}(\mathbb{R}^n) = (Q_{m_1,\dots,m_n})_{m_1,\dots,m_n \in \mathbb{Z}}, \qquad Q_{m_1,\dots,m_n} := (-1,1)^{n} + (m_1,\dots,m_n).\] The resulting spaces $M_{p,q}^{s}(\mathbb{R}^n) := M_{p,q}^{s,0}(\mathbb{R}^n)$ are precisely the \textit{modulation spaces with polynomial weights}. They are typically denoted by $M_{v_s}^{p,q}(\mathbb{R}^n)$ in the literature and we refer the reader to \cite[Chapter 11]{TF_analysis} for more information on them.
\end{example}

\begin{example}
\label{dyadic_covering_example}
If $\alpha = 1$ and $n \geq 2$ we can use the \textit{dyadic covering} $\mathcal{B}(\mathbb{R}^n) = (D_{m})_{m = 0}^{\infty}$ where $D_{0} = B(0,2)$ and 
\begin{equation}
\label{Besov_Covering}
    D_{m} = \left\{x \in \mathbb{R}^n \, \Big | \, 2^{m-1} < \|x\|_{E} < 2^{m+1}\right\}, \quad m \in \mathbb{N}.
\end{equation}
The resulting spaces $\mathcal{B}_{p,q}^{s}(\mathbb{R}^n) := M_{p,q}^{s,0}(\mathbb{R}^n)$ are the \textit{(inhomogeneous) Besov spaces}. For $n = 1$ the covering given in \eqref{Besov_Covering} is not connected and we would need to split each of the sets $D_{m}$ for $m \geq 1$ into its two connected components and consider them individually to obtain a $1$-covering.
\end{example}

To summarize, the $\alpha$-modulation spaces are a one-parameter class of Banach spaces connecting the modulation spaces used in time-frequency analysis and the Besov spaces used in harmonic analysis. 

\section{Generalized $\alpha$-Coverings}
\label{sec: G_alpha_coverings}

We will in this section define generalized $\alpha$-coverings for $0 \leq \alpha \leq 1$ on $\mathbb{R}^n$ that reflect the stratified Lie group structure $(\mathbb{R}^n,*_G)$ and extend the $\alpha$-coverings defined in Subsection \ref{sec: alpha_Modulation_Spaces _on _Euclidean _Spaces}.  We will emphasize the role of the homogeneous quasi-norms and lattices in the stratified Lie group $(\mathbb{R}^n,*_G)$.

\subsection{Definition and Equivalence}
\label{sec:Definition_and_Equivalence}

From now on, we identify a stratified Lie group $G$ with $(\mathbb{R}^n,*_G)$ through the exponential map and fix a homogeneous quasi-norm $\| \cdot \|$ on $(\mathbb{R}^n,*_G)$. By doing this, we have to keep track of that $\mathbb{R}^n$ is equipped with a both a group structure $*_G$ and a Lie algebra structure $\mathbb{R}^n \simeq \mathfrak{g} = V_1 \oplus \dots \oplus V_s$, where $s$ is the step of $G$. When writing elements $x \in (\mathbb{R}^n,*_G)$ in coordinates $x = (x_1, \dots, x_n)$ we implicitly assume that we have chosen a basis $v_1,\dots, v_n$ for $\mathbb{R}^n$ that is \textit{adapted to the stratification}. This means that $v_1, \dots, v_{\textrm{dim}(V_1)}$ is a basis for $V_1,$ $v_{\textrm{dim}(V_1) + 1}, \dots, v_{\textrm{dim}(V_2)}$ is a basis for $V_2$, and so on. 

\begin{definition}
\label{definition_alpha_coverings}
Let $(\mathbb{R}^n,*_G,\|\cdot\|)$ be a stratified Lie group with homogeneous dimension $Q$ where $\|\cdot\|$ is a chosen homogeneous quasi-norm. For a fixed $0 \leq \alpha \leq 1$ we call an admissible covering $\mathcal{P}^{\alpha} = (P_{i}^{\alpha})_{i \in I}$ on $\mathbb{R}^n$ consisting of open and connected sets a \textit{generalized $\alpha$-covering} for the stratified Lie group $(\mathbb{R}^n,*_G)$ if it satisfies the following two properties:

\begin{itemize}
    \item The sets $P_{i}^{\alpha}$ satisfy the estimates
    \begin{equation}
\label{size_of_coverings}
    |P_{i}^{\alpha}| \asymp \left(1+\|\xi_{i}\|^2\right)^{\frac{\alpha Q}{2}},
\end{equation}
for all $\xi_i \in P_{i}^{\alpha}$ and every $i \in I$.
\item For each $i \in I$ we denote by $r^{\|\cdot\|}\left(P_{i}^{\alpha}\right)$ and $R^{\|\cdot\|}\left(P_{i}^{\alpha}\right)$ the numbers  
\begin{align*}
    r^{\|\cdot\|}\left(P_{i}^{\alpha}\right) & = \sup \left\{r \in \mathbb{R} \, \Big| \, B^{\|\cdot\|}(c_r,r) \subset P_{i}^{\alpha} \textrm{ for some } c_r \in \mathbb{R} \right\}, \\ 
    R^{\|\cdot\|}\left(P_{i}^{\alpha}\right) & = \inf \left\{R \in \mathbb{R} \, \Big| \, P_{i}^{\alpha} \subset B^{\|\cdot\|}(C_r,R)\textrm{ for some } C_r \in \mathbb{R} \right\}.
\end{align*}
There should exists a constant $K \geq 1$ such that 
\begin{equation}
\label{ratio_condition}
    \sup_{i \in I} \frac{R^{\|\cdot\|}\left(P_{i}^{\alpha}\right)}{r^{\|\cdot\|}\left(P_{i}^{\alpha}\right)} \leq K.
\end{equation}
\end{itemize}
\end{definition}

Notice that the numbers $r^{\|\cdot\|}\left(P_{i}^{\alpha}\right)$ and $R^{\|\cdot\|}\left(P_{i}^{\alpha}\right)$ are strictly positive since we assume that the sets $P_{i}^{\alpha}$ are open. Condition \eqref{ratio_condition} is necessary to obtain that two generalized $\alpha$-coverings are equivalent as will be shown in Proposition \ref{proposition_about_equivalence}. Notice that Lemma \ref{lemma_about_basic_properties_of_coverings} implies that the index set $I$ has to be countable and that the covering $\mathcal{P}^{\alpha}$ is automatically a concatenation.

\begin{remark}
When $G = (\mathbb{R}^n,+)$ the homogeneous dimension satisfies $Q = n$ and we regain the definition of the $\alpha$-coverings given in Definition \ref{definition_Euclidean_alpha_covering}. The reason for realizing generalized $\alpha$-coverings corresponding to stratified Lie groups on Euclidean space is to involve the Euclidean Fourier transform when we define generalized $\alpha$-modulation spaces in Section \ref{sec: G_alpha_modulation_spaces}. Moreover, the heuristic reason we use the homogeneous dimension $Q$ in \eqref{size_of_coverings} instead of the dimension $n$ is that we would like to obtain \textquote{Besov type spaces} for $\alpha = 1$ that incorporate the intrinsic dilations $D_{r}^{G}$ of the stratified Lie group $(\mathbb{R}^n,*_G)$. We will see later in Lemma \ref{Besov_covering_lemma} that this intuition gives a concrete $1$-covering that is similar to the dyadic covering given in Example \ref{dyadic_covering_example}. \par 
\end{remark}

Whenever $\alpha = 0$ then condition \eqref{size_of_coverings} simply says that the Lebesgue measure of the sets $P_{i}^{0}$ is constant. One could wonder whether the uniform covering $\mathcal{U}(\mathbb{R}^n)$ in Example \ref{usual_modulation_spaces_definition} satisfies \eqref{ratio_condition} and thus is a generalized $\alpha$-covering for any rational stratified Lie group $(\mathbb{R}^n,*_G)$ other than $(\mathbb{R}^n,+)$. We will see in Proposition \ref{Besov_are_all_the_same} that this is not the case by using arguments from metric geometry. The reader might get some motivation for this approach by trying to prove this statement directly without additional tools. \par 

\begin{example}
For the Heisenberg group $\mathbb{H}_3$ with the homogeneous Cygan-Koranyi norm \eqref{homogeneous_Cygan-Koranyi_norm} we have that generalized $\alpha$-coverings $\mathcal{P}^{\alpha} = (P_{i}^{\alpha})_{i \in I}$ for $0 \leq \alpha \leq 1$ satisfy \[|P_{i}^{\alpha}| \asymp \left(1 + \left((x^2 + \omega^2)^2 + 16t^2\right)^{\frac{1}{2}}\right)^{2\alpha} \asymp \left(1 + x^4 + \omega^4 + 2x^2\omega^2 + 16t^2\right)^{\alpha},\] for $(x,\omega,t) \in P_{i}^{\alpha}$.
\end{example}

We give explicit examples of generalized $\alpha$-coverings in Subsection \ref{sec: Concrete_Examples}. Before that, we turn to the question about equivalence. The following proposition implies that the specific choice of homogeneous quasi-norm and generalized $\alpha$-covering does not matter when we define the generalized $\alpha$-modulation spaces $M_{p,q}^{s,\alpha}(G)$ in Section \ref{sec: G_alpha_modulation_spaces}. The proof of the second statement in Proposition \ref{proposition_about_equivalence} is inspired by the proof of the corresponding statement for Euclidean $\alpha$-coverings given in \cite[Appendix B]{BN1}.

\begin{proposition}
\label{proposition_about_equivalence}
A covering $\mathcal{P}^{\alpha}$ on $\mathbb{R}^n$ is a generalized $\alpha$-covering for the stratified Lie group $(\mathbb{R}^n,*_G)$ independently of the choice of the homogeneous quasi-norm. Moreover, any two generalized $\alpha$-coverings $\mathcal{Q}^{\alpha}$ and $\mathcal{P}^{\alpha}$ for $(\mathbb{R}^n,*_G)$ are equivalent.
\end{proposition}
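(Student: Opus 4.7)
For the first claim, the plan is to invoke Lemma \ref{homogeneous_quasi_norm_result} directly. If $\|\cdot\|_{1}$ and $\|\cdot\|_{2}$ are two homogeneous quasi-norms with $C^{-1}\|g\|_{1} \leq \|g\|_{2} \leq C\|g\|_{1}$, then the inclusions $B^{\|\cdot\|_{1}}(c,r/C) \subset B^{\|\cdot\|_{2}}(c,r) \subset B^{\|\cdot\|_{1}}(c,Cr)$ immediately give $r^{\|\cdot\|_{1}}(P_{i}^{\alpha}) \asymp r^{\|\cdot\|_{2}}(P_{i}^{\alpha})$ and $R^{\|\cdot\|_{1}}(P_{i}^{\alpha}) \asymp R^{\|\cdot\|_{2}}(P_{i}^{\alpha})$, so \eqref{ratio_condition} transfers. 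The weight $(1+\|\xi\|^{2})^{\alpha Q/2}$ on the right-hand side of \eqref{size_of_coverings} is also invariant up to equivalence. Hence both defining conditions are independent of the choice of quasi-norm.

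For the equivalence of two generalized $\alpha$-coverings $\mathcal{Q}^{\alpha}=(Q_{i}^{\alpha})_{i\in I}$ and $\mathcal{P}^{\alpha}=(P_{j}^{\alpha})_{j\in J}$, the plan is to establish weak subordination in both directions by a volume count and then upgrade to equivalence via \cite[Proposition 3.6]{Hans_Grobner}. The first step is a pointwise comparison of the radii. The Haar measure of a quasi-norm ball is
\[\bigl|B^{\|\cdot\|}(c,r)\bigr| = r^{Q}\,\bigl|B^{\|\cdot\|}(e,1)\bigr|,\]
because $B^{\|\cdot\|}(c,r) = c *_{G} D_{r}^{G}\bigl(B^{\|\cdot\|}(e,1)\bigr)$ and the Haar measure on $G$ coincides with Lebesgue measure. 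Combining this with \eqref{size_of_coverings} and \eqref{ratio_condition} forces
\[r^{\|\cdot\|}(P_{j}^{\alpha}) \asymp R^{\|\cdot\|}(P_{j}^{\alpha}) \asymp (1+\|\xi\|^{2})^{\alpha/2}\]
for every $\xi \in P_{j}^{\alpha}$, and in particular $1+\|\cdot\|$ is essentially constant on each covering element. The analogous statement is true on $\mathcal{Q}^{\alpha}$.

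Now suppose $Q_{i}^{\alpha} \cap P_{j}^{\alpha} \ne \emptyset$ and pick $\xi$ in the intersection together with centers $C_{i}$, $C_{j}$ realizing the outer balls, with outer radii $R_{i} := R^{\|\cdot\|}(Q_{i}^{\alpha})$ and $R_{j} := R^{\|\cdot\|}(P_{j}^{\alpha})$. The previous step gives $R_{i} \asymp R_{j}$, and the quasi-triangle inequality \eqref{quasi_norm_constant} together with the symmetry of $\|\cdot\|$ yields
\[\|C_{i}^{-1} *_{G} C_{j}\| \leq C\bigl(\|C_{i}^{-1} *_{G} \xi\| + \|\xi^{-1} *_{G} C_{j}\|\bigr) \lesssim R_{i} + R_{j} \lesssim R_{i},\]
so every such $P_{j}^{\alpha}$ lies in a single ball $B^{\|\cdot\|}(C_{i}, C_{0}R_{i})$ whose constant $C_{0}$ depends only on the quasi-norm constant and on $K$. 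The admissibility bound $N_{\mathcal{P}^{\alpha}}$ together with the ball volume formula then gives
\[\#\{\, j \in J : P_{j}^{\alpha} \cap Q_{i}^{\alpha} \ne \emptyset \,\} \cdot \min_{j}|P_{j}^{\alpha}| \leq N_{\mathcal{P}^{\alpha}} \cdot \bigl|B^{\|\cdot\|}(C_{i}, C_{0}R_{i})\bigr|,\]
and both sides are comparable to $R_{i}^{Q}$, so the counting function is bounded uniformly in $i$. By symmetry $\mathcal{Q}^{\alpha}$ and $\mathcal{P}^{\alpha}$ are weakly equivalent, and since both coverings consist of open and connected sets, \cite[Proposition 3.6]{Hans_Grobner} promotes weak equivalence to equivalence.

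The main technical obstacle will be the radius comparison step: one needs to verify carefully that $1+\|\cdot\|$ really is essentially constant on each $P_{j}^{\alpha}$ in the Besov-type regime $\alpha = 1$, where the outer radius $R_{j}$ is of the same order as $\|\xi_{j}\|$ itself, so that iterating the quasi-triangle inequality \eqref{quasi_norm_constant} does not blow up the constants. Once this local control is in hand, the volume count and the appeal to \cite[Proposition 3.6]{Hans_Grobner} are routine.
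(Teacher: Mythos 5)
Your proposal is correct and follows essentially the same route as the paper: quasi-norm equivalence for the first claim, and for the second the ball volume formula $|B^{\|\cdot\|}(c,r)| = r^{Q}|B^{\|\cdot\|}(e,1)|$, the comparability $r^{\|\cdot\|}(P_{i}^{\alpha}) \asymp R^{\|\cdot\|}(P_{i}^{\alpha}) \asymp |P_{i}^{\alpha}|^{1/Q}$, containment of all intersecting elements in a single comparable ball, a volume count giving weak equivalence, and the upgrade to equivalence via \cite[Proposition 3.6]{Hans_Grobner}. The only cosmetic difference is that you run the counting step as a direct bound where the paper argues by contradiction with admissibility, and your closing worry about $\alpha=1$ is moot since \eqref{size_of_coverings} is required to hold for \emph{every} $\xi_i \in P_i^{\alpha}$, which already encodes the local constancy you need.
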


\begin{proof}
Assume that $\| \cdot \|_1$ and $\| \cdot \|_2$ are two homogeneous quasi-norms on $(\mathbb{R}^n,*_G)$ and that $\mathcal{P} = (P_{i}^{\alpha})_{i \in I}$ is a covering that satisfies \eqref{size_of_coverings} with respect to $\|\cdot\|_{1}$. It follows from Lemma \ref{homogeneous_quasi_norm_result} that \[|P_{i}^{\alpha}| \asymp (1+\|\xi_{i}\|_{1}^2)^{\frac{\alpha Q}{2}} \asymp (1+\|\xi_{i}\|_{2}^2)^{\frac{\alpha Q}{2}},\] for all $\xi_i \in P_{i}^{\alpha}$. Similarly, we have $r^{\|\cdot\|_{1}}(P_{i}^{\alpha}) \asymp r^{\|\cdot\|_{2}}(P_{i}^{\alpha})$ and $R^{\|\cdot\|_{1}}(P_{i}^{\alpha}) \asymp R^{\|\cdot\|_{2}}(P_{i}^{\alpha})$ independently of $i \in I$. Hence condition \eqref{ratio_condition} is satisfied for $\|\cdot\|_2$ when it is satisfied for $\|\cdot\|_{1}$ and the first statement follows. \par 
For the last statement, it suffices by \cite[Proposition 3.6]{Hans_Grobner} to show that $\mathcal{Q}^{\alpha}$ and $\mathcal{P}^{\alpha}$ are weakly equivalent since they both consist of connected and open sets. Let us fix a homogeneous quasi-norm $\| \cdot \|$ on $(\mathbb{R}^n,*_{G})$ and denote by $\mu := |B^{\|\cdot\|}(0,1)|$. We first claim that 
\begin{equation}
\label{comparable_sizes}
    |P_{i}^{\alpha}| \asymp \left(r^{\|\cdot\|}(P_{i}^{\alpha})\right)^{Q} \asymp \left(R^{\|\cdot\|}(P_{i}^{\alpha})\right)^{Q},
\end{equation}
where $Q$ denotes the homogeneous dimension of $(\mathbb{R}^n,*_G)$. Since the usual Lebesgue measure is the Haar measure on $(\mathbb{R}^n,*_{G})$ we have \[|B^{\|\cdot\|}(x,R)| = |x *_{G} B^{\|\cdot\|}(0,R)| = |B^{\|\cdot\|}(0,R)| = |D_{R}^{G}B^{\|\cdot\|}(0,1)| = R^{Q}\mu,\] where $x \in \mathbb{R}^n$ is arbitrary and $R > 0$. Hence \[\mu \cdot \left(r^{\|\cdot\|}(P_{i}^{\alpha})\right)^{Q} \leq |P_{i}^{\alpha}| \leq \mu \cdot \left(R^{\|\cdot\|}(P_{i}^{\alpha})\right)^{Q},\] for every $i \in I$. This implies \eqref{comparable_sizes} since \[\mu \leq \frac{|P_{i}^{\alpha}|}{\left(r^{\|\cdot\|}(P_{i}^{\alpha})\right)^{Q}} = \frac{\left(R^{\|\cdot\|}(P_{i}^{\alpha})\right)^{Q}}{\left(r^{\|\cdot\|}(P_{i}^{\alpha})\right)^{Q}} \cdot \frac{|P_{i}^{\alpha}|}{\left(R^{\|\cdot\|}(P_{i}^{\alpha})\right)^{Q}} \leq K^{Q}\mu,\] where $K \geq 1$ denotes the uniform bound in \eqref{ratio_condition}. \par 
Assume that $Q_{j}^{\alpha} \cap P_{i}^{\alpha}  \neq \emptyset$ for some $i \in I$ and some $j \in J$. Then \eqref{size_of_coverings} together with \eqref{comparable_sizes} give the estimate \[R^{\|\cdot\|}(Q_{j}^{\alpha}) \asymp R^{\|\cdot\|}(P_{i}^{\alpha}) \asymp r^{\|\cdot\|}(P_{i}^{\alpha}).\] Hence there exists a uniform constant $\kappa \geq 1$ such that 
\begin{equation}
\label{inside_ball}
    Q_{j}^{\alpha} \subset B^{\|\cdot\|}\left(c_i,\kappa r^{\|\cdot\|}(P_{i}^{\alpha})\right), \quad c_i \in P_{i}^{\alpha}.
\end{equation}
For every $i \in I$ we consider the constants \[A_{\alpha}(i) := \# \left\{Q_{j}^{\alpha} \in \mathcal{Q}^{\alpha} \, \Big| \, Q_{j}^{\alpha} \cap P_{i}^{\alpha}  \neq \emptyset \right\}.\]
Assume that there exist a sequence $i_k \in I$ with $k \in \mathbb{N}$ such that $A_{\alpha}(i_k) \to \infty$. Then if $Q_{j_k}^{\alpha} \cap P_{i_k}^{\alpha} \neq \emptyset$, we have
\begin{equation}
\label{last_equation_in_proof}
    \frac{|Q_{j_k}^{\alpha}|}{\Big|B^{\|\cdot\|}\left(c_{i_k},\kappa r^{\|\cdot\|}(P_{i_k}^{\alpha})\right)\Big|} \asymp \left(\frac{1}{\kappa \mu}\right)^{Q}.
\end{equation} 
Notice that the right hand side of \eqref{last_equation_in_proof} does not depend on $k \in \mathbb{N}$. Thus \eqref{inside_ball} and \eqref{last_equation_in_proof} give a contradiction since $\mathcal{Q}^{\alpha}$ is assumed to be admissible.  
\end{proof}

Notice that Proposition \ref{proposition_about_equivalence} still leaves open the possibility that a generalized $\alpha_1$-covering $\mathcal{P}^{\alpha_1}$ and a generalized $\alpha_2$-covering $\mathcal{Q}^{\alpha_2}$ for a stratified Lie group $(\mathbb{R}^n,*_G)$ might be equivalent whenever $\alpha_1 \neq \alpha_2$. We will prove in Theorem \ref{weakly_subordinate_result} that is not possible.

\subsection{Concrete Examples}
\label{sec: Concrete_Examples}

We now turn to giving concrete examples of generalized $\alpha$-coverings. It will be clear that we need to require that $(\mathbb{R}^n,*_G)$ is rational in the intermediate case $0 < \alpha < 1$. For $\alpha = 0$, the existence of a lattice $N \subset (\mathbb{R}^n,*_G)$ is convenient but not nessesary. For $\alpha = 1$ the existence of a lattice is irrelevant. The main difference from the Euclidean case is that we do not have the luxury of picking the \textquote{canonical} lattice $\mathbb{Z}^n$.

\subsubsection{The Uniform Case: $\alpha = 0$}
\label{sec: Uniform_Case}

We would like to find a concrete generalized $0$-covering for $(\mathbb{R}^n, *_G)$ that, similarly to the Euclidean case, reflects the group operation $*_G$. Such a covering was constructed for any locally compact group in \cite{Hans_2} and we briefly review this in the setting of stratified Lie groups realized on Euclidean space. \par
For every stratified Lie group $(\mathbb{R}^n,*_G, \|\cdot\|)$ there exists a covering $\mathcal{U}$ on $\mathbb{R}^n$ constructed in the following manner: Fix the set $B^{\|\cdot\|}(0,1)$ and consider the collection \[\left \{x *_{G} B^{\|\cdot\|}(0,1)\right\}_{x \in \mathbb{R}^n} = \left \{B^{\|\cdot\|}(x,1)\right\}_{x \in \mathbb{R}^n}.\]

\begin{lemma}
\label{uniform_lemma}
There exists a family of elements $\{x_i\}_{i \in I}$ with $x_i \in \mathbb{R}^n$ for every $i \in I$ such that \[\mathcal{U}(G) := \left \{B^{\|\cdot\|}(x_i,1)\right\}_{i \in I}\] is a generalized $0$-covering for the stratified Lie group $(\mathbb{R}^n,*_G)$. 
\end{lemma}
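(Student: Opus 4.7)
The plan is a standard maximal-packing construction in the left-invariant (quasi-)metric induced by $\|\cdot\|$. By Proposition \ref{proposition_about_equivalence} the conclusion does not depend on the choice of homogeneous quasi-norm, so I would first replace $\|\cdot\|$ by a genuine homogeneous norm, as afforded by the discussion following Lemma \ref{homogeneous_quasi_norm_result}. Then $d(x,y) := \|x^{-1} *_G y\|$ is a true left-invariant metric on $(\mathbb{R}^n, *_G)$, with symmetry coming from the defining symmetry $\|g\| = \|g^{-1}\|$ of a homogeneous quasi-norm.

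Next, I would invoke Zorn's lemma to produce a maximal set $\{x_i\}_{i \in I}$ satisfying the separation condition $d(x_i, x_j) \geq 1/2$ whenever $i \neq j$. By the triangle inequality the balls $B^{\|\cdot\|}(x_i, 1/4)$ are then pairwise disjoint, and since each has the same positive Lebesgue measure (as computed in the proof of Proposition \ref{proposition_about_equivalence}), the index set $I$ must be countable. Maximality immediately gives that $\mathcal{U}(G) = \{B^{\|\cdot\|}(x_i,1)\}_{i \in I}$ is a covering of $\mathbb{R}^n$: any $y$ with $d(x_i, y) \geq 1/2$ for all $i$ could be appended to the family without violating the separation condition, contradicting maximality; hence every $y$ satisfies $d(x_i, y) < 1$ for some $i$.

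I would then verify the remaining items of Definition \ref{definition_alpha_coverings}. Openness of the balls is standard, and connectedness follows from the observation that $B^{\|\cdot\|}(0,1)$ is star-shaped with respect to the identity via the continuous path $t \mapsto D_t^G(y)$, which satisfies $\|D_t^G(y)\| = t\|y\| < 1$ for $t \in [0,1]$ and extends continuously to the value $0$ at $t = 0$; this path transfers to an analogous path at $x_i$ by left translation. The measure condition \eqref{size_of_coverings} at $\alpha = 0$ collapses to $|B^{\|\cdot\|}(x_i, 1)| = |B^{\|\cdot\|}(0,1)|$, which is exact since Haar and Lebesgue measure coincide and are left-invariant. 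The ratio condition \eqref{ratio_condition} is trivial with $K = 1$ since the inner and outer radii of a ball of radius $1$ both equal $1$.

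The only step with real substance is admissibility. If $B^{\|\cdot\|}(x_i, 1) \cap B^{\|\cdot\|}(x_j, 1) \neq \emptyset$ then $d(x_i, x_j) < 2$, so all such $x_j$ lie in $B^{\|\cdot\|}(x_i, 2)$, and their pairwise disjoint $1/4$-balls all lie in $B^{\|\cdot\|}(x_i, 9/4)$. Applying the identity $|B^{\|\cdot\|}(y, R)| = R^Q \mu$ derived in the proof of Proposition \ref{proposition_about_equivalence} yields a uniform bound of $9^Q$ on the number of neighbours, verifying \eqref{admissibility_constant}. This volume-packing step is where the group operation, the dilations, and the homogeneous dimension $Q$ genuinely interact; I expect the need to correctly account for the power $Q$ rather than the topological dimension $n$ to be the main (though modest) conceptual obstacle in an otherwise routine construction.
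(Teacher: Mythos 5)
Your proof is correct, but it takes a genuinely different route from the paper's on the one substantive point. The paper does not construct the family $\{x_i\}_{i\in I}$ or verify admissibility directly: it cites Feichtinger's general construction for locally compact groups \cite{Hans_2} to obtain a family for which $\mathcal{U}(G)$ is an admissible covering, and only then checks openness, connectedness (via the same dilation path $t \mapsto D_{t}^{G}(y)$ that you use), the size condition \eqref{size_of_coverings}, and the ratio condition \eqref{ratio_condition} with $K=1$. Your maximal-packing argument via Zorn's lemma, combined with the volume-counting bound $9^{Q}$ coming from $|B^{\|\cdot\|}(y,R)| = R^{Q}\mu$, replaces that citation with a self-contained proof; this is more elementary, keeps the argument internal to the paper, and makes visible exactly where the group structure and the homogeneous dimension $Q$ enter, at the cost of some bookkeeping. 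One small point to tidy: the lemma is stated for the quasi-norm $\|\cdot\|$ fixed in the standing assumptions, whereas you run the construction after replacing it by a genuine homogeneous norm, which changes the balls themselves. The first assertion of Proposition \ref{proposition_about_equivalence} does guarantee that the covering you build is a generalized $0$-covering for any choice of quasi-norm, so it suffices to say that you relabel $\|\cdot\|$ without loss of generality; alternatively, the packing argument goes through verbatim for the original quasi-norm using \eqref{quasi_norm_constant}, with the separation threshold and the radius $9/4$ adjusted by the quasi-norm constant $C$. Neither fix affects the validity of your argument.
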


\begin{proof}
It follows from \cite{Hans_2} that there exists a family of elements $\{x_i\}_{i \in I}$ with $x_i \in \mathbb{R}^n$ for every $i \in I$ such that $\mathcal{U}(G)$ is an admissible covering. To show that any ball \[B^{\|\cdot\|}(x,R), \quad x \in \mathbb{R}^n, \, R > 0\] is path-connected, it suffices to consider the unit ball $B_{0} := B^{\|\cdot\|}(0,1)$ by applying a left-translation and a scaling $D_{R}^{G}$. The path $t \mapsto (tx_1, \dots,t^{v_j}x_j, \dots, t^{v_n}x_n)$ for $t \in [0,1]$, $v_j := \textrm{deg}(x_j)$,  and $x := (x_1, \dots,x_n) \in B_{0}$ connects the origin to $x$ and lies within $B_0$ since \[\|(tx_1, \dots, t^{v_n}x_n)\| = \|D_{t}^{G}(x_1, \dots, x_n)\| = |t|\|x\| < 1.\] The balls $B^{\|\cdot\|}(x,R)$ are also open due to the continuity of the homogeneous quasi-norm $\|\cdot\|$. \par 
We are left with checking the two conditions in the definition of a generalized $\alpha$-covering: The first condition \eqref{size_of_coverings} follows readily since \[\left|B^{\|\cdot\|}(x_i,R)\right| = R^Q \left|B^{\|\cdot\|}(0,1)\right| \simeq \left(1 + \|\xi_i\|^{2}\right)^{\frac{0 \cdot Q}{2}},\] where $\xi_i \in B^{\|\cdot\|}(x_i,R)$ and $Q$ is the homogeneous dimension of $(\mathbb{R}^n,*_G)$. The second condition \eqref{ratio_condition} is clearly satisfied with $K = 1$ since the covering consists of balls with respect to the homogeneous quasi-norm $\|\cdot\|$.
\end{proof}

Proposition \ref{proposition_about_equivalence} implies that the choice of the family $\{x_i\}_{i \in I}$ is largely irrelevant as different families will produce equivalent coverings. We refer to $\mathcal{U}(G)$ as the \textit{uniform covering} of the stratified Lie group $(\mathbb{R}^n,*_G)$. \par
In the case where the stratified Lie group $(\mathbb{R}^{n}, *_G)$ is rational, we can be even more concrete: Fix a lattice $N \subset (\mathbb{R}^{n}, *_G)$ and fix $R > 0$ such that the collection
\begin{equation}
\label{spesific_uniform_covering}
    \mathcal{U}(G;N) := \left\{B^{\|\cdot\|}(n,R)\right\}_{n \in N \setminus \{0\}}
\end{equation}
is an admissible covering. This is possible since $N$ is both uniform and discrete. We can again apply Proposition \ref{proposition_about_equivalence} to see that $\mathcal{U}(G;N)$ is equivalent to the uniform covering $\mathcal{U}(G)$ and we consider $\mathcal{U}(G;N)$ as a concrete realization of $\mathcal{U}(G)$. When $G = (\mathbb{R}^n,+)$ and $N = \mathbb{Z}^n$ then the covering $\mathcal{U}(G;N)$ is precisely the covering introduced in Example \ref{usual_modulation_spaces_definition}. Hence the uniform covering $\mathcal{U}(G)$ is a $0$-covering that incorporates information about the group structure $*_G$. 

\subsubsection{The Intermediate Case: $0 < \alpha < 1$}
\label{sec: Intermediate_Case}

We turn to the intermediate range $0 < \alpha < 1$ and give a concrete covering motivated by the most commonly used $\alpha$-covering in the Euclidean setting in e.g. \cite[Chapter 9]{Felix_main}. This covering will require the existence of a lattice $N \subset (\mathbb{R}^n,*_G)$ and extends the covering $\mathcal{U}(G;N)$ introduced above.  \par 

\begin{proposition}
\label{explicit_alpha_covering_prop}
Let $(\mathbb{R}^n,*_G,\|\cdot\|)$ be a rational stratified Lie group with a lattice $N \subset (\mathbb{R}^n, *_G)$. We will use the notation \[\delta_{\beta}(\xi) := \|\xi\|^{\beta} \xi, \quad \xi \in \mathbb{R}^n, \qquad \beta := \frac{\alpha}{1-\alpha},\] where we have fixed $0 \leq \alpha < 1$. There exists $r_1 > 0$ such that the collection \begin{equation}
\label{explicit_alpha_covering}
    \mathcal{Q}_{r}^{\alpha}(G;N) := \left\{B^{\|\cdot\|}\left(\delta_{\beta}(k),r\|k\|^\beta\right)\right\}_{k \in N\setminus \{0\}}
\end{equation} is a generalized $\alpha$-covering for any $r > r_1$. For $\alpha = 0$ the covering $\mathcal{Q}_{r}^{0}(G;N)$ is simply $\mathcal{U}(G;N)$ introduced previously.
\end{proposition}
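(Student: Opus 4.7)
The plan is to verify the five requirements of Definition \ref{definition_alpha_coverings} in order, dispatching the structural conditions (openness, connectedness, and the ratio \eqref{ratio_condition}) quickly and then concentrating on the size condition together with the covering and admissibility properties, which together carry essentially all of the difficulty. Each $B^{\|\cdot\|}(\delta_\beta(k), r\|k\|^\beta)$ is a quasi-norm ball, so it is open by continuity of $\|\cdot\|$ and path-connected by the same dilation argument used in the proof of Lemma \ref{uniform_lemma}; since the set is already a quasi-norm ball, the inscribed and circumscribed radii coincide, giving \eqref{ratio_condition} with $K=1$. For the size condition \eqref{size_of_coverings}, a direct computation using $Q$-homogeneity of Haar measure (as in the proof of Proposition \ref{proposition_about_equivalence}) gives $|B^{\|\cdot\|}(\delta_\beta(k), r\|k\|^\beta)| = r^{Q}\mu\|k\|^{\beta Q}$, which matches $(1+\|\xi\|^2)^{\alpha Q/2}$ for $\xi$ in the ball via the elementary identity $\alpha(\beta+1)=\beta$ together with $\|\delta_\beta(k)\|\asymp\|k\|^{\beta+1}$ coming from the definition, combined with $\|\xi\|\asymp\|\delta_\beta(k)\|$ obtained from \eqref{quasi_norm_constant} whenever $\|k\|$ exceeds some threshold. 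Finitely many lattice elements of smaller norm are absorbed by taking $r_1$ large enough that their balls cover a prescribed bounded neighbourhood of the identity.

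For the covering property, the plan is to exploit that $\delta_\beta$ is a homeomorphism of $\mathbb{R}^n\setminus\{0\}$ onto itself. Given $x$ with $\|x\|$ large, set $\tilde{k}:=\delta_\beta^{-1}(x)$ and invoke uniformness of the lattice $N$ in $(\mathbb{R}^n,*_G)$ to produce $k\in N\setminus\{0\}$ with $\|\tilde{k}^{-1}*_G k\|\leq R_0$ for a constant $R_0$ depending only on $N$ and $\|\cdot\|$. The crucial task is then a Lipschitz-type comparison
\[
\|\delta_\beta(\tilde{k})^{-1}*_G\delta_\beta(k)\|\;\leq\;C\,R_0\,\|k\|^{\beta},
\]
valid whenever $\|k\|$ is sufficiently large, which allows the choice $r>CR_0$ to force $x\in B^{\|\cdot\|}(\delta_\beta(k),r\|k\|^\beta)$. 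Admissibility is then dual: if two of the balls intersect, the quasi-triangle inequality \eqref{quasi_norm_constant} together with the size estimate forces $\|k\|\asymp\|k'\|$, and running the same Lipschitz estimate in reverse yields $\|k^{-1}*_G k'\|\leq C'r$ for some uniform $C'$. Since $N$ is discrete and uniform in $(\mathbb{R}^n,*_G)$, the number of lattice points inside a group-metric ball of fixed radius is uniformly bounded, producing the admissibility constant \eqref{admissibility_constant}.

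The main obstacle is the Lipschitz-type estimate for $\delta_\beta$. The map is neither a group homomorphism nor equivariant under the dilations $D_r^G$ in a clean way, and the product $*_G$ mixes coordinates of different stratification degrees via the BCH formula \eqref{BCH-formula}. My plan to establish it is to exploit the $1$-homogeneity of $\|\cdot\|$ under $D_r^G$ to rescale any pair $\tilde{k},k$ with $\|\tilde{k}\|\asymp\|k\|\asymp R$ via $D_{1/R}^G$ to a pair lying in the compact annulus $\{1/2\leq\|\cdot\|\leq 2\}$, where the estimate follows from smoothness of $\delta_\beta$ away from the identity and a compactness/uniform-continuity argument. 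The polynomial nature of $*_G$ (guaranteed by the BCH formula) ensures this rescaling is well-behaved, and the rational structure of $G$ enters only through the existence and uniformness of $N$, which controls $R_0$. This rescaling reduction is the heart of the argument and replaces the elementary derivative estimate available in the Euclidean case.
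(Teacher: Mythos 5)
Your treatment of openness, connectedness, the ratio condition \eqref{ratio_condition}, and the size condition \eqref{size_of_coverings} matches the paper's proof essentially verbatim (the paper likewise computes $|B^{\|\cdot\|}(\delta_\beta(k),r\|k\|^\beta)| \asymp \|k\|^{\beta Q}$ via $Q$-homogeneity, compares with the centre point, and extends to arbitrary $\xi$ in the ball via \eqref{quasi_norm_constant}). The problem is the part you correctly identify as ``the heart of the argument'': the Lipschitz-type comparison $\|\delta_\beta(\tilde{k})^{-1}*_G\delta_\beta(k)\| \leq C R_0\|k\|^{\beta}$ is \emph{false} on any stratified group of step $\geq 2$ once $0<\alpha<1$. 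Take $G=\mathbb{H}_1$ with $\|\cdot\|_2=(x^2+\omega^2+|t|)^{1/2}$, and note that (consistently with the paper's computation $\|\delta_\beta(k)\|=\|k\|^{\beta+1}$) the map $\delta_\beta$ must be read as $\delta_\beta(\xi)=D^G_{\|\xi\|^\beta}(\xi)$. Put $\tilde{k}=(A,0,A^2)$ and $k=\tilde{k}*_G(1,0,0)=(A+1,0,A^2)$, so $\|\tilde{k}^{-1}*_Gk\|=1$ and $\|k\|-\|\tilde{k}\|\asymp 1$. The vertical components of $\delta_\beta(\tilde{k})$ and $\delta_\beta(k)$ are $\|\tilde{k}\|^{2\beta}A^2$ and $\|k\|^{2\beta}A^2$, and since the commutator terms vanish here, the vertical component of $\delta_\beta(\tilde{k})^{-1}*_G\delta_\beta(k)$ equals $(\|k\|^{2\beta}-\|\tilde{k}\|^{2\beta})A^2\asymp \beta\|\tilde{k}\|^{2\beta}A$, whence $\|\delta_\beta(\tilde{k})^{-1}*_G\delta_\beta(k)\|\gtrsim \|k\|^{\beta}\|k\|^{1/2}$. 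So a unit displacement at height $R=\|k\|$ can be blown up by a factor $R^{\beta+1/2}$, not $R^{\beta}$: a relative change of order $1/R$ in the dilation parameter, applied to a point whose second-layer component has size $R^2$, produces a second-layer discrepancy of homogeneous norm $R^{\beta+1/2}$. Your proposed proof of the estimate cannot rescue this either: rescaling to the compact annulus and using smoothness of $\delta_\beta$ only gives Euclidean-Lipschitz control there, which translates into H\"older continuity of exponent $1/s$ with respect to $d_G$ and hence an even worse bound $R_0^{1/s}R^{\beta+1-1/s}$ after undoing the dilation.

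Since both your covering argument (choosing \emph{any} lattice point within distance $R_0$ of $\delta_\beta^{-1}(x)$) and your admissibility argument (running the estimate ``in reverse'') rest on this comparison, those two parts of the proposal collapse. Indeed the example above shows that $\delta_\beta(\tilde{k})$ lies \emph{outside} $B^{\|\cdot\|}(\delta_\beta(k),r\|k\|^{\beta})$ for every fixed $r$ once $A$ is large, so in the covering step the lattice point must be selected so that its homogeneous norm agrees with that of $\delta_\beta^{-1}(x)$ to accuracy $O(\|x\|^{\alpha-1})$, not merely so that it is at bounded group distance. The paper itself sidesteps all of this: it obtains the covering property from uniformity of $N$ together with the equivalence of the quasi-norm and Euclidean topologies, and delegates admissibility to the Euclidean argument of Borup--Nielsen, so your attempt to make these steps self-contained has isolated a real subtlety --- but the specific lemma you propose to carry them is not true, and a correct argument must track the radial distortion of $\delta_\beta$ separately from the group-metric displacement.
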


\begin{proof}
Since the statement about $\alpha = 0$ is clear and already justified previously, we will henceforth assume that $0 < \alpha < 1$. The topology induced on $\mathbb{R}^n$ by the balls with respect to the homogeneous quasi-norm $\|\cdot\|$ is equivalent to the usual Euclidean topology by \cite[Proposition 3.1.37]{Quantization_on_Nilpotent_Lie_Groups}. Hence since $N$ is uniform we have that there exists $r_1 > 0$ such that the covering $\mathcal{Q}_{r}^{\alpha}(G;N)$ is a covering for all $r > r_1$. The argument that $\mathcal{Q}_{r}^{\alpha}(G;N)$ is admissible is the same as in the Euclidean case and is given in \cite[Lemma 2.5 and Theorem 2.6]{BN1}. We can duplicate the proof of Lemma \ref{uniform_lemma} to deduce all the properties needed for $\mathcal{Q}_{r}^{\alpha}(G;N)$ to be a generalized $\alpha$-covering except for the proof of condition \eqref{size_of_coverings}. To show this, we need to make a few estimates: \par
If we let $Q$ denote the homogeneous dimension of $(\mathbb{R}^n,*_G)$, then we have
\begin{equation}
\label{first_equation_spesific_covering}
    \Big|B^{\|\cdot\|}(\delta_{\beta}(k),r\|k\|^\beta)\Big|  =  \Big|B^{\|\cdot\|}(0,r\|k\|^\beta)\Big|  = \Big|D_{r\|k\|^\beta}^{G}B^{\|\cdot\|}(0,1)\Big| \asymp (r\|k\|^\beta)^{Q} \asymp  \|k\|^{\frac{\alpha Q}{1-\alpha}},
\end{equation}
by the left-invariance and dilation properties of the Haar measure. By picking the center point $\delta_{\beta}(k)$ in the ball $B^{\|\cdot\|}(\delta_{\beta}(k),r\|k\|^\beta)$ we see that
\begin{equation}
\label{second_equation_spesific_covering}
    \left(1 + \|\delta_{\beta}(k)\|^2\right)^{\frac{\alpha Q}{2}} = \left(1 + \|k\|^{2\beta + 2}\right)^{\frac{\alpha Q}{2}} = \left(1 + \|k\|^{\frac{2}{1-\alpha}}\right)^{\frac{\alpha Q}{2}}.
\end{equation}
Since we have excluded zero from the lattice $N$ the estimate $\|k\|^{\frac{2}{1-\alpha}} \asymp 1 + \|k\|^{\frac{2}{1-\alpha}}$ is valid. Comparing this observation with \eqref{first_equation_spesific_covering} and \eqref{second_equation_spesific_covering} shows that the covering $\mathcal{Q}_{r}^{\alpha}(G;N)$ is a generalized $\alpha$-covering since \[\Big|B^{\|\cdot\|}(\delta_{\beta}(k),r\|k\|^\beta)\Big|^{\frac{2}{\alpha Q}} \asymp \|k\|^{\frac{2}{1-\alpha}} \asymp 1 + \|k\|^{\frac{2}{1-\alpha}} = 1 + \|\delta_{\beta}(k)\|^2.\] \par 
In the definition of a generalized $\alpha$-covering, we need the above estimate for every $\xi_k \in B^{\|\cdot\|}(\delta_{\beta}(k),r\|k\|^\beta)$ and not only the center point $\delta_{\beta}(k)$. This follows from a straightforward computation using that \[\|\delta_{\beta}(k)\| = \|\delta_{\beta}(k) - \xi_k + \xi_k\| \leq C(r\|k\|^{\beta} + \|\xi_k\|),\] where $C > 0$ is the constant appearing in \eqref{quasi_norm_constant}. 
\end{proof}
 
\subsubsection{The Dyadic Case: $\alpha = 1$}
\label{sec: Dyadic_Case}

The covering given in Proposition \ref{explicit_alpha_covering_prop} is clearly not well-defined for $\alpha = 1$. We will give a concrete example of a generalized $1$-covering that models the classical dyadic intervals underlying the Besov spaces given in Example \ref{dyadic_covering_example}.

\begin{definition}
\label{definition_Besov_covering}
Let $(\mathbb{R}^n,*_G,\|\cdot\|)$ be a stratified Lie group. The covering $\mathcal{B}(G) = \{D_{m}(G)\}_{m \in \mathbb{N}_0}$ given by \[D_{0}(G) = B^{\|\cdot\|}(0,2), \quad D_{m}(G) = B^{\|\cdot\|}(0,2^{m+1}) \setminus \overline{B^{\|\cdot\|}(0,2^{m-1})}, \quad m \in \mathbb{N},\] 
is called the \textit{Besov covering} with respect to the homogeneous quasi-norm $\|\cdot\|$ on the stratified Lie group $(\mathbb{R}^n,*_G)$.
\end{definition}

The fact that the homogeneous quasi-norm $\|\cdot\|$ is not part of the notation $\mathcal{B}(G)$ will be justified in Lemma \ref{Besov_covering_lemma}. The Besov covering is an admissible covering consisting of open and connected sets. Hence it is a concatenation by Lemma \ref{lemma_about_basic_properties_of_coverings}. The most important property of the covering $\mathcal{B}(G)$ is the scaling invariance \[D_{2^k}^{G}D_{m}(G) = D_{m+k}(G), \quad m \geq 1, \, k \geq 0.\] \par
For an arbitrary homogeneous quasi-norm $\|\cdot\|$, one can not assure that $(2^m, 0,\dots, 0) \in D_{m}(G)$ for $m \geq 1$. Although this is not a serious obstacle, we can fix this by using the homogeneous quasi-norm
\begin{equation}
\label{spesific_homogeneous_quasi_norm}
    \|(x_1, \dots,x_n)\|_{2} := \left(\sum_{j = 1}^{n}|x_j|^{\frac{2}{v_j}}\right)^{\frac{1}{2}},
\end{equation}
where $v_j$ denotes the degree of $x_j$. We emphasize that we denote the usual Euclidean norm by $\|\cdot\|_E$ to distinguish it from the homogeneous quasi-norm $\|\cdot\|_{2}$ in \eqref{spesific_homogeneous_quasi_norm}. If $(\mathbb{R}^n,*_G)$ has rank $k$ then \[\|(x_1, \dots, x_k ,0, \dots, 0)\|_{2} = \|(x_1, \dots, x_k ,0, \dots, 0)\|_{E}.\] With the homogeneous quasi-norm \eqref{spesific_homogeneous_quasi_norm} it is clear that \begin{equation}
\label{elemetns_in_dyadic_covering}
    (2^{m},0,\dots,0) \in D_{m}(G), \quad m \in \mathbb{N}_0.
\end{equation}
Moreover, the group structure between the elements in \eqref{elemetns_in_dyadic_covering} is the same as the Euclidean addition since they are in the first layer $V_1$. The following lemma shows that fixing the homogeneous quasi-norm $\|\cdot\|_{2}$ is justified and that the Besov covering is a concrete realization of a generalized $1$-covering.

\begin{lemma}
\label{Besov_covering_lemma}
Assume that $n > 1$. The Besov covering $\mathcal{B}(G)$ is a $1$-covering for the stratified Lie group $(\mathbb{R}^n,*_G)$ independently of the choice of homogeneous quasi-norm.
\end{lemma}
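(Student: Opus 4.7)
The plan is to verify the three defining properties of a generalized $1$-covering for $\mathcal{B}(G)$ with respect to one fixed homogeneous quasi-norm $\|\cdot\|$, and then to invoke Proposition \ref{proposition_about_equivalence} to deduce that the resulting covering, up to equivalence, is independent of this choice.

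First I would establish that $\mathcal{B}(G)$ is an admissible covering consisting of open and connected sets. Openness is immediate from continuity of $\|\cdot\|$. Admissibility is trivial: since $D_m(G) \subset \{x : \|x\| < 2^{m+1}\}$ for $m \geq 0$ and $D_l(G) \subset \{x : \|x\| > 2^{l-1}\}$ for $l \geq 1$, the intersection $D_m(G) \cap D_l(G)$ is empty whenever $|m-l| \geq 2$, giving admissibility constant at most $3$. Connectedness is the step where the hypothesis $n > 1$ enters. For any $c > 0$ the quasi-norm sphere $S_c := \{x \in \mathbb{R}^n : \|x\| = c\}$ is the image of $\mathbb{R}^n \setminus \{0\}$ under the continuous map $x \mapsto D^{G}_{c/\|x\|}(x)$; since $\mathbb{R}^n \setminus \{0\}$ is path-connected for $n > 1$, each $S_c$ is path-connected. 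Each $D_m(G)$ for $m \geq 1$ is the union of the spheres $S_c$ for $c \in (2^{m-1},2^{m+1})$, and any point $x \in D_m(G)$ is joined to $S_{2^m}$ by the path $s \mapsto D^{G}_{s}(x)$ with $s$ ranging between $1$ and $2^m/\|x\|$; this path has quasi-norm $s\|x\|$ taking values in the interval spanned by $\|x\|$ and $2^m$, hence stays inside $D_m(G)$. Thus $D_m(G)$ is path-connected; the case $D_0(G)$ is a ball handled exactly as in Lemma \ref{uniform_lemma}.

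Next I would verify the size condition \eqref{size_of_coverings} and the ratio condition \eqref{ratio_condition} with $\alpha = 1$. Setting $\mu := |B^{\|\cdot\|}(0,1)|$ and using $|B^{\|\cdot\|}(0,R)| = R^{Q}\mu$, one finds $|D_m(G)| = \mu (2^{Q} - 2^{-Q}) \cdot 2^{Qm}$ for $m \geq 1$ and $|D_0(G)| = 2^{Q}\mu$. For $\xi \in D_m(G)$ with $m \geq 1$ we have $\|\xi\| \asymp 2^m$, hence $(1 + \|\xi\|^2)^{Q/2} \asymp 2^{Qm} \asymp |D_m(G)|$, while for $\xi \in D_0(G)$ both sides lie between absolute constants. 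For the ratio condition, the scaling identity $D^{G}_{2^{m-1}}(D_1(G)) = D_m(G)$ reduces the estimate for $m \geq 1$ to a single observation: since $D_1(G)$ is non-empty and open, it contains some ball $B^{\|\cdot\|}(c_1, r_1)$, and applying $D^{G}_{2^{m-1}}$ (which maps $\|\cdot\|$-balls to $\|\cdot\|$-balls of scaled radius $2^{m-1}$) yields $B^{\|\cdot\|}(D^{G}_{2^{m-1}}(c_1),\, 2^{m-1} r_1) \subset D_m(G)$. Combined with the outer inclusion $D_m(G) \subset B^{\|\cdot\|}(0, 2^{m+1})$ this gives $R^{\|\cdot\|}(D_m(G))/r^{\|\cdot\|}(D_m(G)) \leq 4/r_1$, uniformly in $m \geq 1$; the case $m = 0$ is trivial since $D_0(G)$ is itself a ball.

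Finally, applying Proposition \ref{proposition_about_equivalence} to the Besov coverings associated to two choices of homogeneous quasi-norm shows that they are equivalent, which justifies suppressing the quasi-norm from the notation $\mathcal{B}(G)$. The only non-routine step is the path-connectedness of the annular shells $D_m(G)$ for $m \geq 1$, which genuinely uses $n > 1$ and is the technical reason the statement excludes the one-dimensional case; the remaining verifications are direct consequences of the homogeneity of Lebesgue measure under $D^{G}_r$ and the scaling identity $D^{G}_{2^{m-1}}(D_1(G)) = D_m(G)$.
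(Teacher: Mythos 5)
Your proof is correct and follows essentially the same route as the paper: verify the size condition via $|B^{\|\cdot\|}(0,R)| = R^{Q}\mu$ and the ratio condition for the annuli, then invoke Proposition \ref{proposition_about_equivalence} to dispose of the dependence on the homogeneous quasi-norm. The only differences are minor improvements in rigor: you supply an explicit path-connectedness argument for the shells $D_m(G)$ (which the paper merely asserts after Definition \ref{definition_Besov_covering}), and you derive the inradius bound from the scaling identity $D^{G}_{2^{m-1}}(D_1(G)) = D_m(G)$ rather than from a ball centered at $(2^m,0,\dots,0)$, thereby avoiding any implicit appeal to a triangle inequality for the quasi-norm.
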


\begin{proof}
We first work with the homogeneous quasi-norm $\|\cdot\|_{2}$ given in \eqref{spesific_homogeneous_quasi_norm}. Let us begin by checking that the ratio property \eqref{ratio_condition} is satisfied: For $m = 0$ we obviously have $r^{\|\cdot\|_2}(D_{0}(G)) = R^{\|\cdot\|_2}(D_{0}(G))$ and hence the ratio is one. For $m \geq 1$ we claim that we have the estimates \[R^{\|\cdot\|_{2}}(D_{m}(G)) \leq 2^{m+1}, \qquad r^{\|\cdot\|_{2}}(D_{m}(G)) \geq 2^{m-1}.\] The first is clear from the definition of $R^{\|\cdot\|_{2}}(D_{m}(G))$ while the second follows from considering a ball centered at the point $c_m = (2^{m}, \dots, 0)$. In conclusion, this gives 
\[\sup_{m\in \mathbb{N}} \frac{R^{\|\cdot\|_{2}}\left(D_{m}(G)\right)}{r^{\|\cdot\|_{2}}\left(D_{m}(G)\right)} \leq \sup_{m\in \mathbb{N}} \frac{2^{m+1}}{2^{m-1}} = 4.\] \par
To see that the size condition \eqref{size_of_coverings} is satisfied, we denote by $\mu := |B^{\|\cdot\|_{2}}(0,1)|$ and estimate for $m \geq 1$ that 
\begin{align*}
    |D_{m}(G)| & = \left|B^{\|\cdot\|_{2}}(0,2^{m+1})\right| - \left|B^{\|\cdot\|_{2}}(0,2^{m-1})\right| \\
               & = \left|D_{2^{m+1}}^{G}B^{\|\cdot\|_{2}}(0,1)\right| - \left|D_{2^{m-1}}^{G}B^{\|\cdot\|_{2}}(0,1)\right| \\ 
               & = \left(\mu \frac{4^{Q} - 1}{2^Q}\right)2^{Qm}.
\end{align*}
On the other hand, for any $\xi_m \in D_{m}(G)$ we have $\|\xi_m\| \asymp 2^{m}$ and hence \[(1 + \|\xi_m\|_{2}^{2})^{\frac{Q}{2}} \asymp 2^{Qm}.\] 
Combining these estimates shows that the Besov covering $\mathcal{B}(G)$ with the homogeneous quasi-norm $\|\cdot\|_{2}$ is a $1$-covering. Then we can apply Proposition \ref{proposition_about_equivalence} and obtain that the choice of homogeneous quasi-norm defining the Besov covering $\mathcal{B}(G)$ is irrelevant as they all produce equivalent coverings. Hence we can safely use the homogeneous quasi-norm $\|\cdot\|_2$ given in \eqref{spesific_homogeneous_quasi_norm} without loss of generality. When $n = 1$ we have that $(\mathbb{R},*_G) \simeq (\mathbb{R},+)$. In that case, we refer the reader to Example \ref{dyadic_covering_example} for a trivial modification of the result.
\end{proof}

Let $(\mathbb{R}^n,*_{G})$ be a rational stratified Lie group with a lattice $N$.  We will use the notation
\begin{equation}
\label{explicit_coverings_cases}
    \mathcal{Q}^{\alpha}(G) := \mathcal{Q}^{\alpha}(G;N) = \begin{cases} 
\mathcal{U}(G;N), & \textrm{if } \alpha = 0 \\ 
\mathcal{Q}_{r}^{\alpha}(G;N), & \textrm{if } 0 < \alpha < 1 \\ 
\mathcal{B}(G), & \textrm{if } \alpha = 1
\end{cases},
\end{equation}
where the number $r > 0$ is chosen large enough so that $\mathcal{Q}_{r}^{\alpha}(G;N)$ is a concatenation. The specific value of $r > 0$ needed will be suppressed as it is of no relevance in our augments. 

\begin{remark}
We have showed that the concatenation $\mathcal{Q}^{\alpha}(G)$ depends (up to equivalence of coverings) only on the parameter $0 \leq \alpha \leq 1$ and the stratified Lie group $(\mathbb{R}^n,*_G)$. We can say even more by introducing the following terminology: The \textit{growth vector} of a stratified Lie group $(\mathbb{R}^n,*_G)$ is the multi-index \[\mathfrak{G}(G) := (n_{1}, \dots, n_{s}), \qquad n_{i} := \textrm{dim}(V_i), \quad i = 1, \dots , s,\] where $V_i$ are as in \eqref{Lie_algebra_decomposition}. If $(\mathbb{R}^n,*_G)$ and $(\mathbb{R}^n,*_H)$ are two stratified Lie groups with $\mathfrak{G}(G) = \mathfrak{G}(H)$ then the homogeneous quasi-norm $\|\cdot\|_{2}$ given in \eqref{spesific_homogeneous_quasi_norm} are equal for both $(\mathbb{R}^n,*_G)$ and $(\mathbb{R}^n,*_H)$. Moreover, they clearly have the same homogeneous dimension as well. Hence by using the homogeneous quasi-norm $\|\cdot\|_2$ we see from Definition \ref{definition_alpha_coverings} that a generalized $\alpha$-covering for $(\mathbb{R}^n,*_G)$ is also a generalized $\alpha$-covering for $(\mathbb{R}^n,*_H)$ and vice versa. From this we can conclude from Proposition \ref{proposition_about_equivalence} that any generalized $\alpha$-covering $\mathcal{P}^{\alpha}$ can be described by two parameters: The continuous parameter $0 \leq \alpha \leq 1$ and the discrete parameter $\mathfrak{G}(G) \in \mathbb{N}_{0}^{s}$ where $s$ is the step of $(\mathbb{R}^n,*_G)$. 
\end{remark}

\subsection{Almost Structured Coverings and BAPU's}
\label{sec: Almost_Structured_Coverings}

Many coverings that arise in practice have the property that its elements are essentially given by well-behaved affine transformations of a few reference sets. This notion was studied in \cite{BN3} and the following definition is a slight generalization appearing in \cite{Felix_main}.

\begin{definition}
\label{almost_structured_definition}
Let $\mathcal{Q} = (Q_i)_{i \in I}$ be an admissible covering on $\mathbb{R}^n$. We call $\mathcal{Q}$ an \textit{almost structured covering} if there exists a finite collection $(\mathcal{P}_{s})_{s \in J}$ of bounded, open subsets of $\mathbb{R}^n$ called \textit{reference sets} with the following properties: 
\begin{itemize}
    \item There is an invertible affine transformation $A_i = T_i + b_i$ for every $i \in I$ with $T_i \in GL(n,\mathbb{R})$ and $b_i \in \mathbb{R}^n$ such that 
    \begin{equation}
    \label{affine_transformation}
        Q_i = A_{i}(P_s) = T_{i}(P_s) + b_i,
    \end{equation} for some $s \in J$ depending on $i \in I$.
    \item If $Q_i \cap Q_j \neq \emptyset$ for some $i,j \in I$ then we have the uniform compatibility condition 
    \begin{equation}
    \label{compatibility_estimate_matricies}
        \|T_{i}^{-1}T_{j}\| \leq \mathcal{C}_{Q} < \infty,
    \end{equation}
    where $\mathcal{C}_{Q}$ does not depend on $i,j \in I$.
    \item There should exists a finite collection $(P_{s}^{'})_{s \in J}$ of open sets with $\overline{P_{s}^{'}} \subset P_{s}$ for every $s \in J$ such that $(A_{i}(P_{s}^{'}))_{i \in I, s \in J}$ cover $\mathbb{R}^n$.
\end{itemize}
If the index set $J = \{s\}$ is a singleton, then the covering $\mathcal{Q}$ is called a \textit{structured covering}.
\end{definition}

\begin{remark}
\begin{itemize}
    \item The elements in a almost structured covering $\mathcal{Q} = (Q_i)_{i \in I}$ are automatically open by \eqref{affine_transformation}. Hence the index set $I$ is always countable and $\mathcal{Q}$ is a concatenation by Lemma \ref{lemma_about_basic_properties_of_coverings}. The reason one needs to consider almost structured coverings rather than structured coverings can be seen from the dyadic covering $\mathcal{B}(\mathbb{R}^2)$ given in Example \ref{dyadic_covering_example}. 
    \item We would also like to point out that structured or almost structured coverings are not  preserved under equivalence of coverings: It is straightforward to construct an equivalent covering to, say, the uniform covering $\mathcal{U}(\mathbb{R}^2)$ that is not even almost structured. Hence questions such as \textquote{are all generalized $\alpha$-coverings corresponding to a stratified Lie group $(\mathbb{R}^n,*_G)$ almost structured?} are not well-defined. To ask meaningful questions, we will have to consider the specific representative coverings given in \eqref{explicit_coverings_cases}.  
\end{itemize}
\end{remark}

The following lemma is proved in \cite[Lemma 9.3]{Felix_main} and shows that the standard realization of the Euclidean $\alpha$-coverings considered in the literature are indeed almost structured coverings.

\begin{lemma}
The coverings $\mathcal{Q}^{\alpha}(\mathbb{R}^{n};\mathbb{Z}^n)$ are structured for $0 \leq \alpha < 1$, while the coverings $\mathcal{B}(\mathbb{R}^n)$ for $n \geq 2$ are only almost structured.
\end{lemma}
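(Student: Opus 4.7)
The plan is to handle the two coverings separately: first exhibit explicit affine structure on $\mathcal{Q}^{\alpha}(\mathbb{R}^n;\mathbb{Z}^n)$ to establish it is structured, and then use a convexity obstruction to prove that $\mathcal{B}(\mathbb{R}^n)$ is not structured while showing it is almost structured via two reference sets.

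For $\mathcal{Q}^{\alpha}(\mathbb{R}^n;\mathbb{Z}^n)$ with $0\leq \alpha<1$, I would declare a single reference set $P=B(0,1)$ and, for each $k\in \mathbb{Z}^n\setminus\{0\}$, define the affine map $A_k(x)=T_k x+b_k$ with $T_k=r\|k\|_E^{\beta}\cdot I$ and $b_k=\|k\|_E^{\beta}k$, where $\beta=\alpha/(1-\alpha)$ (so $\beta=0$ is the $\alpha=0$ case, for which $A_k$ is a translation by $k$ and $T_k=rI$). Then $A_k(P)$ is exactly the ball $B(\|k\|_E^\beta k,r\|k\|_E^\beta)$, matching the definition in \eqref{explicit_alpha_covering} with the Euclidean quasi-norm. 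For the compatibility estimate \eqref{compatibility_estimate_matricies}, observe that $T_k^{-1}T_l=(\|l\|_E/\|k\|_E)^\beta I$, so I only need to know that neighboring indices give comparable lattice norms; this is immediate from the standard fact (used in the proof of Proposition \ref{explicit_alpha_covering_prop} via \cite[Lemma 2.5]{BN1}) that $B(\|k\|_E^\beta k,r\|k\|_E^\beta)\cap B(\|l\|_E^\beta l,r\|l\|_E^\beta)\neq\emptyset$ forces $\|k\|_E\asymp\|l\|_E$. The third clause of Definition \ref{almost_structured_definition} is satisfied by taking $P'=B(0,1-\varepsilon)$ for $\varepsilon>0$ small enough that the shrunken balls $A_k(P')$ still cover $\mathbb{R}^n$; the existence of such an $\varepsilon$ uses only that the original covering is open and the index set was chosen dense enough, i.e.\ by picking the parameter $r$ strictly larger than the threshold from Proposition \ref{explicit_alpha_covering_prop}.

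For $\mathcal{B}(\mathbb{R}^n)$ with $n\geq 2$, I would produce two reference sets $P_1=B(0,2)$ and $P_2=\{x\in\mathbb{R}^n:1<\|x\|_E<4\}$. For $m=0$ set $A_0=\mathrm{id}$ and use $P_1$; for $m\geq 1$ set $A_m=2^{m-1}I$ and use $P_2$, so that $A_m(P_2)=\{x:2^{m-1}<\|x\|_E<2^{m+1}\}=D_m$. The compatibility condition is trivial since $D_m\cap D_l\neq\emptyset$ forces $|m-l|\leq 1$, and then $\|T_m^{-1}T_l\|=2^{l-m}\leq 2$. For the shrunken sets one can take $P_1'=B(0,2-\varepsilon)$ and $P_2'=\{1+\varepsilon<\|x\|_E<4-\varepsilon\}$ with $\varepsilon>0$ small enough (e.g.\ so that $A_m(P_2')$ for consecutive $m$ still overlap and cover $\mathbb{R}^n\setminus A_0(P_1')$).

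For the statement that $\mathcal{B}(\mathbb{R}^n)$ is \emph{not} structured, I would argue by contradiction: suppose a single reference set $P$ works. Since $A_0(P)=D_0$ must be a ball and $A_1(P)=D_1$ must be an annulus, and affine transformations preserve convexity, the convex set $D_0$ forces $P$ to be convex while the non-convex set $D_1$ forces $P$ to be non-convex, a contradiction. I expect the only subtle step to be verifying the third clause of Definition \ref{almost_structured_definition} for the Besov covering near the boundary between $D_0$ and the annular pieces, but this is a routine calculation in the radial coordinate and presents no genuine difficulty.
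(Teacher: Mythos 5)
Your proof is correct, and it is essentially the argument the paper itself uses: the lemma is only cited to \cite[Lemma 9.3]{Felix_main} in the text, but the paper's proof of the generalized statement (Proposition \ref{almost_structured_coverings_result}) proceeds exactly as you do, with the single reference ball and the affine maps $x \mapsto \|k\|^{\beta}k + D_{r\|k\|^{\beta}}(x)$ for the $\alpha$-coverings, the two reference sets $D_0$, $D_1$ with dyadic dilations for the Besov covering, and the convexity obstruction (a ball versus an annulus under invertible affine maps) to rule out a single reference set. All the details you supply --- the comparability $\|k\|_E \asymp \|l\|_E$ for intersecting balls and the shrinking argument using $r$ strictly above the covering threshold --- are the standard ones and check out.
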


Hence one might expect that the coverings $\mathcal{Q}^{\alpha}(G)$ are all at least almost structured for any rational stratified Lie group $(\mathbb{R}^n,*_G)$. This is supported by the fact that a small modification of \cite[Proposition 6.2]{David} shows that the uniform covering $\mathcal{U}(\mathbb{H}_n; N)$ is a structured covering where $\mathbb{H}_n$ is the Heisenberg group and $N$ is the lattice $N = (2\mathbb{Z})^{2n} \times \mathbb{Z}$. However, the following proposition shows that this is not true in general and depends on the step of the stratified Lie group in question.

\begin{proposition}
\label{almost_structured_coverings_result}
Let $(\mathbb{R}^n,*_G)$ be a stratified Lie group where $n > 1$. \begin{itemize}
    \item The Besov covering $\mathcal{B}(G)$ is an almost structured covering that is never structured unless the group $(\mathbb{R}^n,*_G)$ is isomorphic to $(\mathbb{R},+)$.
    \item Assume that $(\mathbb{R}^n,*_G)$ is rational and let $N$ be a lattice. The coverings $\mathcal{Q}^{\alpha}(G;N)$ are structured for $0 \leq \alpha < 1$ whenever the step of $(\mathbb{R}^n,*_G)$ is less than or equal two. However, the coverings $\mathcal{Q}^{\alpha}(G;N)$ for $0 \leq \alpha < 1$ are not necessarily almost structured whenever the step of $(\mathbb{R}^n,*_G)$ is higher than two.
\end{itemize}
\end{proposition}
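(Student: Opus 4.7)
My plan is to handle the two bullets separately; the structured-type assertions reduce to showing certain maps on $\mathbb{R}^n$ are affine, which follows from the BCH formula \eqref{BCH-formula} and the linearity of dilations in adapted coordinates, while the negative assertion in step $>2$ is the real obstacle.

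For $\mathcal{B}(G)$, I would take two reference sets $P_1 := B^{\|\cdot\|_2}(0, 2)$ and $P_2 := B^{\|\cdot\|_2}(0, 2) \setminus \overline{B^{\|\cdot\|_2}(0, 1/2)}$, modelling $D_0(G)$ and the dyadic shells respectively. In the adapted basis each dilation $D_r^G$ is the diagonal linear map $(x_j) \mapsto (r^{v_j} x_j)$ and is therefore affine; with $A_0 := \mathrm{id}$ and $A_m := D_{2^m}^G$ for $m \geq 1$, the scaling identity $D_{2^k}^G D_m(G) = D_{m+k}(G)$ yields $D_m(G) = A_m(P_{s(m)})$, and \eqref{compatibility_estimate_matricies} is immediate since $D_m(G) \cap D_{m'}(G) \neq \emptyset$ forces $|m - m'| \leq 1$, so that $T_m^{-1} T_{m'} = D_{2^{m'-m}}^G$ has uniformly bounded operator norm. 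Shrunken sets $P_j'$ whose $A_m$-images still cover $\mathbb{R}^n$ are easy to exhibit. For non-structuredness under $n > 1$, I would use that affine bijections are homeomorphisms and compare topologies: $D_0(G)$ is homeomorphic to the Euclidean open ball and is contractible, whereas each $D_m(G)$ for $m \geq 1$ deformation-retracts onto the quasi-sphere and is homotopy equivalent to $S^{n-1}$, which is not contractible for $n \geq 2$. No single reference set can serve both, so structuredness fails.

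For $\mathcal{Q}^{\alpha}(G; N)$ with $0 \leq \alpha < 1$ in the step $\leq 2$ regime, the BCH expansion terminates after the commutator term and $L_g(y) = g + y + \tfrac{1}{2}[g, y]$ is affine in $y$, with linear part $I + \tfrac{1}{2}\mathrm{ad}_g$ and translation $g$. I would take the single reference set $P := B^{\|\cdot\|}(0, 1)$ and define $A_k := L_{\delta_\beta(k)} \circ D_{r\|k\|^\beta}^G$ for $k \in N \setminus \{0\}$; by left-invariance and dilation homogeneity of the quasi-norm, $A_k(P)$ coincides with the ball appearing in \eqref{explicit_alpha_covering}, and $A_k$ is affine as a composition of affine maps. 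For the compatibility estimate I would exploit the step-two identity $\mathrm{ad}_X^2 = 0$ to get $(I + \tfrac{1}{2}\mathrm{ad}_g)^{-1} = I - \tfrac{1}{2}\mathrm{ad}_g$, then expand $T_k^{-1} T_{k'}$ as an explicit polynomial in $\mathrm{ad}_k$, $\mathrm{ad}_{k'}$ and in the scale ratio $\|k'\|^\beta / \|k\|^\beta$. Overlap of the two balls forces $\|k\| \asymp \|k'\|$ by \eqref{size_of_coverings}, while the coordinate bound $|y_j| \leq \|y\|_2^{v_j}$ coming from \eqref{spesific_homogeneous_quasi_norm} controls the $V_1$-component of $k^{-1} *_G k'$, which is the only piece $\mathrm{ad}$ is sensitive to; this closes the uniform estimate.

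The hard part is the negative claim in step $>2$, where a single counterexample suffices. I would take the Engel group with Lie algebra $\mathrm{span}(X_1, X_2, X_3, X_4)$ and brackets $[X_1, X_2] = X_3$, $[X_1, X_3] = X_4$, stratified as $V_1 = \mathrm{span}(X_1, X_2)$, $V_2 = \mathrm{span}(X_3)$, $V_3 = \mathrm{span}(X_4)$; this is rational and of step three. The BCH formula \eqref{BCH-formula} now produces a term $-\tfrac{1}{12}[y, [n, y]]$ that is genuinely quadratic in $y$ with coefficient a non-trivial linear function of the $V_1$-component of $n$, so the balls $B^{\|\cdot\|}(n, 1) = L_n(B^{\|\cdot\|}(0, 1))$ are images of the unit quasi-ball under a polynomial map whose quadratic part has magnitude growing with $\|n\|$. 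To rule out a finite family of affine types, I would isolate an affine invariant of these images (for instance a normalized coefficient in the defining polynomial of the boundary at a distinguished point) and show that it takes unboundedly many values over $n \in N$. A pigeonhole argument would then force infinitely many $B^{\|\cdot\|}(n, 1)$ to be mutually affinely equivalent, contradicting the unboundedness of the invariant. Rigorously extracting such an invariant from the group-theoretic data and ruling out any finite-parameter family of affine types is the technical crux of the proposition.
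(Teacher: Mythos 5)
Your proposal tracks the paper's proof in all three parts: the same reference sets $D_0(G), D_1(G)$ and diagonal dilation matrices for $\mathcal{B}(G)$, the same affine maps $A_k = L_{\delta_\beta(k)}\circ D^G_{r\|k\|^\beta}$ for the step-two case, and the same Engel-type group with brackets $[X_1,X_2]=X_3$, $[X_1,X_3]=X_4$ for the counterexample. Two points of genuine divergence are worth recording. First, for the non-structuredness of $\mathcal{B}(G)$ the paper argues that $D_0(G)$ is convex while the shells $D_m(G)$, $m\geq 1$, are not; you instead use contractibility versus the homotopy type of $S^{n-1}$. Both are invariants under invertible affine maps, but yours is the safer choice: convexity of a ball for an arbitrary homogeneous quasi-norm (e.g.\ $\|\cdot\|_2$ with some $v_j\geq 3$) is not automatic, whereas the dilation flow $t\mapsto D_t^G(x)$ always contracts $D_0(G)$ to the origin and always retracts each shell onto the quasi-sphere. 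Second, you actually carry out the compatibility estimate \eqref{compatibility_estimate_matricies} in the step-two case via $\mathrm{ad}_g^2=0$ and the identity $(I-\tfrac12\mathrm{ad}_a)(I+\tfrac12\mathrm{ad}_b)=I+\tfrac12\mathrm{ad}_{b-a}$, which the paper explicitly leaves to the reader; this is a correct completion of that step. The only place you fall short of a proof is the one you flag yourself: extracting the affine invariant for the Engel group. The paper closes this more directly than you anticipate: assuming a single reference set $B$ with $A_k(B)=B^{\|\cdot\|}(k,R)$, intersecting the two representations gives $T_{k'}T_k^{-1}B^{\|\cdot\|}(k,R)+(b_{k'}-T_{k'}T_k^{-1}b_k)=B^{\|\cdot\|}(k',R)$, and along $k=(12,2,1,1)$, $k'=(12n,2n,n,n)$ the coefficient of the genuinely quadratic term $\tfrac{x_1}{12}(x_1y_2-x_2y_1)$ in the translation by $k'$ grows linearly in $n$ while the left-hand side is an affine image of one fixed set; the "normalized coefficient at a distinguished boundary point" you propose is essentially just the first coordinate of the lattice point, which is unbounded along this sequence. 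The pigeonhole extension from structured to almost structured is identical in both arguments. So your route is essentially the paper's, with one sub-argument improved and one left at (roughly) the same level of terseness as the original.
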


\begin{proof}
For the Besov covering $\mathcal{B}(G) = (D_{m}(G))_{m = 0}^{\infty}$ given in Definition \ref{definition_Besov_covering} we consider $D_{0}(G)$ and $D_{1}(G)$ as the reference sets. Define the matrices \[A_{m} = T_{m} := \begin{pmatrix}
    2^{(m-1) \cdot v_1} & & \\
    & \ddots & \\
    & & 2^{(m-1) \cdot v_n}
\end{pmatrix}, \qquad v_j := \textrm{deg}(x_j), \quad m \geq 1.\]
By setting $A_0$ to be the identity matrix we then have that $A_{0}(D_{0}(G)) = D_{0}(G)$ and $A_{m}(D_{1}(G)) = D_{m}(G)$ since \[A_{m}D_{1}(G) = D_{2^{m-1}}^{G}D_{1}(G) = D_{m}(G).\]
Notice that two elements $D_{m}(G)$ and $D_{l}(G)$ can only intersect when $m \in \{l-1,l,l+1\}$. In any case, a straightforward computation gives that \[\|T_{l}^{-1}T_{m}\|  \leq 2^{v_n},\] and the estimate \eqref{compatibility_estimate_matricies} is satisfied. Finally, the last requirement in Definition \ref{almost_structured_definition} is clearly satisfied by shrinking $D_{0}(G)$ and $D_{1}(G)$ slightly. The Besov covering $\mathcal{B}(G)$ is not a structured covering when $n > 1$ since $D_{0}(G)$ is convex while the sets $D_{m}(G)$ for $m \geq 1$ are not. When $n = 1$ the stratification \eqref{Lie_algebra_decomposition} has only one layer and hence $(\mathbb{R},*_G) \simeq (\mathbb{R},+)$. It is clear from the construction given in Example \ref{Besov_Covering} that the modification of the covering $\mathcal{B}(\mathbb{R})$ given by dividing each of the sets $D_{m}(\mathbb{R})$ for $m \in \mathbb{N}$ into its connected components is structured. \par 
Let us now turn to the second statement. If $(\mathbb{R}^n, *_G)$ has step one, then we are in the Euclidean setting and the result follows from Proposition \ref{almost_structured_coverings_result}. Assume that $(\mathbb{R}^n, *_G, \|\cdot\|)$ has step two and write $\mathbb{R}^n = \mathbb{R}^{k} \oplus \mathbb{R}^l$ with $l = n - k$ according to the decomposition given in \eqref{Lie_algebra_decomposition}. For $(a,b), (c,d) \in \mathbb{R}^{k} \oplus \mathbb{R}^l$ we can use the BCH-formula \eqref{BCH-formula} to write their product as 
\[(a,b) *_{G} (c,d) = \left(a + c, b + d + \frac{1}{2}P(a,c)\right),\] where $P(a,c)$ is a linear polynomial in the components of $a$ and $c$. This can be written as the block-matrix equation \[(a,b) *_{G} (c,d) = \begin{pmatrix}I_{k \times k} & 0_{l \times k} \\ \rho(c) & I_{l \times l}\end{pmatrix} \begin{pmatrix} a \\ b \end{pmatrix} + \begin{pmatrix} c \\ d \end{pmatrix},\]
where $\rho(c) \in M_{l \times k}(\mathbb{R})$, each of the entries in $\rho(c)$ depend linearly on the components of $c$, and $\rho(c) \cdot a = P(a,c)$. Consider now the covering $\mathcal{Q}^{\alpha}(G;N)$ for $0 \leq \alpha < 1$ and write each element as \[B^{\|\cdot\|}\left(\delta_{\beta}(k),r\|k\|^\beta\right) = \|k\|^{\frac{\alpha}{1-\alpha}}k *_{G}\left( D_{r\|k\|^{\frac{\alpha}{1-\alpha}}}^{G}B^{\|\cdot\|}(0,1)\right), \quad \delta_{\beta}(k) := \|k\|^{\beta}k, \quad \beta := \frac{\alpha}{1 - \alpha},\]
for every $k \in N \setminus \{0\}$. We set the reference set to be $B^{\|\cdot\|}(0,1)$ and leave it to the reader to show that the affine transformations \[A_{k}(x) = \|k\|^{\frac{\alpha}{1-\alpha}}k *_{G} D_{r\|k\|^{\frac{\alpha}{1-\alpha}}}^{G}(x) ,\qquad x \in \mathbb{R}^n, \quad k \in N \setminus \{0\},\] make $\mathcal{Q}^{\alpha}(G;N)$ into a structured covering. \par
Since $\mathcal{Q}^{\alpha}(G;N)$ is almost structured whenever $\mathcal{U}(G;N)$ is almost structured, it suffices find a stratified Lie group $(\mathbb{R}^n,*_G)$ such that $\mathcal{U}(G;N)$ is not almost structured. Consider the stratified Lie group $G$ whose Lie algebra $\mathfrak{g}$ is given by \[\mathfrak{g} = \textrm{span}\{X_1,X_2,X_3,X_4\},\] with bracket relations $[X_1,X_2] = X_3$ and $[X_1,X_3] = X_4$. This is a stratification where $V_1 = \textrm{span}\{X_1,X_2\}$, $V_2 = \textrm{span}\{X_3\}$ and $V_3 = \textrm{span}\{X_4\}.$ By using the BCH-formula \eqref{BCH-formula} we can identify $G$ with $(\mathbb{R}^4,*_G)$, where the multiplication between $(x_1,x_2,x_3,x_4)$ and $(y_1,y_2,y_3,y_4)$ has the form 
\begin{equation}
\label{Engel_multiplication}
   \left(x_1 + y_1, x_2 + y_2, x_3 + y_3 + \frac{1}{2}\left(x_1 y_2 - x_2 y_1\right), x_4 + y_4 + \frac{1}{2}\left(x_1 y_3 - x_3 y_1\right) + \frac{x_1}{12}\left(x_1 y_2 - x_2 y_1\right)\right).
\end{equation}
Consider the lattice $N = 12\mathbb{Z} \times 2 \mathbb{Z} \times \mathbb{Z} \times \mathbb{Z}$ in $(\mathbb{R}^4, *_G)$. Assume first that $\mathcal{Q}(G;N)$ is a structured covering and let $B$ be the reference set. Then for $k,k' \in N \setminus \{0\}$ we can find affine transformations $A_k$ and $A_{k'}$ such that \[A_{k}(B) = T_{k}(B) + b_k = B^{\|\cdot\|}(k,R), \quad A_{k'}(B) = T_{k'}(B) + b_{k'} = B^{\|\cdot\|}(k',R),\] where $R > 0$ is a fixed number so that $\mathcal{Q}(G;N)$ is an admissible covering. Then \[T_{k'}T_{k}^{-1}B^{\|\cdot\|}(k,R) + (b_{k'} - T_{k'}T_{k}^{-1}b_{k}) = B^{\|\cdot\|}(k',R).\] Hence if $k = (12,2,1,1)$ and $k' = (12n,2n,n,n)$ for $n \in \mathbb{N}$ we can increase $n$ and obtain a contradiction due to the quadratic term in $x_1$ in the last entry of \eqref{Engel_multiplication}. This argument can easily be extended to show that $\mathcal{Q}(G;N)$ is not an almost structured covering since infinitely many of the numbers $k' = (12n,2n,n,n)$ for $n \in \mathbb{N}$ have to correspond to one of the (finite number of) reference sets.
\end{proof}

The fact that the coverings $\mathcal{Q}^{\alpha}(G;N)$ are almost structured whenever the step of $(\mathbb{R}^n,*_G)$ is less than or equal two is closely related to the existence of $\mathcal{Q}^{\alpha}(G;N)$-BAPU's. The following proposition follows from \cite[Theorem 2.8]{Felix_Sobolev_BV} which is a slight generalization of the the general existence result \cite[Proposition 1]{BN3}.

\begin{proposition}
Let $(\mathbb{R}^n,*_G)$ be a rational stratified Lie group with step less than or equal two and fix a lattice $N$. Then the exists a $\mathcal{Q}^{\alpha}(G;N)$-BAPU for all $0 \leq \alpha \leq 1$. 
\end{proposition}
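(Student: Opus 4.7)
The plan is to reduce the statement to the general existence theorem for BAPUs on almost structured coverings and invoke Proposition \ref{almost_structured_coverings_result} for the reduction. By that proposition, under the hypothesis that the step of $(\mathbb{R}^n,*_G)$ is at most two, the coverings $\mathcal{Q}^{\alpha}(G;N)$ are structured for $0 \leq \alpha < 1$, while the Besov covering $\mathcal{B}(G) = \mathcal{Q}^{1}(G;N)$ is almost structured (with the trivial splitting into connected components if $n = 1$). Thus in every case $\mathcal{Q}^{\alpha}(G;N)$ falls into the class of almost structured coverings of Definition \ref{almost_structured_definition}, and this is the only place where the step $\leq 2$ assumption is used.

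Once the covering is known to be almost structured, the candidate BAPU is built by the standard recipe. For each reference set $P_s$ one picks a non-negative $\varphi_s \in C_c^{\infty}(P_s)$ that is identically one on the shrunken set $P_s'$, transports it via the affine map $A_i$ attached to index $i \in I$ to get $\varphi_i := \varphi_{s(i)} \circ A_i^{-1}$ supported in $Q_i$, and forms $\Psi := \sum_{i \in I} \varphi_i$. The admissibility condition \eqref{admissibility_constant} makes the sum locally finite and bounded above, while the third bullet of Definition \ref{almost_structured_definition} ensures $\Psi \geq 1$ pointwise. One then sets $\psi_i := \varphi_i / \Psi$. The support condition and the partition-of-unity property in \eqref{BAPU_definition} are immediate from the construction.

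The actual work lies in the uniform Fourier bound $\sup_{i} \|\mathcal{F}^{-1}\psi_i\|_{L^{1}} < \infty$. A direct change of variables shows $\|\mathcal{F}^{-1}\varphi_i\|_{L^{1}} = \|\mathcal{F}^{-1}\varphi_{s(i)}\|_{L^{1}}$, which is uniformly controlled since there are only finitely many reference bumps. The delicate point is that the division by $\Psi$ must be carried out uniformly in $i$. Pulling everything back by $A_i$ and using that only neighbouring indices $j \in i^{*}$ contribute, the localised denominator becomes a finite sum of compositions $\varphi_{s(j)} \circ A_i^{-1} A_j$, which are controlled in a translation-invariant subalgebra of Fourier multipliers precisely because of the uniform matrix bound $\|T_i^{-1}T_j\| \leq \mathcal{C}_Q$ from \eqref{compatibility_estimate_matricies}. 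A Wiener-type inversion then gives $1/\Psi$ in the same algebra with uniform norm, yielding the required bound on $\mathcal{F}^{-1}\psi_i$.

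The main obstacle is thus not conceptual but technical: verifying that the compositions $A_i^{-1} A_j$ generate only a bounded family of affine maps, so that the Wiener inversion argument closes. This is exactly the content of \cite[Proposition 1]{BN3} and its refinement \cite[Theorem 2.8]{Felix_Sobolev_BV}, so rather than reproducing the argument I would simply invoke that theorem in the form already cited in the paper, once the structured or almost structured nature of $\mathcal{Q}^{\alpha}(G;N)$ has been established via Proposition \ref{almost_structured_coverings_result}.
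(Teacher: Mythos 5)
Your proposal is correct and follows essentially the same route as the paper: the paper likewise deduces the existence of a $\mathcal{Q}^{\alpha}(G;N)$-BAPU directly from the fact that these coverings are (almost) structured when the step is at most two (Proposition \ref{almost_structured_coverings_result}), citing \cite[Theorem 2.8]{Felix_Sobolev_BV} and \cite[Proposition 1]{BN3} for the general existence result. Your additional sketch of how that general construction works is a reasonable account of the cited argument but is not reproduced in the paper either.
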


\begin{example}
\label{BAPU_example}
Consider a rational stratified Lie group $(\mathbb{R}^n,*_G, \|\cdot\|)$ of step less than or equal two with a lattice $N$. 
\begin{itemize}
    \item For $0 \leq \alpha < 1$, then an explicit $\mathcal{Q}^{\alpha}(G;N)$-BAPU can be constructed by adapting the argument in \cite[Proposition A.1]{BN1} as follows: Fix $r > r_1$ where $r_1$ is the number appearing in Proposition \ref{explicit_alpha_covering_prop}. Consider a positive and smooth function $\Phi:\mathbb{R}^n \to \mathbb{R}$ such that 
\begin{equation}
\label{infimum_and_support_conditions}
    \textrm{supp}(\Phi) \subset B^{\|\cdot\|}(0,r), \quad \inf_{\xi \in B^{\|\cdot\|}(0,r_1)}\Phi(\xi) > 0.
\end{equation}
To find the $\mathcal{Q}_{r}^{\alpha}(G;N)$-BAPU we need, we simply scale the argument of $\Phi$ correctly: Define \[g_{k}(\xi) := \Phi\left(D_{\|c_k\|^{-\alpha}}^{G}\left(c_{k}^{-1}*_{G}\xi\right)\right), \qquad c_k := \|k\|^{\frac{\alpha}{1-\alpha}}k, \quad k \in N\setminus \{0\}.\] Then $g_k$ is smooth and unwinding its definition shows that \[\textrm{supp}(g_k) \subset B^{\|\cdot\|}\left(\delta_{\beta}(k),r\|k\|^\beta\right), \quad k \in N\setminus \{0\}.\]
Moreover, the infimum bound in \eqref{infimum_and_support_conditions} ensures that for every $\xi \in \mathbb{R}^n$ there is a $g_k$ such that $g_{k}(\xi) > 0$. Define \[\psi_k(\xi) := \frac{g_{k}(\xi)}{\sum_{l \in N \setminus \{0\}}g_{l}(\xi)}.\] The $L^{1}$-bound in \eqref{BAPU_definition} is satisfied by an adaption of the argument in \cite[Proposition 2.4]{BN2}. For this to work it is essential that the step of $(\mathbb{R}^n,*_G)$ is less than or equal two so that the group multiplication $*_G$ can be represented by linear maps. Hence we obtain a $\mathcal{Q}_{r}^{\alpha}(G;N)$-BAPU. The functions $\psi_k$ have compact support since the balls induced by the homogeneous quasi-norm $\|\cdot\|$ are bounded (not uniformly) with respect to the Euclidean metric. 
\item For $\alpha = 1$ we can proceed as follows: Pick a positive and smooth function $\Phi_0$ with $\textrm{supp}(\Phi_0) \subset D_{0}(G)$ and $\Phi_{0}(x) = 1$ for every $x \in \mathbb{R}^n$ with $\|x\| \leq \frac{3}{2}$. Moreover, pick a positive and smooth function $\Phi_1$ with $\textrm{supp}(\Phi_1) \subset D_{1}(G)$ and with $\Phi_{1}(x) = 1$ for every $x \in \mathbb{R}^n$ with $\frac{3}{2} \leq \|x\| \leq \frac{7}{2}$. The collection $(\Phi_{m})_{m = 0}^{\infty}$ given by \begin{equation}
\label{Besov_BAPU}
    \Phi_{m}(x) := \Phi_{1}\left(D_{2^{1-m}}^{G}(x)\right), \quad m \geq 2, \, x \in \mathbb{R}^n,
\end{equation}
consists of smooth functions with $\textrm{supp}(\Phi_{m}) \subset D_{m}(G)$ that are never vanishing simultaneously. Hence we define the normalized collection \[\psi_{m}(x) := \frac{\Phi_{m}(x)}{\sum_{k = 0}^{\infty} \Phi_{k}(x)} = \frac{\Phi_{m}(x)}{\Phi_{m-1}(x) + \Phi_{m}(x) + \Phi_{m+1}(x)}, \qquad x \in \mathbb{R}^n, \quad m \in \mathbb{N}_0,\] where we set $\Phi_{-1} \equiv 0$ to make the last equality work for $m = 0$. The $L^{1}$-bound in \eqref{BAPU_definition} follows readily from the relation \eqref{Besov_BAPU} and thus $(\psi_{m})_{m = 0}^{\infty}$ is a $\mathcal{B}(G)$-BAPU. Since the support of $\psi_{m}$ is closed and contained in $B^{\|\cdot\|}(0,2^{m+1})$, it is clear that $\psi_m$ have compact support for every $m \geq 0$.
\end{itemize}
\end{example}

Notice that the existence of a lattice in the above example is not nessesary for the case $\alpha = 1$. In fact, the $\mathcal{B}(G)$-BAPU construction is valid for any stratified Lie group regardless of its step. For the rest of the paper, we will refer to a rational stratified Lie group with step less than or equal two as an \textit{admissible Lie group} for simplicity.

\section{Generalized $\alpha$-Modulation Spaces}
\label{sec: G_alpha_modulation_spaces}

In this section, we define the generalized $\alpha$-modulation spaces $M_{p,q}^{s,\alpha}(G)$ associated to a stratified Lie group $(\mathbb{R}^n,*_G)$. The spaces $M_{p,q}^{s,\alpha}(G)$ are built on the generalized $\alpha$-coverings examined in the previous section. In many regards, the spaces $M_{p,q}^{s,\alpha}(G)$ behave similarly to their Euclidean counterparts $M_{p,q}^{s,\alpha}(\mathbb{R}^n)$. However, we will show in later sections that they depend heavily on the stratified Lie group $(\mathbb{R}^n,*_G)$ in question. Firstly, let us define the correct reservoir defining the functions/distributions of interest.

\begin{definition}
Consider the space $Z(\mathbb{R}^{n}) := \mathcal{F}\left(C_{c}^{\infty}(\mathbb{R}^n)\right)$ consisting of Fourier transforms of all smooth functions with compact support. We equip the space $Z(\mathbb{R}^{n})$ with the unique topology ensuring that the Fourier transform is a homeomorphism from $C_{c}^{\infty}(\mathbb{R}^n)$ to $Z(\mathbb{R}^{n})$. Define the \textit{Fourier type reservoir} as the dual space $Z'(\mathbb{R}^n)$ equipped with the weak$^*$ topology. 
\end{definition}

The Fourier transform extends by duality to a homeomorphism $$\mathcal{F}:Z'(\mathbb{R}^n) \longrightarrow \mathcal{D}'(\mathbb{R}^n) := \left(C_{c}^{\infty}(\mathbb{R}^n)\right)'.$$ We refer the reader to \cite[Chapter 3]{Felix_main} where the danger of using the tempered distributions $\mathcal{S}'(\mathbb{R}^n)$ as a reservoir instead of the more exotic space $Z'(\mathbb{R}^n)$ is discussed. This might seem contradictory as we defined the Euclidean $\alpha$-modulation spaces $M_{p,q}^{s,\alpha}(\mathbb{R}^n)$ in Definition \ref{usual_alpha_mod_spaces} as subspaces of the tempered distributions $\mathcal{S}'(\mathbb{R}^n)$. However, it follows from \cite[Theorem 8.3]{Felix_main} that the Euclidean $\alpha$-modulation spaces would embed into $\mathcal{S}'(\mathbb{R}^n)$ if we had defined them using the Fourier type reservoir $Z'(\mathbb{R}^n)$. Hence one might as well define the Euclidean $\alpha$ modulation spaces as subspaces of tempered distributions without loss of generality. 

\begin{definition}
\label{definition_G_alpha_modulation_spaces}
Let $(\mathbb{R}^n,*_G,\|\cdot\|)$ be a stratified Lie group with a homogeneous quasi-norm $\|\cdot\|$. Consider a generalized $\alpha$-covering $\mathcal{P}^{\alpha} = (P_{i}^{\alpha})_{i \in I}$ on $\mathbb{R}^n$ where $0 \leq \alpha \leq 1$ and assume that $\Phi = (\psi_{i})_{i \in I}$ is a $\mathcal{P}^{\alpha}$-BAPU. The \textit{generalized} $\alpha$-\textit{modulation space} $M_{p,q}^{s,\alpha}(G)$ for $1 \leq p,q \leq \infty$ and $s \in \mathbb{R}$ consists of all Fourier type distributions $f \in Z'(\mathbb{R}^{n})$ such that 
\begin{equation}
\label{alpha_modulation_norm}
    \|f\|_{M_{p,q}^{s,\alpha}(G)} := \left\|\left((1 + \|\xi_{i}\|^{2})^{\frac{s}{2}}\left\|\mathcal{F}^{-1}\left(\psi_{i} \cdot \mathcal{F}(f)\right)\right\|_{L^{p}}\right)_{i \in I}\right\|_{l^{q}(I)} < \infty,
\end{equation} where $\xi_{i} \in P_{i}^{\alpha}$ for every $i \in I$. The number $s$ will be referred to as the \textit{smoothness parameter} of the space $M_{p,q}^{s,\alpha}(G)$, while $p$ and $q$ are called the \textit{integrability parameters}.
\end{definition}

If the stratified Lie group $(\mathbb{R}^n,*_G)$ is isomorphic to the Euclidean space $(\mathbb{R}^{n},+)$ with its usual addition, then Definition \ref{definition_G_alpha_modulation_spaces} reduces to the usual $\alpha$-modulation spaces $M_{p,q}^{s,\alpha}(\mathbb{R}^n)$. Notice that \[M_{p,q}^{s + \epsilon,\alpha}(G) \subset M_{p,q}^{s,\alpha}(G), \qquad M_{p,q_1}^{s,\alpha}(G) \subset M_{p,q_2}^{s,\alpha}(G),\] for all $\epsilon > 0$ and whenever $q_1 \leq q_2$ due to the monotonicity of the  $l^{q}$-norms. It follows from Proposition \ref{proposition_about_equivalence} that any two generalized $\alpha$-coverings for the same stratified Lie group are equivalent. Hence \cite[Theorem 3.7]{Hans_Grobner} implies that $M_{p,q}^{s,\alpha}(G)$ does not depend on the specific generalized $\alpha$-covering chosen. Moreover, it follows from \cite[Theorem 2.3 B)]{Hans_Grobner} that different choices of $\mathcal{P}^{\alpha}$-BAPU's in Definition \ref{definition_G_alpha_modulation_spaces} yield equivalent norms.

\begin{remark}
As we have discussed in Subsection \ref{sec: Almost_Structured_Coverings}, we can only guarantee the existence of the BAPU's needed in Definition \ref{definition_G_alpha_modulation_spaces} in certain settings. This setting include all admissible Lie groups, which is the most interesting class when it comes to applications. However, will will state some results for generalized $\alpha$-modulation spaces on an arbitrary stratified Lie group with the convention that this might be vacuous when we do not know the existence of suitable BAPU's. In that way, some of the results we prove can still be used for a general stratified Lie group once the existence of a suitable BAPU has been established.  
\end{remark}

\begin{remark}
Let us briefly comment on why the expression \eqref{alpha_modulation_norm} is well defined: Since $f \in Z'(\mathbb{R}^n)$ we have that $\mathcal{F}(f) \in \mathcal{D}'(\mathbb{R}^n)$. Then the product $\psi_i \cdot \mathcal{F}(f)$ is a compactly supported distribution. Hence we can consider $\psi_i \cdot \mathcal{F}(f)$ as a tempered distribution and thus $\mathcal{F}^{-1}\left(\psi_{i} \cdot \mathcal{F}(f)\right) \in \mathcal{S}'(\mathbb{R}^n)$. Moreover, $\mathcal{F}^{-1}\left(\psi_{i} \cdot \mathcal{F}(f)\right)$ acts on rapidly decaying functions by integrating them against an entire function with polynomially bounded derivatives by (a variant of) the Paley-Wiener Theorem \cite[Theorem 7.23]{Rudin_Functional_Analysis}. Hence the expression in \eqref{alpha_modulation_norm} is well defined, although often infinite.
\end{remark}

The generalized $\alpha$-modulation spaces $M_{p,q}^{s,\alpha}(G)$ are complete for all values of the parameters $1 \leq p,q \leq \infty$, $s \in \mathbb{R}^n$, and $0 \leq \alpha \leq 1$ by \cite[Theorem 3.21]{Felix_main}. Motivated by the Euclidean setting, we will also refer to \[M_{p,q}^{s}(G) := M_{p,q}^{s,0}(G)\] as the \textit{modulation space} corresponding to the stratified Lie group $(\mathbb{R}^n,*_G)$. The modulation space $M_{p,q}^{s}(\mathbb{H}_n)$ corresponding to the Heisenberg group $\mathbb{H}_{n}$ has been investigated in \cite{David}. Similarly, we will also refer to \[\mathcal{B}_{p,q}^{s}(G) := M_{p,q}^{s,1}(G)\] as the \textit{Besov space} corresponding to the stratified Lie group $(\mathbb{R}^n,*_G)$. One can view the Besov spaces $\mathcal{B}_{p,q}^{s}(G)$ as generalizations of the traditional Besov spaces $\mathcal{B}_{p,q}^{s}(\mathbb{R}^n)$ where the dilations are not uniform in different directions. The spaces $M_{p,q}(G) := M_{p,q}^{0,0}(G)$ and $\mathcal{B}_{p,q}(G) := \mathcal{B}_{p,q}^{0,1}(G)$ will be called the \textit{standard modulation spaces} and \textit{standard Besov spaces} of $(\mathbb{R}^n,*_G)$, respectively. We begin by giving concrete realizations of the generalized $\alpha$-modulation spaces.

\begin{corollary}
\label{equivalent_norm}
Let $(\mathbb{R}^n, *_G, \|\cdot\|)$ be a rational stratified Lie group with a given lattice $N$. Fix the parameters $1 \leq p,q \leq \infty$ and $s \in \mathbb{R}$. 
\begin{itemize}
    \item An equivalent norm on the modulation space $M_{p,q}^{s}(G)$ is given by the expression 
    \begin{equation}
    \label{second_norm_equivalence}
        \left(\sum_{k \in N \setminus \{0\}}\left(1 + \|k\|^{2}\right)^{\frac{qs}{2}}\left\|\mathcal{F}^{-1}\left(\psi_{k} \cdot \mathcal{F}(f)\right)\right\|_{L^{p}}^{q}\right)^{\frac{1}{q}},
    \end{equation}
    where $f \in M_{p,q}^{s}(G)$ and $\{\psi_{k}\}_{k \in N \setminus \{0\}}$ is a $\mathcal{U}(G;N)$-BAPU where $\mathcal{U}(G;N)$ is described in Subsection \ref{sec: Uniform_Case}.
    \item An equivalent norm on $M_{p,q}^{s,\alpha}(G)$ for $0 < \alpha < 1$ is given by the expression 
    \begin{equation}
    \label{first_norm_equivalence}
        \left(\sum_{k \in N \setminus \{0\}}\left( 1 + \|k\|^{\frac{2}{(1-\alpha)}}\right)^{\frac{qs}{2}}\left\|\mathcal{F}^{-1}\left(\psi_{k} \cdot \mathcal{F}(f)\right)\right\|_{L^{p}}^{q}\right)^{\frac{1}{q}},
    \end{equation}
    where $f \in M_{p,q}^{s,\alpha}(G)$ and $\{\psi_{k}\}_{k \in N \setminus \{0\}}$ is a $\mathcal{Q}_{r}^{\alpha}(G;N)$-BAPU where $\mathcal{Q}_{r}^{\alpha}(G;N)$ is given in Subsection \ref{sec: Intermediate_Case}.
    \item An equivalent norm on the Besov space $\mathcal{B}_{p,q}^{s}(G)$ is given by the expression
    \begin{equation}
    \label{third_norm_equivalence}
        \left(\sum_{m = 0}^{\infty} 2^{mqs}\left\|\mathcal{F}^{-1}\left(\psi_{m} \cdot \mathcal{F}(f)\right)\right\|_{L^{p}}^{q}\right)^{\frac{1}{q}},
    \end{equation}
     where $f \in \mathcal{B}_{p,q}^{s}(G)$ and $\{\psi_{m}\}_{m= 0}^{\infty}$ is a $\mathcal{B}(G)$-BAPU where $\mathcal{B}(G)$ is described in Subsection \ref{sec: Dyadic_Case}.
\end{itemize}
If $(\mathbb{R}^n, *_G, \|\cdot\|)$ is an admissible Lie group, then we can pick the explicit BAPU's given in Example \ref{BAPU_example}.
\end{corollary}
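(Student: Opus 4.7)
The plan is to apply two invariance properties already stated for the space $M_{p,q}^{s,\alpha}(G)$ and then, for each of the three regimes in the statement, evaluate the weight $(1+\|\xi_i\|^2)^{s/2}$ at a conveniently chosen point inside the relevant covering element. The first invariance is that $M_{p,q}^{s,\alpha}(G)$ does not depend (up to equivalent norms) on the particular generalized $\alpha$-covering used, which follows from Proposition~\ref{proposition_about_equivalence} together with \cite[Theorem 3.7]{Hans_Grobner}. The second is that $M_{p,q}^{s,\alpha}(G)$ does not depend on the specific BAPU subordinate to a fixed generalized $\alpha$-covering, which is \cite[Theorem 2.3 B)]{Hans_Grobner}. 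These facts reduce the proof to plugging the concrete coverings $\mathcal{Q}^{\alpha}(G;N)$ from \eqref{explicit_coverings_cases} into Definition~\ref{definition_G_alpha_modulation_spaces} and choosing a distinguished representative $\xi_i$ in each covering element.

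For the uniform case $\alpha = 0$, the element $B^{\|\cdot\|}(n,R)$ of $\mathcal{U}(G;N)$ contains its center $n$, and choosing $\xi_n = n$ converts the weight in \eqref{alpha_modulation_norm} into $(1+\|n\|^2)^{s/2}$, giving \eqref{second_norm_equivalence}. For the intermediate range $0 < \alpha < 1$, the element $B^{\|\cdot\|}(\delta_\beta(k),r\|k\|^\beta)$ of $\mathcal{Q}_r^\alpha(G;N)$ contains its center $\delta_\beta(k) = \|k\|^\beta k$, and the $1$-homogeneity of the quasi-norm gives $\|\delta_\beta(k)\| = \|k\|^{\beta + 1} = \|k\|^{1/(1-\alpha)}$, so $(1 + \|\delta_\beta(k)\|^2)^{s/2} = (1 + \|k\|^{2/(1-\alpha)})^{s/2}$, which is the weight in \eqref{first_norm_equivalence}. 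For the Besov case $\alpha = 1$, any $\xi_m \in D_m(G)$ with $m \geq 1$ satisfies $2^{m-1} \leq \|\xi_m\| \leq 2^{m+1}$ by the very definition of $D_m(G)$, so $(1+\|\xi_m\|^2)^{s/2} \asymp 2^{ms}$ with constants independent of $m$; the summand coming from $D_0(G)$ is comparable to the $m = 0$ term in \eqref{third_norm_equivalence} since $D_0(G)$ is bounded. The resulting equivalence of norms then yields \eqref{third_norm_equivalence}.

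The last sentence of the statement follows by noting that for admissible Lie groups the proposition preceding Example~\ref{BAPU_example} guarantees the existence of a $\mathcal{Q}^\alpha(G;N)$-BAPU for every $0 \leq \alpha \leq 1$, and the explicit constructions in Example~\ref{BAPU_example} are instances of such BAPU's. The one bookkeeping point that needs verification is that the weight $(1+\|\xi_i\|^2)^{s/2}$ is comparable across different choices of $\xi_i \in P_i^\alpha$ uniformly in $i$; this is the only potential obstacle and is handled by combining the triangle-type inequality \eqref{quasi_norm_constant} with the size bound $R^{\|\cdot\|}(P_i^\alpha) \asymp |P_i^\alpha|^{1/Q}$ from \eqref{comparable_sizes} and the condition \eqref{size_of_coverings}, so that any two points in $P_i^\alpha$ have quasi-norms differing by a factor controlled uniformly in $i$, making the weight $\mathcal{Q}^\alpha(G;N)$-moderate and the norm independent of the point chosen.
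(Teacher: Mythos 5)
Your proposal is correct and follows essentially the same route as the paper: reduce to the explicit coverings $\mathcal{Q}^{\alpha}(G;N)$ via the covering- and BAPU-independence results, then evaluate the weight at the center point $\delta_{\beta}(k)$ (respectively at a point of norm $\asymp 2^m$ in $D_m(G)$), exactly as in the paper's computation $\left(1+\|\delta_{\beta}(k)\|^2\right)^{1/2} = \left(1+\|k\|^{2/(1-\alpha)}\right)^{1/2}$. Your closing remark on the uniform comparability of the weight over each covering element is a point the paper delegates to the corresponding estimate in the proof of Proposition \ref{explicit_alpha_covering_prop}, so nothing is missing.
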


\begin{proof}
Consider the intermediate case $0 < \alpha < 1$: Picking the center point $\delta_{\beta}(k)$ in each of the balls in the covering $\mathcal{Q}_{r}^{\alpha}(G;N)$ gives \[\left(1 + \|\delta_{\beta}(k)\|^{2}\right)^{\frac{1}{2}} = \left(1 + \|k\|^{2(\beta + 1)}\right)^{\frac{1}{2}} = \left(1 + \|k\|^{\frac{2}{1-\alpha}}\right)^{\frac{1}{2}}, \quad k \in N \setminus \{0\}.\] Hence \eqref{first_norm_equivalence} follows and we obtain \eqref{second_norm_equivalence} automatically since $\mathcal{U}(G;N) = \mathcal{Q}_{r}^{0}(G;N)$ as explained in Proposition \ref{explicit_alpha_covering_prop}. For the Besov covering $\mathcal{B}(G)$ the choice of homogeneous quasi-norm in \eqref{alpha_modulation_norm} is irrelevant due to Lemma \ref{Besov_covering_lemma}. Hence we can freely choose the homogeneous quasi-norm $\|\cdot\|_{2}$ given in \eqref{spesific_homogeneous_quasi_norm} and use consequence \eqref{elemetns_in_dyadic_covering}. Thus we obtain\[\left(1 + \|\left(2^{m}, 0, \dots ,0\right)\|_{2}^{2}\right)^{\frac{1}{2}} = \left(1 + 4^m\right)^{\frac{1}{2}} \asymp 2^m, \quad m \in \mathbb{N}_{0},\] and the statement follows. 
\end{proof}

\begin{remark}
We would like to emphasize that the expression \eqref{third_norm_equivalence} does not depend on the lattice $N$ and is hence valid whenever $(\mathbb{R}^n,*_G)$ is not rational. The expressions \eqref{third_norm_equivalence} also shows the similarities with the classical Besov spaces $\mathcal{B}_{p,q}^{s}(\mathbb{R}^n)$ in the literature. 
\end{remark}

Let us introduce some notation that simplifies the expressions in Corollary \ref{equivalent_norm} for admissible Lie groups when $0 \leq \alpha < 1$: Consider the generalized $\alpha$-covering $\mathcal{Q}^{\alpha}(G;N)$ and a smooth $\mathcal{Q}^{\alpha}(G;N)$-BAPU $\{\psi_{k}\}_{k \in N \setminus \{0\}}$ with compact support. Let $\mathcal{P}(\mathbb{R}^n;N)$ denote sequences $\{f_k\}_{k \in N \setminus \{0\}}$ where each $f_k \in \mathcal{S}'(\mathbb{R}^n)$ acts on rapidly decaying functions by integrating against a polynomially bounded function. Define the \textit{Fourier multiplier operator} \[\square_{k}^{\alpha,G} := \mathcal{F}^{-1}\psi_{k}\mathcal{F},\] and the space \[l_{p,q}^{s,\alpha}(G;N) := \left\{\{f_k\}_{k \in N \setminus \{0\}} \in  \mathcal{P}(\mathbb{R}^n;N) \, \Big| \, \left\|\left(1+ \|k\|^{\frac{2}{1-\alpha}}\right)^{\frac{s}{2}}\|f_k\|_{L^p} \right\|_{l^{q}(N \setminus \{0\})} < \infty \right\}.\] The generalized $\alpha$-modulation space $M_{p,q}^{s,\alpha}(G)$ for an admissible Lie group $(\mathbb{R}^n,*_G)$ and parameters $0 \leq \alpha < 1$, $ 1 \leq p,q \leq \infty$ and $s \in \mathbb{R}$ can be written as 
\begin{equation}
\label{other_description_G_alpha_mod_spaces}
M_{p,q}^{s,\alpha}(G) = \left\{f \in Z'(\mathbb{R}^n) \, \Big| \, \|\square_{k}^{\alpha,G}f\|_{l_{p,q}^{s,\alpha}(G;N)} < \infty \right\}.
\end{equation}
We emphasize that this is only valid since the image of the Fourier type distributions $Z'(\mathbb{R}^n)$ under the Fourier multiplier operator $\square_{k}^{\alpha,G}$ for $k \in N \setminus \{0\}$ and $0 \leq \alpha < 1$ is contained in the tempered distributions. \par
The representation \eqref{other_description_G_alpha_mod_spaces} is useful because it allows us to reduce certain questions about the generalized $\alpha$-modulation spaces $M_{p,q}^{s,\alpha}(G)$ to the sequence-type spaces $l_{p,q}^{s,\alpha}(G;N)$. As an application, we now prove a duality relation between the spaces $M_{p,q}^{s,\alpha}(G)$ for $1 \leq p,q < \infty$, $s \in \mathbb{R}$ and $0 \leq \alpha < 1$. This is more or less a straightforward adaption of the Euclidean case given in \cite[Theorem 2.1]{Han} with some minor modifications resulting from using an arbitrary lattice $N$ instead of the concrete lattice $\mathbb{Z}^{n} \subset \mathbb{R}^n$. 

\begin{proposition}
Let $(\mathbb{R}^n,*_G)$ be an admissible Lie group and fix parameters $1 \leq p,q < \infty$, $s \in \mathbb{R}$ and $0 \leq \alpha < 1$. The dual space of $M_{p,q}^{s,\alpha}(G)$ can be identified with $M_{p',q'}^{-s,\alpha}(G)$, where $p'$ and $q'$ are the conjugate variables of $p$ and $q$, respectively. 
\end{proposition}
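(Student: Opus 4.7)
The plan is to exploit the representation \eqref{other_description_G_alpha_mod_spaces} of $M_{p,q}^{s,\alpha}(G)$ to reduce the problem to a duality statement for the sequence space $l_{p,q}^{s,\alpha}(G;N)$, where the corresponding duality is standard since $1 \leq p,q < \infty$. To make this precise, fix a $\mathcal{Q}^{\alpha}(G;N)$-BAPU $\{\psi_k\}_{k \in N\setminus \{0\}}$ with compact support (available by the construction in Example \ref{BAPU_example}) and consider the map
\[
\Psi: M_{p,q}^{s,\alpha}(G) \longrightarrow l_{p,q}^{s,\alpha}(G;N), \qquad \Psi(f) := \left(\square_{k}^{\alpha,G} f\right)_{k \in N\setminus\{0\}}.
\]
By Corollary \ref{equivalent_norm}, $\Psi$ is an isometric embedding onto a closed subspace. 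The dual of $l_{p,q}^{s,\alpha}(G;N)$ is canonically isomorphic to $l_{p',q'}^{-s,\alpha}(G;N)$ via the bilinear pairing $\langle (f_k),(g_k)\rangle = \sum_k \int f_k \overline{g_k}$, and this is the standard fact I want to transport to the generalized $\alpha$-modulation spaces.

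For the embedding $M_{p',q'}^{-s,\alpha}(G) \hookrightarrow \bigl(M_{p,q}^{s,\alpha}(G)\bigr)^{*}$, given $g \in M_{p',q'}^{-s,\alpha}(G)$ define
\[
L_g(f) := \sum_{k \in N \setminus \{0\}} \int_{\mathbb{R}^n} \square_{k}^{\alpha,G} f(x) \cdot \widetilde{\square}_{k}^{\alpha,G} g(x) \, dx, \qquad f \in M_{p,q}^{s,\alpha}(G),
\]
where $\widetilde{\square}_{k}^{\alpha,G} := \mathcal{F}^{-1} \tilde{\psi}_{k} \mathcal{F}$ with $\tilde{\psi}_k := \sum_{j : P_k^{\alpha} \cap P_j^{\alpha} \neq \emptyset} \psi_j$; the admissibility bound \eqref{admissibility_constant} ensures $\tilde\psi_k$ is a finite sum and $\tilde \psi_k \cdot \psi_k = \psi_k$. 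A two-step application of Hölder's inequality (first in $L^p$ on each summand, then in $l^q$ in the variable $k$, with the weight $(1+\|k\|^{2/(1-\alpha)})^{s/2}$ split between the two factors) gives the bound
\[
|L_g(f)| \leq C \cdot \|f\|_{M_{p,q}^{s,\alpha}(G)} \cdot \|g\|_{M_{p',q'}^{-s,\alpha}(G)},
\]
where I use that multiplication of distributions by $\tilde{\psi}_k$ produces an $L^{p'}$-norm uniformly comparable to $\|\square_k^{\alpha,G} g\|_{L^{p'}}$ (this uses the BAPU property and the admissibility of the covering, and is exactly the kind of estimate used in the Euclidean case \cite[Theorem 2.1]{Han}). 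That $L_g$ is well-defined on $Z'(\mathbb{R}^n)$-classes rather than just their pieces follows from the BAPU identity $\sum_k \psi_k \equiv 1$.

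For surjectivity — which I expect to be the main technical point — let $L \in \bigl(M_{p,q}^{s,\alpha}(G)\bigr)^{*}$. Through the isometric embedding $\Psi$, the Hahn–Banach theorem produces an extension of $L \circ \Psi^{-1}$ to a continuous linear functional on the entire space $l_{p,q}^{s,\alpha}(G;N)$, hence represented by a sequence $(g_k) \in l_{p',q'}^{-s,\alpha}(G;N)$. The remaining task is to recognize that $(g_k)$ is (up to harmless modifications and the weight-absorption mentioned above) of the form $\bigl(\widetilde{\square}_k^{\alpha,G} g\bigr)_k$ for some $g \in M_{p',q'}^{-s,\alpha}(G)$. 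The standard strategy, mirroring the Euclidean proof in \cite{Han}, is to set $g := \sum_k \mathcal{F}^{-1}\bigl(\tilde\psi_k \mathcal{F}(g_k)\bigr)$, interpreted in $Z'(\mathbb{R}^n)$ via the duality pairing with $Z(\mathbb{R}^n)$-functions, where the admissibility of $\mathcal{Q}^{\alpha}(G;N)$ guarantees local finiteness of the sum on any compactly supported test function. One then checks that $\square_\ell^{\alpha,G} g$ differs from $g_\ell$ only by a finite linear combination indexed by the neighbours of $P_\ell^{\alpha}$, and this finite-multiplicity correction preserves the $l_{p',q'}^{-s,\alpha}(G;N)$-norm up to a constant depending only on $N_{\mathcal{Q}}$ and the weight-moderation constant of $k \mapsto (1+\|k\|^{2/(1-\alpha)})^{s/2}$, which is $\mathcal{Q}^{\alpha}(G;N)$-moderate for every fixed $s$ by \eqref{size_of_coverings}. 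This yields $g \in M_{p',q'}^{-s,\alpha}(G)$ with $L = L_g$, completing the identification.

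The hard part is really bookkeeping: verifying that the weight $(1+\|k\|^{2/(1-\alpha)})^{s/2}$ is $\mathcal{Q}^{\alpha}(G;N)$-moderate in the sense of Subsection \ref{sec: Admissible_coverings}, and that the BAPU-induced operators $\square_k^{\alpha,G}$ satisfy the uniform $L^p$-boundedness needed to absorb neighbour-overlaps. Both facts ultimately rest on the equivalence property in Proposition \ref{proposition_about_equivalence} and the admissibility constant \eqref{admissibility_constant}; modulo these, the argument proceeds exactly as in the Euclidean setting.
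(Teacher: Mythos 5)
Your proposal is correct and follows essentially the same route as the paper: both reduce the problem to the sequence space $l_{p,q}^{s,\alpha}(G;N)$ via the representation \eqref{other_description_G_alpha_mod_spaces}, prove the easy inclusion by a two-fold application of H\"older's inequality with the weight split between the factors, and obtain surjectivity by identifying $M_{p,q}^{s,\alpha}(G)$ with a closed subspace of $l_{p,q}^{s,\alpha}(G;N)$, invoking the sequence-space duality $(l_{p,q}^{s,\alpha})^{*} \simeq l_{p',q'}^{-s,\alpha}$, and reconstructing $g = \sum_k \square_k^{\alpha,G} g_k$ with the neighbour-finiteness of the covering controlling the norm. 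Your use of the enlarged cut-offs $\tilde\psi_k$ and the explicit appeal to Hahn--Banach are minor cosmetic refinements of the paper's argument, not a different method.
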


\begin{proof}
Fix a lattice $N \subset (\mathbb{R}^n,*_G)$. Any $f \in M_{p',q'}^{-s,\alpha}(G)$ acts on elements $g \in M_{p,q}^{s,\alpha}(G)$ by \[\langle f,g\rangle := \sum_{k \in N \setminus \{0\}}\int_{\mathbb{R}^n}\square_{k}^{\alpha,G}f \cdot \square_{k}^{\alpha,G}g \, dx.\]
A straightforward computation using H\"{o}lder's inequality twice shows that 
\begin{align*}
    |\langle f,g\rangle| & = \left|\sum_{k \in N\setminus \{0\}}\int_{\mathbb{R}^n}\left(1 + \|k\|^{\frac{2}{1- \alpha}}\right)^{\frac{-s}{2}}\square_{k}^{\alpha,G}f \cdot \left(1 + \|k\|^{\frac{2}{1- \alpha}}\right)^{\frac{s}{2}}\square_{k}^{\alpha,G}g \, dx\right| \\ & \leq
    \left|\sum_{k \in N \setminus \{0\}}\left(\int_{\mathbb{R}^n}\left(1 + \|k\|^{\frac{2}{1- \alpha}}\right)^{\frac{-sp'}{2}}[\square_{k}^{\alpha,G}f]^{p'} \, dx\right)^{\frac{1}{p'}} \cdot \left(\int_{\mathbb{R}^n}\left(1 + \|k\|^{\frac{2}{1- \alpha}}\right)^{\frac{sp}{2}}[\square_{k}^{\alpha,G}g]^{p} \, dx\right)^{\frac{1}{p}} \right| \\ & \leq \|f\|_{M_{p',q'}^{-s,\alpha}(G)}\|g\|_{M_{p,q}^{s,\alpha}(G)},
\end{align*}
where we have used the explicit expressions in \eqref{first_norm_equivalence} and \eqref{second_norm_equivalence}. Hence the action of $f$ on $M_{p,q}^{s,\alpha}(G)$ is bounded and we have $M_{p',q'}^{-s,\alpha}(G) \subset (M_{p,q}^{s,\alpha}(G))^{*}$. \par 
Conversely, an element $h \in (M_{p,q}^{s,\alpha}(G))^{*}$ induce an element $\tilde{h} \in \left(l_{p,q}^{s,\alpha}(G;N)\right)^{*}$ since we can identify $M_{p,q}^{s,\alpha}(G)$ with the image $\{\square_{k}^{\alpha,G}f\}_{k \in N \setminus \{0\}} \in l_{p,q}^{s,\alpha}(G;N).$
A standard argument similar to the one given in \cite[Proposition 3.3]{Wang} shows that we have the duality relation \[\left(l_{p,q}^{s,\alpha}(G;N)\right)^{*} \simeq l_{p',q'}^{-s,\alpha}(G;N), \qquad \frac{1}{p} + \frac{1}{p'} = \frac{1}{q} + \frac{1}{q'} = 1.\]
Hence we can find $\{h_k\} \in l_{p',q'}^{-s,\alpha}(G;N)$ such that \[\langle \tilde{h},\{f_k\}\rangle = \sum_{k \in N \setminus \{0\}}\int_{\mathbb{R}^n}\overline{h_{k}(x)}f_{k}(x) \, dx, \quad \{f_k\} \in l_{p,q}^{s,\alpha}(G;N).\] Thus for $g \in M_{p,q}^{s,\alpha}(G)$ we can use Plancherel to obtain \[\langle h,g\rangle = \langle \tilde{h}, \square_{k}^{\alpha,G}g\rangle = \int_{\mathbb{R}^n}\sum_{k \in N \setminus \{0\}}\overline{\square_{k}^{\alpha,G}h_{k}(x)}g(x)\, dx,\] and we can conclude that \[h = \sum_{k \in N \setminus \{0\}}\square_{k}^{\alpha,G}h_k.\]
A straightforward generalization of \cite[Lemma 2.1]{Han} shows that \[\|h\|_{M_{p',q'}^{-s,\alpha}(G)} \simeq \|\{h_k\}_{k \in N \setminus \{0\}}\|_{l_{p',q'}^{-s,\alpha}(G;N)} = \|h\|_{(M_{p,q}^{s,\alpha}(G))^{*}},\] implying that $(M_{p,q}^{s,\alpha}(G))^{*} \subset M_{p',q'}^{-s,\alpha}(G)$.
\end{proof}

The duality relations for the Besov spaces are the obvious extensions of the Euclidean Besov spaces, namely \[\left(\mathcal{B}_{p,q}^{s}(G)\right)^{*} \simeq B_{p',q'}^{-s}(G),\] where $1 \leq p,q < \infty$, $s \in \mathbb{R}$, and $p',q'$ are the conjugate variables of $p$ and $q$. We omit the proof since it is straightforward.

\begin{corollary}
Let $(\mathbb{R}^n,*_G)$ be an admissible Lie group. The modulation spaces $M_{p,q}^{s}(G)$ and Besov spaces $\mathcal{B}_{p,q}^{s}(G)$ for $1 < p,q < \infty$ and $s \in \mathbb{R}$ are two families of reflexive Banach spaces spaces that are closed under duality.
\end{corollary}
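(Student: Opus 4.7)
The plan is to assemble the corollary from pieces that are either already established in the excerpt or are immediate consequences of them. First, the Banach space property: completeness of $M_{p,q}^{s,\alpha}(G)$ for the full range of parameters was quoted from \cite[Theorem 3.21]{Felix_main} just after Definition \ref{definition_G_alpha_modulation_spaces}, and the norm \eqref{alpha_modulation_norm} is manifestly a norm (not merely a seminorm) on the specified reservoir; specializing to $\alpha = 0$ and $\alpha = 1$ gives that $M_{p,q}^{s}(G)$ and $\mathcal{B}_{p,q}^{s}(G)$ are Banach spaces for all $1 \leq p,q \leq \infty$ and $s \in \mathbb{R}$.

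Next, closure under duality is already in hand: the duality identification \[(M_{p,q}^{s,\alpha}(G))^{*} \simeq M_{p',q'}^{-s,\alpha}(G)\] at $\alpha = 0$ is precisely the content of the preceding proposition applied with $\alpha = 0$ (which falls inside the admissible range $0 \leq \alpha < 1$ of that proposition), and at $\alpha = 1$ it is exactly the analogous Besov duality \[(\mathcal{B}_{p,q}^{s}(G))^{*} \simeq \mathcal{B}_{p',q'}^{-s}(G)\] stated in the paragraph immediately above the corollary. Since the assumption $1 < p,q < \infty$ forces $1 < p', q' < \infty$, the dual spaces lie in the same families, which is the ``closed under duality'' statement.

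Finally, reflexivity is obtained by iterating the duality identifications. For $1 < p,q < \infty$ we have $(p')' = p$ and $(q')' = q$, so applying the duality twice gives the natural isomorphisms \[(M_{p,q}^{s}(G))^{**} \simeq (M_{p',q'}^{-s}(G))^{*} \simeq M_{p,q}^{s}(G),\] and similarly for $\mathcal{B}_{p,q}^{s}(G)$. The point that requires a small check (and is the only non-automatic step) is that the composition of the two identifications coincides with the canonical evaluation map $J: X \to X^{**}$, so that reflexivity genuinely follows and not merely an isometric isomorphism $X \simeq X^{**}$. This is straightforward from the construction in the preceding proposition: the pairing used there is the integral pairing $\langle f,g\rangle = \sum_{k} \int \square_k^{\alpha,G} f \cdot \square_k^{\alpha,G} g\,dx$, which is symmetric in $f$ and $g$, so chasing the definitions shows that the bidual identification is exactly evaluation.

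The main (minor) obstacle is the last bookkeeping point above, since being reflexive is stronger than being isomorphic to the bidual. Everything else is a direct appeal to results already in the excerpt; once the pairing is recognized as symmetric (and hence realizes $J$), the corollary follows immediately for both families simultaneously.
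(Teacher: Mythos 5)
Your proposal is correct and follows the route the paper intends: the paper states this corollary without proof, treating it as an immediate consequence of the preceding duality proposition and the Besov duality remark, which is exactly what you assemble. Your extra care in checking that the composed duality identifications realize the canonical evaluation map $J:X\to X^{**}$ (via the symmetry of the pairing $\langle f,g\rangle = \sum_{k}\int \square_{k}^{\alpha,G}f\cdot\square_{k}^{\alpha,G}g\,dx$) is precisely the point one must not gloss over, and you handle it correctly.
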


There is an abundance of properties one can prove when defining new function spaces. We will focus on two properties illustrating that the generalized $\alpha$-modulation spaces $M_{p,q}^{s,\alpha}(G)$ are not degenerate:
\begin{enumerate}[label=(\roman*)]
\label{enumerate_nondegenerate_properties}
    \item When $n = \textrm{dim}(G)$ the spaces $M_{p,q}^{s,\alpha}(G)$ are large enough to contain the rapidly decaying smooth functions $\mathcal{S}(\mathbb{R}^n)$ as subspaces.
    \item The spaces $M_{p_1,q_1}^{s_1,\alpha_1}(G)$ are really new spaces in the sense that they do not coincide with the traditional $\alpha$-modulation spaces $M_{p_2,q_2}^{s_2,\alpha_2}(\mathbb{R}^n)$ for most of the parameters $1 \leq p_1,p_2,q_1,q_2 \leq \infty$, $s_1,s_2 \in \mathbb{R}$, and $0 \leq \alpha_1,\alpha_2 \leq 1$.
\end{enumerate}
Property \hyperref[enumerate_nondegenerate_properties]{(i)} relies on basic properties of lattices in stratified Lie groups and is proved below. On the other hand, Property \hyperref[enumerate_nondegenerate_properties]{(ii)} is more challenging and require several preliminary results. The main aim of Section \ref{sec: Novelty} is to prove Property \hyperref[enumerate_nondegenerate_properties]{(ii)}.

\begin{proposition}
\label{Schwartz_functions}
Let $(\mathbb{R}^n,*_G)$ be a rational stratified Lie group with $\textrm{dim}(G) = n$. Then the rapidly decaying smooth functions $\mathcal{S}(\mathbb{R}^n)$ is contained in $M_{p,q}^{s,\alpha}(G)$ for all $1 \leq p,q \leq \infty$, $s \in \mathbb{R}$ and $0 \leq \alpha \leq 1$. 
\end{proposition}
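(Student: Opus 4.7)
The plan is to use the explicit generalized $\alpha$-coverings $\mathcal{Q}^{\alpha}(G;N)$ from \eqref{explicit_coverings_cases} (for a chosen lattice $N \subset G$, available by rationality) together with the concrete smooth BAPUs constructed in Example \ref{BAPU_example}. By the equivalent norms collected in Corollary \ref{equivalent_norm}, proving $f \in M_{p,q}^{s,\alpha}(G)$ reduces to showing that the frequency pieces $\|\mathcal{F}^{-1}(\psi_k \mathcal{F} f)\|_{L^p}$ decay faster than any polynomial in $\|k\|$ (respectively in $2^m$ for the Besov case), which will dominate both the polynomial weight and the polynomial density of the lattice when one takes the weighted $l^q$-sum.

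The main analytic input is that $f \in \mathcal{S}(\mathbb{R}^n)$ forces every derivative $D^\gamma \mathcal{F}(f) = \mathcal{F}((-2\pi i x)^\gamma f)$ to be Schwartz, hence to decay faster than any polynomial in $\|\cdot\|_E$. By \cite[Proposition 3.1.37]{Quantization_on_Nilpotent_Lie_Groups}, the homogeneous quasi-norm $\|\cdot\|$ and the Euclidean norm $\|\cdot\|_E$ generate the same topology and admit polynomial comparison estimates, so rapid Euclidean decay transfers to $|D^\gamma \mathcal{F}(f)(\xi)| \leq C_{M,\gamma}(1+\|\xi\|^2)^{-M/2}$ for every $M$. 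For $0 \leq \alpha < 1$, the inclusion $\mathrm{supp}(\psi_k) \subset B^{\|\cdot\|}(\delta_\beta(k), r\|k\|^\beta)$ and the quasi-triangle inequality \eqref{quasi_norm_constant} force $\|\xi\| \gtrsim \|k\|^{1/(1-\alpha)}$ on this support for all $\|k\|$ large enough, while the explicit construction of $\psi_k$ as a $G$-translation and a dilation $D^{G}_{r\|k\|^\beta}$ of a reference function gives $\|D^{\gamma'}\psi_k\|_{L^\infty}$ bounded uniformly in $k$.

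Next I would run the classical Paley-Wiener argument: from
\[x^\gamma \mathcal{F}^{-1}(\psi_k \mathcal{F} f)(x) = (2\pi i)^{-|\gamma|}\mathcal{F}^{-1}\!\left(D^\gamma(\psi_k \mathcal{F} f)\right)(x),\]
a Leibniz expansion together with the two preceding bounds and the volume estimate $|\mathrm{supp}(\psi_k)| \lesssim \|k\|^{\beta Q}$ yields
\[|\mathcal{F}^{-1}(\psi_k \mathcal{F} f)(x)| \leq C_{N,M}\,(1+\|x\|_E)^{-N}\, \|k\|^{\beta Q}\,(1+\|k\|^{2/(1-\alpha)})^{-M/2}\]
for arbitrary $N,M \geq 0$. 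Choosing $N > n/p$ integrates the spatial factor in $L^p$, leaving only a polynomial factor in $\|k\|$, which $M$ can be chosen to dominate. Inserting the resulting estimate into \eqref{first_norm_equivalence} or \eqref{second_norm_equivalence} and using that $\#\{k \in N : \|k\| \leq R\}$ grows polynomially in $R$, the weighted sum converges. The Besov case $\alpha = 1$ is completely analogous: $\mathrm{supp}(\psi_m) \subset D_m(G)$ and $\|\xi\| \asymp 2^m$ on this support, and the scaling \eqref{Besov_BAPU} together with the same Paley-Wiener estimate turns \eqref{third_norm_equivalence} into a convergent geometric series.

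The main obstacle is the bookkeeping in the Paley-Wiener step: one has to keep track of the polynomial growth of the BAPU supports, the behaviour of the derivative bounds of $\psi_k$ under the affine transformations defining the covering, the weight in the norm, and the polynomial growth of the lattice, and verify that they are collectively outpaced by the arbitrarily fast decay extracted from $\mathcal{F}(f) \in \mathcal{S}(\mathbb{R}^n)$. The rationality hypothesis is precisely what supplies the lattice $N$ and the concrete BAPU of Example \ref{BAPU_example} needed to make these bounds quantitative.
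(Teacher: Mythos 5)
Your proposal is correct and follows essentially the same route as the paper: reduce to the explicit coverings, BAPUs, and norms of Corollary \ref{equivalent_norm}, show that the frequency-localized pieces $\left\|\mathcal{F}^{-1}(\psi_k \cdot \mathcal{F}(f))\right\|_{L^p}$ decay faster than any polynomial in $\|k\|$ (respectively in $2^m$), and observe that the weight and the lattice-point count contribute only polynomially before summing in $\ell^1$. The only difference is one of detail: the paper declares the rapid decay of these pieces \textquote{straightforward to see} and invokes Gromov's polynomial growth theorem for the lattice count, whereas you carry out the Paley--Wiener integration-by-parts bookkeeping explicitly (where, incidentally, polynomial-in-$\|k\|$ bounds on $\|D^{\gamma'}\psi_k\|_{L^\infty}$ would already suffice in place of the uniform bound you claim, since the arbitrarily fast decay of $\mathcal{F}(f)$ absorbs any polynomial).
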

\begin{proof}
The embeddings $M_{p,q_1}^{s,\alpha}(G) \subset M_{p,q_2}^{s,\alpha}(G)$ for $q_1 \leq q_2$ implies that it suffices to show the inclusion $\mathcal{S}(\mathbb{R}^n) \subset M_{p,1}^{s,\alpha}(G)$. We consider first the case $0 \leq \alpha < 1$ and use the norms \eqref{second_norm_equivalence} and \eqref{first_norm_equivalence}. Since the Fourier transform of a Schwartz function is again a Schwartz function, we have that \[\epsilon_k := \left\|\mathcal{F}^{-1}\left(\psi_{k} \cdot \mathcal{F}(f)\right)\right\|_{L^{p}} < \infty, \quad k \in N\setminus \{0\}\] for each $f \in \mathcal{S}(\mathbb{R}^n)$ since $\mathcal{S}(\mathbb{R}^n) \subset L^{p}$ for every $1 \leq p \leq \infty$. \par  Fix a homogeneous norm $\|\cdot\|$ on $(\mathbb{R}^n,*_G)$ as we have remarked previously always exists and define the metric 
\begin{equation}
\label{metric_from_homogeneous_norm}
    d_{G}(x,y) := \|x^{-1} *_{G} y\|, \quad  x,y \in \mathbb{R}^n.
\end{equation} 
We claim that the number of points in $N$ that are of distance less than $R > 0$ away from the origin with respect to the metric $d_{G}$ grows with polynomial rate. It is clear that $d_{G}$ is a left-invariant metric on $(\mathbb{R}^n,*_G)$. It restricts to a proper, left-invariant metric $d_{G}|_N$ on $N$. Since $N$ is a finitely generated nilpotent group we know that $d_{G}|_N$ has polynomial growth by Gromov's celebrated polynomial growth theorem \cite{Polynomial_Growth_Paper} and the claim follows.  \par 
It is straightforward to see that the numbers $\epsilon_k$ decay exponentially as the size of $k \in N \setminus \{0\}$ grows with respect to the metric $d_{G}|_N$. The weight \[\left( 1 + \|k\|^{\frac{2}{(1-\alpha)}}\right)^{\frac{s}{2}}\] only contributes polynomially since both $\alpha$ and $s$ are fixed. Hence it follows that $\mathcal{S}(\mathbb{R}^n) \subset M_{p,1}^{s,\alpha}(G) \subset M_{p,q}^{s,\alpha}(G)$ since $l^{1}(N \setminus \{0\})$ contains all rapidly decreasing sequences. The case $\alpha = 1$ is an elementary adaption of the classical proof of the inclusion $\mathcal{S}(\mathbb{R}^n) \subset \mathcal{B}_{p,q}^{s}(\mathbb{R}^n)$. 
\end{proof}

\section{Uniqueness of the Generalized $\alpha$-Modulation Spaces}
\label{sec: Novelty}

\subsection{Preliminary Results}

We now turn to the question regarding the uniqueness of the spaces $M_{p,q}^{s,\alpha}(G)$ for an admissible Lie group $(\mathbb{R}^n,*_G)$. To be able to answer this, we need a stronger statement about the generalized $\alpha$-coverings than what was proved in Proposition \ref{proposition_about_equivalence}. Parts of the Euclidean case (when $G = (\mathbb{R}^n,+)$) of Theorem \ref{weakly_subordinate_result} has been proved in \cite[Lemma 9.5 and Lemma 9.12]{Felix_main} with different methods. We remark that the proof of Theorem \ref{weakly_subordinate_result} does build on Proposition \ref{proposition_about_equivalence} and most of Section \ref{sec: G_alpha_coverings} in a non-trivial way.

\begin{theorem}
\label{weakly_subordinate_result}
Let $(\mathbb{R}^n,*_G)$ be a rational stratified Lie group and consider a generalized $\alpha_1$-covering $\mathcal{P}^{\alpha_1}$ and a generalized $\alpha_2$-covering $\mathcal{P}^{\alpha_2}$ for some $0 \leq \alpha_1, \alpha_2 \leq 1$. Then
\[\left\{\textrm{$\mathcal{P}^{\alpha_1}$ is weakly subordinate to $\mathcal{P}^{\alpha_2}$}\right\} \Longleftrightarrow \left\{\textrm{$\mathcal{P}^{\alpha_1}$ is almost subordinate to $\mathcal{P}^{\alpha_2}$}\right\} \Longleftrightarrow \left\{\alpha_1 \leq \alpha_2\right\}.\]
In particular, the coverings $\mathcal{P}^{\alpha_1}$ and $\mathcal{P}^{\alpha_2}$ are equivalent if and only if $\alpha_1 = \alpha_2$.
\end{theorem}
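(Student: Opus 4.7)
The plan is to verify the chain of implications
\[\alpha_1 \le \alpha_2 \ \Longrightarrow \ \mathcal{P}^{\alpha_1} \le \mathcal{P}^{\alpha_2} \ \Longrightarrow \ \mathcal{P}^{\alpha_1} \text{ is weakly subordinate to } \mathcal{P}^{\alpha_2} \ \Longrightarrow \ \alpha_1 \le \alpha_2,\]
which together with its mirror image (obtained by swapping $\alpha_1$ and $\alpha_2$) yields both the stated bi-implications and, from mutual almost subordination, the final equivalence claim. The middle implication is \cite[Proposition 3.5]{Hans_Grobner}, and the converse to it (needed only to upgrade a weak-subordination conclusion to an almost-subordination conclusion) holds because both coverings consist of open connected sets in $\mathbb{R}^n$, hence path-connected, so \cite[Proposition 3.6]{Hans_Grobner} applies. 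Thus only the two outer implications require new work. Both are driven by a single observation: by the size condition \eqref{size_of_coverings}, any two points $\eta_1, \eta_2 \in P_j^{\alpha}$ satisfy $(1+\|\eta_1\|^2)^{\alpha Q/2} \asymp |P_j^{\alpha}| \asymp (1+\|\eta_2\|^2)^{\alpha Q/2}$, and hence $1+\|\eta_1\| \asymp 1+\|\eta_2\|$ with constants independent of $j$.

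For weak subordination $\Longrightarrow \alpha_1 \le \alpha_2$, suppose towards a contradiction that $\alpha_1 > \alpha_2$. A volume-packing argument shows the existence of elements $P_{i_n}^{\alpha_1} \in \mathcal{P}^{\alpha_1}$ with reference points $\xi_{i_n}$ satisfying $\|\xi_{i_n}\| \to \infty$: otherwise infinitely many sets of volume uniformly bounded below would fit with bounded overlap into a bounded region. For every $j$ with $P_j^{\alpha_2} \cap P_{i_n}^{\alpha_1} \neq \emptyset$, choosing $\zeta$ in the intersection and applying the observation above to both sets yields $\|\xi_j\| \asymp \|\zeta\| \asymp \|\xi_{i_n}\|$, hence $|P_j^{\alpha_2}| \lesssim (1+\|\xi_{i_n}\|)^{\alpha_2 Q}$. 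Since $\mathcal{P}^{\alpha_2}$ covers $\mathbb{R}^n$, every point of $P_{i_n}^{\alpha_1}$ lies in some such $P_j^{\alpha_2}$, whence
\[(1+\|\xi_{i_n}\|)^{\alpha_1 Q} \asymp |P_{i_n}^{\alpha_1}| \le \Big|\bigcup_{j} P_j^{\alpha_2}\Big| \le \sum_{j} |P_j^{\alpha_2}| \lesssim M_n \cdot (1+\|\xi_{i_n}\|)^{\alpha_2 Q},\]
where $M_n$ counts the $j$'s intersecting $P_{i_n}^{\alpha_1}$. Therefore $M_n \gtrsim (1+\|\xi_{i_n}\|)^{(\alpha_1 - \alpha_2) Q} \to \infty$, contradicting weak subordination.

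For $\alpha_1 \le \alpha_2 \Longrightarrow \mathcal{P}^{\alpha_1} \le \mathcal{P}^{\alpha_2}$, fix $P_i^{\alpha_1}$, choose any $\zeta \in P_i^{\alpha_1}$, and select $j(i)$ with $\zeta \in P_{j(i)}^{\alpha_2}$. Combining the size condition with the ratio condition \eqref{ratio_condition} gives $r^{\|\cdot\|}(P_{j(i)}^{\alpha_2}) \asymp R^{\|\cdot\|}(P_{j(i)}^{\alpha_2}) \asymp (1+\|\zeta\|)^{\alpha_2}$ and $R^{\|\cdot\|}(P_i^{\alpha_1}) \asymp (1+\|\zeta\|)^{\alpha_1}$. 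Using the quasi-triangle inequality \eqref{quasi_norm_constant} one then has $P_i^{\alpha_1} \subset B^{\|\cdot\|}(\zeta, C(1+\|\zeta\|)^{\alpha_1}) \subset B^{\|\cdot\|}(\zeta, C(1+\|\zeta\|)^{\alpha_2}) \subset B^{\|\cdot\|}(\zeta, C' \cdot r^{\|\cdot\|}(P_{j(i)}^{\alpha_2}))$ for uniform constants, where the middle inclusion uses $\alpha_1 \le \alpha_2$. A chaining lemma then produces a uniform $k \in \mathbb{N}$ with $P_i^{\alpha_1} \subset P_{j(i)}^{\alpha_2, k*}$: the boxed comparability ensures every neighbor of $P_{j(i)}^{\alpha_2}$ has $r^{\|\cdot\|}$-radius $\asymp r^{\|\cdot\|}(P_{j(i)}^{\alpha_2})$, so an induction on chain length shows $P_{j(i)}^{\alpha_2, k*}$ contains a $\|\cdot\|$-ball around $\zeta$ of radius $\asymp k \cdot r^{\|\cdot\|}(P_{j(i)}^{\alpha_2})$.

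The one step I anticipate requiring real care is this chaining lemma: iterating neighborhoods in the $\|\cdot\|$-metric means repeatedly invoking the quasi-triangle constant of \eqref{quasi_norm_constant}, which can inflate constants multiplicatively. The cleanest workaround, which I would adopt at the outset, is to invoke Proposition \ref{proposition_about_equivalence} to replace the quasi-norm by a genuine homogeneous norm, after which the chaining estimate becomes a standard additive induction.
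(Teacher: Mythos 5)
Your architecture (a cycle of three implications, closed by the volume-packing argument) is sound, and your proof that weak subordination forces $\alpha_1\leq\alpha_2$ is correct; it is in fact a cleaner, unified version of what the paper does, since it handles the Besov case $\alpha_1=1$ and the intermediate case $0\leq\alpha_2<\alpha_1<1$ by one and the same volume count, and it works directly from the axioms \eqref{size_of_coverings} and \eqref{ratio_condition} without first reducing to the explicit coverings $\mathcal{Q}^{\alpha}(G;N)$ via Proposition \ref{proposition_about_equivalence}. (One small caveat: your ``single observation'' $1+\|\eta_1\|\asymp 1+\|\eta_2\|$ does not follow from \eqref{size_of_coverings} when $\alpha=0$, where that condition only says $|P_j^0|\asymp 1$; there you need the ratio condition \eqref{ratio_condition} instead, or, as in the place you actually use it, the conclusion $|P_j^{0}|\lesssim 1$ is trivially true anyway.)

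The genuine gap is in the forward direction, specifically in the chaining lemma. The inductive claim that $P_{j(i)}^{\alpha_2,k*}$ contains a $\|\cdot\|$-ball around $\zeta$ of radius $\asymp k\cdot r^{\|\cdot\|}(P_{j(i)}^{\alpha_2})$ is false in general: take $\alpha_2=1$ and the Besov covering $\mathcal{B}(G)$, with $\zeta=(2^m,0,\dots,0)\in D_m(G)$, so that $r^{\|\cdot\|}(D_m(G))\asymp 2^m$. Then $B^{\|\cdot\|}(\zeta,C'2^m)$ contains the origin once $C'\gtrsim 1$, whereas $D_m(G)^{k*}$ omits the ball $B^{\|\cdot\|}(0,2^{m-k-1})$ for every fixed $k<m$; so no uniform $k$ works for the enlarged ball. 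The problem is precisely the enlargement in your middle inclusion from radius $(1+\|\zeta\|)^{\alpha_1}$ to radius $(1+\|\zeta\|)^{\alpha_2}$: for $\alpha_2=1$ this ball leaves the region where $1+\|\cdot\|\asymp 1+\|\zeta\|$, and the covering elements of $\mathcal{P}^{\alpha_2}$ it meets are no longer of comparable size, so neither the ``every neighbour has comparable radius'' step nor the linear-growth induction survives. Replacing the quasi-norm by a homogeneous norm, as you propose, does not touch this issue. The repair is the route you already mention but set aside: show instead that $\alpha_1\leq\alpha_2$ implies $\mathcal{P}^{\alpha_1}$ is \emph{weakly} subordinate to $\mathcal{P}^{\alpha_2}$ --- every $P_j^{\alpha_2}$ meeting $P_i^{\alpha_1}$ contains a point $w\in P_i^{\alpha_1}$ with $1+\|w\|\asymp 1+\|\zeta\|$, hence has volume $\asymp(1+\|\zeta\|)^{\alpha_2 Q}$ and sits inside $B^{\|\cdot\|}(\zeta,C(1+\|\zeta\|)^{\alpha_2})$, so admissibility bounds their number uniformly --- and then upgrade to almost subordination by \cite[Proposition 3.6]{Hans_Grobner}, since all sets are open and connected. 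This is exactly how the paper closes this direction, and with that substitution your proof is complete.
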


\begin{proof}
Since all the coverings in question consist of open and connected sets, it suffices to show the second equivalence as pointed out previously. Moreover, since we have proved in Proposition \ref{proposition_about_equivalence} that any two generalized $\alpha$-coverings are equivalent, it suffices to consider the explicit coverings $\mathcal{Q}^{\alpha_1}(G)$ and $\mathcal{Q}^{\alpha_2}(G)$ in \eqref{explicit_coverings_cases}. Let us fix a lattice $N$ in $(\mathbb{R}^n,*_G)$ and a homogeneous quasi-norm $\|\cdot\|$. \par 
We begin by considering the Besov case $\alpha_2 = 1$. We note that the size of the sets $D_{m}(G)$ grows exponentially with respect to $m \in \mathbb{N}_0$. However, the size of the elements in $\mathcal{Q}^{\alpha_1} = (Q_{n}^{\alpha_1}(G))_{n \in N \setminus \{0\}}$ for $0 \leq \alpha_1 < 1$ grows polynomially when we order the index set $N \setminus \{0\}$ in a way such that $m \leq n$ whenever $\|m\| \leq \|n\|$. Hence the number \[\#\left\{n \in N \setminus \{0\} \, \Big| \, D_{m}(G) \cap Q_{n}^{\alpha_1}(G) \neq \emptyset \right\}\] will grow unbounded as $m$ increases. This shows that $\mathcal{B}(G)$ is not weakly subordinate to any $\mathcal{Q}^{\alpha_1}(G)$ for $0 \leq \alpha_1 < 1$. \par 
Next, we need to show that $\mathcal{Q}^{\alpha_1}(G)$ is weakly subordinate to the Besov covering $\mathcal{B}(G)$ whenever $0 \leq \alpha_1 < 1$. Pick the center point \[\delta_{\beta}(n) = n\|n\|^{\frac{\alpha_1}{1-\alpha_1}} \in Q_{n}^{\alpha_1}(G) := B^{\|\cdot\|}(\delta_{\beta}(n),r\|n\|^{\beta}), \quad \beta :=\frac{\alpha_1}{1 - \alpha_1}.\] Then if $2^{m-1} \leq \|\delta_{\beta}(n)\| \leq 2^{m+1}$ for some $m \in \mathbb{N}$ we have that $2^{(m-1)(1-\alpha_1)} \leq \|n\| \leq 2^{(m+1)(1-\alpha_1)}$. Hence for all $n \in N \setminus \{0\}$ where $\|n\|$ is sufficiently large it follows that \[\# \left\{m \in \mathbb{N}_{0} \, \Big| \, Q_{n}^{\alpha_1}(G) \cap D_{m}(G) \neq \emptyset \right\} \in \{1,2\}.\] Since there are only a finite number of elements $n \in N \setminus \{0\}$ with norm less than a fixed tolerance, we have that $\mathcal{Q}^{\alpha_1}(G)$ is weakly subordinate to the Besov covering $\mathcal{B}(G)$. Showing that $\mathcal{Q}^{\alpha_1}(G)$ is weakly subordinate to $\mathcal{Q}^{\alpha_2}(G)$ whenever $0 \leq \alpha_1 \leq \alpha_2 < 1$ is straightforward since the function 
\begin{equation}
\label{increasing_function}
    x \longmapsto \frac{x}{1 - x}, \qquad x \in [0,1),
\end{equation}
is increasing. It only remains to show that $\mathcal{Q}^{\alpha_2}(G)$ is not weakly subordinate to $\mathcal{Q}^{\alpha_1}(G)$ when $0 \leq \alpha_1 < \alpha_2 < 1$. \par 
We use the notation \[A_{\alpha_1}^{\alpha_2}(n) := \# \left\{l \in N \setminus \{0\} \, \Big| \, Q_{n}^{\alpha_2}(G) \cap Q_{l}^{\alpha_1}(G) \neq \emptyset \right\},\] and will give an iterated argument to show that there is no uniform bound on $A_{\alpha_1}^{\alpha_2}(n)$ for all $n \in N \setminus \{0\}$. The size of the elements in $\mathcal{Q}^{\alpha_1}(G)$ is given by \[| Q_{n}^{\alpha_1}(G)| = \Big| B^{\|\cdot\|}\left(\delta_{\beta}(n),r\|n\|^{\beta}\right)\Big| = \left(r\|n\|^{\frac{\alpha_1}{1 - \alpha_1}}\right)^{Q}\cdot \mu, \qquad \mu := \Big|B^{\|\cdot\|}(0,1)\Big|,\] where $Q$ is the homogeneous dimension of $(\mathbb{R}^n,*_G)$. Since the function given in \eqref{increasing_function} is increasing there exists for every $\epsilon > 0$ a threshold $R > 0$ such that for $\|n\| \geq R$ we have \[\frac{| Q_{n}^{\alpha_1}(G)|}{| Q_{n}^{\alpha_2}(G)|} = \|n\|^{Q\left(\frac{\alpha_1}{1 - \alpha_1} - \frac{\alpha_2}{1 - \alpha_2}\right)} < \epsilon.\] The fact that $N$ is a uniform lattice gives that there is a number $C_{N} > 0$ such that $\|m^{-1}n\| \leq C_{N}$ for every $m \in N \setminus \{0\}$ that is a neighbour of $n$ with respect to the covering $\mathcal{Q}^{\alpha_2}(G)$. Hence \[|Q_{n}^{\alpha_2}(G)| \simeq |Q_{m}^{\alpha_2}(G)|,\] for all such $m$. Therefore we can, by increasing the threshold $R$, find a sequence $n_k \in N \setminus \{0\}$ such that $A_{\alpha_1}^{\alpha_2}(n_k) \to \infty$. This implies that $\mathcal{Q}^{\alpha_2}$ is not weakly subordinate to $\mathcal{Q}^{\alpha_1}(G)$ whenever $0 \leq \alpha_1 < \alpha_2 < 1$. 
\end{proof}

Before we turn to the uniqueness result we need to investigate the generalized $\alpha$-coverings in the two extreme cases $\alpha \in \{0,1\}$ more thoroughly. To do this, we will briefly review a procedure originating in \cite{Hans_Grobner} and more recently investigated in \cite{ThesisRene} and \cite{Berge} that associates a metric space to any concatenation. Although investigating the extreme cases $\alpha \in \{0,1\}$ could be done without this extra machinery, we will need this approach in Section \ref{sec: Metric_Geometry_G_alpha_Modulation_Spaces} anyway and hence go through the necessary definitions here. \par
Associated to any concatenation $\mathcal{Q}$ on $\mathbb{R}^n$ is a metric $d_{\mathcal{Q}}$ on $\mathbb{R}^n$ that reflects the global properties of $\mathcal{Q}$. For two distinct points $x,y \in \mathbb{R}^n$ we define the distance $d_{\mathcal{Q}}(x,y)$ to be the minimal number $k$ such that there is a sequence $Q_{i_1}, \dots, Q_{i_k}$ connecting $x$ and $y$. To be more formal, we require that $x \in Q_{i_1}$, $y \in Q_{i_k}$ and that $Q_{i_j} \cap Q_{i_{j+1}} \neq \emptyset$ for every $j = 1, \dots, k-1$ and that no such sequence of length $k-1$ exists. We extend the definition by $d_{\mathcal{Q}}(x,x) = 0$ for all $x \in \mathbb{R}^n$ and refer to $(\mathbb{R}^n,d_{\mathcal{Q}})$ as the \textit{associated metric space} to the concatenation $\mathcal{Q}$. Comparing two metric spaces $(\mathbb{R}^n,d_{\mathcal{Q}})$ and $(\mathbb{R}^m,d_{\mathcal{P}})$ corresponding to different coverings $\mathcal{Q}$ and $\mathcal{P}$ is done by employing the notion of quasi-isometric embeddings. 
\begin{definition}
A \textit{quasi-isometric embedding} between $(\mathbb{R}^n,d_{\mathcal{Q}})$ and $(\mathbb{R}^m,d_{\mathcal{P}})$ is a map $f:(\mathbb{R}^n,d_{\mathcal{Q}}) \to (\mathbb{R}^m,d_{\mathcal{P}})$ with fixed parameters $L,C > 0$ such that \[ \frac{1}{L}d_{\mathcal{Q}}(x,y) - C \leq d_{\mathcal{P}}(f(x),f(y)) \leq Ld_{\mathcal{Q}}(x,y) + C,\] for all $x,y \in \mathbb{R}^{n}$. We say that $f$ is a \textit{quasi-isometry} if additionally $f(\mathbb{R}^n)$ is a \textit{net} in $\mathbb{R}^m$, that is, we have the uniform bound \[\adjustlimits \sup_{y \in \mathbb{R}^m} \inf_{x \in f(\mathbb{R}^n)} d_{\mathcal{P}}\left(x,y\right) < \infty.\]
\end{definition}

\begin{proposition}
\label{Hans_result}
Let $\mathcal{Q}$ and $\mathcal{P}$ be two concatenations on $\mathbb{R}^n$. Then $\mathcal{Q}$ is almost subordinate to $\mathcal{P}$ if and only if the identity map $Id:(\mathbb{R}^n,d_{\mathcal{Q}}) \to (\mathbb{R}^n,d_{\mathcal{P}})$ is Lipschitz continuous. Moreover, the concatenations $\mathcal{Q}$ and $\mathcal{P}$ are equivalent if and only if the identity map $Id:(\mathbb{R}^n,d_{\mathcal{Q}}) \to (\mathbb{R}^n,d_{\mathcal{P}})$ is a quasi-isometry.
\end{proposition}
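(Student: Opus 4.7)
The plan is to prove both equivalences by translating chain-length bounds from one concatenation to the other, handling the first equivalence carefully and then reducing the second to it. Throughout, I will implicitly identify $P_j^{k*}$ (and $Q_i^{k*}$) with the union of its member sets when convenient, so that inclusions like $Q_i \subset P_j^{k*}$ make sense.

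To show that almost subordination implies Lipschitz continuity of the identity map, I would argue as follows. Suppose $\mathcal{Q}$ is almost subordinate to $\mathcal{P}$ with constant $k$, and let $x, y \in \mathbb{R}^n$ with $d_{\mathcal{Q}}(x,y) = m$. Pick a minimising chain $Q_{i_1}, \dots, Q_{i_m}$ and, for each $l$, an index $j_l \in J$ with $Q_{i_l} \subset P_{j_l}^{k*}$. The observation that glues everything together is that if $z \in P_{j_l}^{k*}$, then $z$ can be reached from any point of $P_{j_l}$ by a chain in $\mathcal{P}$ of length at most $k+1$, directly from the inductive definition of the $k$-star neighbourhood. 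Combining this with the triangle inequality for $d_{\mathcal{P}}$ (chains can be spliced at a common point), and using that consecutive $Q_{i_l}, Q_{i_{l+1}}$ share a point, I get that consecutive centres $P_{j_l}, P_{j_{l+1}}$ lie within $d_{\mathcal{P}}$-distance at most $2k+2$ of one another, with a similar estimate bounding the distance from $x$ to $P_{j_1}$ and from $P_{j_m}$ to $y$. Concatenating yields a chain in $\mathcal{P}$ from $x$ to $y$ of length at most $(2k+2)m$, whence $d_{\mathcal{P}}(x,y) \leq (2k+2) \cdot d_{\mathcal{Q}}(x,y)$.

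For the converse, I would suppose the identity is $L$-Lipschitz and manufacture an almost-subordination constant. Fix $Q_i \in \mathcal{Q}$, pick any $x_i \in Q_i$, and choose $j \in J$ with $x_i \in P_j$. Since the single-element chain $Q_i$ witnesses $d_{\mathcal{Q}}(x_i, y) \leq 1$ for every $y \in Q_i$, the Lipschitz bound gives $d_{\mathcal{P}}(x_i, y) \leq L$; unwinding the definition of $d_{\mathcal{P}}$ places $y$ inside $P_j^{(\lceil L \rceil - 1)*}$, so $Q_i \subset P_j^{(\lceil L \rceil - 1)*}$ as required. The second equivalence follows immediately: equivalence of coverings means the identity is Lipschitz in both directions, i.e.\ bi-Lipschitz, which is in particular a quasi-isometry (the net condition is automatic since the identity is surjective); conversely, a quasi-isometry with multiplicative constant $L$ and additive constant $C$ still gives $d_{\mathcal{P}}(x,y) \leq L + C$ whenever $d_{\mathcal{Q}}(x,y) \leq 1$, which is enough to rerun the converse argument and deduce almost subordination in both directions.

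The step I expect to be the main obstacle is the forward direction of the first equivalence: specifically, the combinatorial bookkeeping that turns the set-theoretic inclusion $Q_{i_l} \subset P_{j_l}^{k*}$ into a quantitative chain-length bound in $\mathcal{P}$. Once this is set up cleanly via induction on $k$, the remaining steps reduce to routine applications of the triangle inequality for the chain distance, and the second equivalence follows by a soft reformulation.
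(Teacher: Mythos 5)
Your argument is correct, and it is worth noting that the paper itself does not prove this proposition: it is quoted from the literature (\cite[Proposition 3.8]{Hans_Grobner}, rephrased in quasi-isometric language in \cite{ThesisRene} and \cite{Berge}), so your chain-counting proof supplies details the paper omits rather than diverging from an argument it contains. The forward direction is sound: from $Q_{i_l} \subset P_{j_l}^{k*}$ you correctly extract that any point of $P_{j_l}^{k*}$ is joined to any point of $P_{j_l}$ by a $\mathcal{P}$-chain of length at most $k+1$, and splicing these chains at the common points of consecutive $Q_{i_l}$'s gives $d_{\mathcal{P}}(x,y)\leq(2k+2)\,d_{\mathcal{Q}}(x,y)$; the triangle inequality you invoke does hold for the chain metric, since two chains meeting at a point concatenate into a chain. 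The converse and the reduction of the second equivalence to the first are also fine: boundedness of $d_{\mathcal{P}}$ on pairs with $d_{\mathcal{Q}}\leq 1$ is exactly almost subordination, equivalence of the coverings gives a bi-Lipschitz identity map (a quasi-isometry, surjectivity making the net condition automatic), and the lower quasi-isometry bound lets you rerun the same argument to get subordination in the other direction. The only slip is an off-by-one in the converse: a $\mathcal{P}$-chain of length $r$ from $x_i$ to $y$ need not begin at your chosen $P_j$, only at a set meeting $P_j$ at $x_i$, so you can only conclude $y \in P_j^{r*}$ rather than $P_j^{(r-1)*}$ (take $L=1$: a single set containing both $x_i$ and $y$ need not equal $P_j$). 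This costs one star and is immaterial, since almost subordination only requires some finite constant.
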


The proposition above originates from \cite[Proposition 3.8]{Hans_Grobner} and was phrased in the language of quasi-isometries first for open sets of a Euclidean space in \cite{ThesisRene} and for more general coverings in \cite{Berge}. It shows that the metric space approach extends the notion of almost subordination to coverings defined on different Euclidean spaces. We now use the metric space viewpoint of coverings to examine the boundary cases $\alpha \in \{0,1\}$. 

\begin{proposition}
\label{Besov_are_all_the_same}
Consider the rational stratified Lie group $(\mathbb{R}^n,*_G)$. The Besov covering $\mathcal{B}(G)$ is equivalent to the Euclidean Besov covering $\mathcal{B}(\mathbb{R}^n)$ only when $(\mathbb{R}^n,*_G)$ is isomorphic to $(\mathbb{R}^n,+)$. Similarly, the uniform covering $\mathcal{U}(G)$ is only equivalent to the Euclidean uniform covering $\mathcal{U}(\mathbb{R}^n)$ when $(\mathbb{R}^n,*_G)$ is isomorphic to $(\mathbb{R}^n,+)$.
\end{proposition}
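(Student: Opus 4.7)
The proof plan rests on Proposition \ref{Hans_result}, which identifies equivalence of concatenations $\mathcal{Q},\mathcal{P}$ on $\mathbb{R}^n$ with the identity map $Id:(\mathbb{R}^n,d_{\mathcal{Q}}) \to (\mathbb{R}^n,d_{\mathcal{P}})$ being a quasi-isometry. For both statements I intend to exhibit concrete obstructions showing this fails when $G$ has step $s \geq 2$.

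For the uniform case I would first pass from the combinatorial chain-metric $d_{\mathcal{U}(G)}$ to the continuous group metric $d_G(x,y) := \|y^{-1} *_G x\|$, using a homogeneous \emph{norm} (which exists by the discussion after \eqref{quasi_norm_constant}). Because $\mathcal{U}(G)$ consists of unit $\|\cdot\|$-balls centered at a uniformly distributed family of points, a chain of length $k$ in $\mathcal{U}(G)$ yields $d_G \leq 2k$ by iterating the triangle inequality, while conversely $d_G(x,y) = R$ permits a chain of length $O(R)$ along a geodesic segment, using the uniform density of the lattice of centers and the admissibility bound. This shows $d_{\mathcal{U}(G)}$ is quasi-isometric via the identity to $d_G$, and analogously $d_{\mathcal{U}(\mathbb{R}^n)}$ is quasi-isometric to $\|\cdot\|_E$. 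Hence equivalence of the two uniform coverings reduces to the identity $(\mathbb{R}^n, d_G) \to (\mathbb{R}^n, \|\cdot\|_E)$ being a quasi-isometry, which I would kill by a volume argument: dilation invariance of Haar measure gives $|B_{d_G}(0,R)| \asymp R^Q$, while $|B_{\|\cdot\|_E}(0,R)| \asymp R^n$. The quasi-isometry inclusion $B_{\|\cdot\|_E}(0,R) \subset B_{d_G}(0, LR+C)$ forces $R^n \lesssim R^Q$ for large $R$, so $n \leq Q$; reversing the inclusion gives $Q \leq n$. Hence $Q = n$, and since $Q = \sum_j j\cdot\dim(V_j)$, this compels $\dim(V_j)=0$ for $j\geq 2$, i.e.\ $(\mathbb{R}^n,*_G) \simeq (\mathbb{R}^n,+)$.

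For the Besov case I would instead construct a direct obstruction using the homogeneous quasi-norm $\|\cdot\|_2$ from \eqref{spesific_homogeneous_quasi_norm}. Fix $m \in \mathbb{N}$ and set $x_m := (2^m, 0, \ldots, 0)$, with first coordinate in $V_1$, and $y_m := (0, \ldots, 0, 2^{sm})$, with last coordinate in $V_s$. A direct computation yields $\|x_m\|_2 = \|y_m\|_2 = 2^m$, so both points lie in the same homogeneous dyadic shell $D_m(G)$ and therefore $d_{\mathcal{B}(G)}(x_m,y_m)=1$ for every $m$. On the Euclidean side, however, $\|x_m\|_E = 2^m$ and $\|y_m\|_E = 2^{sm}$, placing them in shells $D_m(\mathbb{R}^n)$ and $D_{sm}(\mathbb{R}^n)$ respectively, giving $d_{\mathcal{B}(\mathbb{R}^n)}(x_m,y_m) \asymp (s-1)m \to \infty$. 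If $s \geq 2$, no constants $L,C$ can satisfy $d_{\mathcal{B}(\mathbb{R}^n)}(x_m,y_m) \leq L\, d_{\mathcal{B}(G)}(x_m,y_m) + C$, so the identity is not even Lipschitz from $d_{\mathcal{B}(G)}$ to $d_{\mathcal{B}(\mathbb{R}^n)}$, hence certainly not a quasi-isometry. The main technical step I expect to grind on is the preliminary bridge in the uniform case between $d_{\mathcal{U}(G)}$ and $d_G$, which has to carefully combine the admissibility constant of $\mathcal{U}(G)$, the true triangle inequality of a homogeneous norm, and the uniform density of the lattice; once that bridge is established, both concluding steps (the volume comparison for the uniform case and the shell-collision argument for the Besov case) are short.
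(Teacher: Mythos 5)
Your proposal is correct. The Besov half is essentially the paper's own argument: the paper also uses $\|\cdot\|_2$, places $p(m)=(2^m,0,\dots,0)$ and $q(m)=(0,\dots,0,2^{sm+1})$ in the same shell $D_m(G)$ so that $d_{\mathcal{B}(G)}=1$ while $d_{\mathcal{B}(\mathbb{R}^n)}\to\infty$, and concludes via Proposition \ref{Hans_result}; your choice $2^{sm}$ instead of $2^{sm+1}$ is immaterial.

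The uniform half is where you diverge. The paper simply cites \cite[Theorem 3.6]{Berge}, which says that the uniform coverings of two rational stratified groups can only be equivalent when the homogeneous dimensions agree, and then concludes $Q=n\Rightarrow\dim V_j=0$ for $j\geq 2$. You instead reprove the relevant special case from scratch: bridge $d_{\mathcal{U}(G)}$ to the group metric $d_G$, and then compare volume growth $R^Q$ versus $R^n$ of balls under a quasi-isometric identity. The volume comparison itself is sound (it is the same mechanism as equation \eqref{comparable_sizes} in Proposition \ref{proposition_about_equivalence}), and the endgame $Q=n\Leftrightarrow$ abelian is exactly the paper's. What your route buys is self-containedness; what it costs is the bridge, which is not quite as routine as iterating the triangle inequality. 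Two points need care there. First, the reverse direction (chain of length $O(R)$ from $d_G(x,y)=R$) requires $d_G$ to be quasi-geodesic; this holds because any homogeneous norm is equivalent to the Carnot--Carath\'eodory norm by Lemma \ref{homogeneous_quasi_norm_result}, and the latter is geodesic. Second, to make consecutive unit balls in the chain actually intersect you need the centers $\{x_i\}$ to be $r_0$-dense for some $r_0<1$, not merely $1$-dense; this is how the covering is constructed in \cite{Hans_2} (one takes a $V$-dense, $W$-separated family with $V$ strictly smaller than the ball defining the covering), so the detail is fixable but should be stated. With those two points supplied, your argument is a complete and more elementary substitute for the citation.
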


\begin{proof}
We can by Lemma \ref{Besov_covering_lemma} choose to work with the homogenous quasi-norm $\|\cdot\|_{2}$ given in  \eqref{spesific_homogeneous_quasi_norm}. It is straightforward to check that the points $p(m) := (2^{m},0,\dots,0)$ and $q(m) := (0,\dots, 2^{sm+1})$ are both in $D_{m}(G)$ for all $m \in \mathbb{N}$, where $s$ is the step of $(\mathbb{R}^n,*_G)$. Hence $d_{\mathcal{B}(G)}(p(m),q(m)) = 1$. However, the distance $d_{\mathcal{B}(\mathbb{R}^n)}(p(m),q(m))$ tends to infinity as $m$ increases as long as $s > 1$. Hence the identity map \[Id:(\mathbb{R}^n,d_{\mathcal{B}(G)}) \to (\mathbb{R}^n,d_{\mathcal{B}(\mathbb{R}^n)})\] is not a quasi-isometry and we can apply Proposition \ref{Hans_result} to obtain that the concatenations $\mathcal{B}(G)$ and $\mathcal{B}(\mathbb{R}^n)$ are not equivalent. When $s = 1$ there is only one layer in the stratification \eqref{Lie_algebra_decomposition} and it is clear that $(\mathbb{R}^n,*_G) \simeq (\mathbb{R}^n,+)$ in that case.\par 
The second statement follows from the more general statement proved in \cite[Theorem 3.6]{Berge} implying that the uniform coverings $\mathcal{U}(G)$ and $\mathcal{U}(H)$ of two rational stratified Lie groups $(\mathbb{R}^n,*_G)$ and $(\mathbb{R}^n,*_H)$ can only be equivalent if the groups have the same homogeneous dimension. The homogeneous dimension $Q$ of $(\mathbb{R}^n,*_G)$ satisfies $Q = n$ only when the stratification \eqref{Lie_algebra_decomposition} has only one layer. Hence we conclude that $(\mathbb{R}^n,*_G) \simeq (\mathbb{R}^n,+)$ and the result follows.
\end{proof}

\begin{remark}
The Besov covering $\mathcal{B}(G)$ of a stratified Lie group $(\mathbb{R}^n,*_G)$ fits in a larger class of coverings investigated in \cite{Hartmut_Besov} known as inhomogeneous covering induced by an expansive matrix. An \textit{expansive matrix} $A$ is a matrix such that all its eigenvalues have norm strictly greater than one. Consider a collection $\mathcal{C} = (C_{j})_{j \in \mathbb{N}_0}$ such that $C_0$ and $C_1$ are the closures of two bounded and open sets and $C_{j} = A^{j-1}(C_1)$ for $j \geq 1$. If \[\bigcup_{j = 0}^{\infty}C_{j} = \mathbb{R}^n,\] then the collection $\mathcal{C}$ is called an \textit{inhomogeneous covering} induced by the expansive matrix $A$. For our Besov coverings $\mathcal{B}(G)$, we have $C_0 = \overline{D_{0}(G)}$, $C_1 = \overline{D_{1}(G)}$, and \[A = \begin{pmatrix}
    2^{v_1} & & \\
    & \ddots & \\
    & & 2^{v_n}
\end{pmatrix}, \qquad v_j := \textrm{deg}(x_j).\]
The attentive reader will notice that there is a small discrepancy as the sets in the covering $\mathcal{B}(G)$ are open, while the one described above consists of the closures of these elements. This is of minor importance as the two versions are clearly equivalent.  Written in this framework, we could use \cite[Lemma 5.7 (b)]{Hartmut_Besov} to derive the first statement in Proposition \ref{Besov_are_all_the_same}.
\end{remark}

We need a final lemma regarding the weights appearing in Corollary \ref{equivalent_norm} before answering the uniqueness of the generalized $\alpha$-modulation spaces $M_{p,q}^{s,\alpha}(G)$. 

\begin{lemma}
\label{lemma_moderate_weights}
Let $(\mathbb{R}^n,*_G)$ be an admissible Lie group with a lattice $N$ and denote by $\mathcal{Q}^{\alpha}(G)$ the explicit generalized $\alpha$-coverings given in \eqref{explicit_coverings_cases}. The weights \[N \setminus \{0\} \ni k \longmapsto \left(1 + \|k\|^{\frac{2}{1-\alpha}}\right)^{\frac{s}{2}}, \quad 0 \leq \alpha < 1,\] are $\mathcal{Q}^{\alpha}(G)$-moderate and the weight \[\mathbb{N}_{0} \ni m \longmapsto 2^{ms}\] is $\mathcal{B}(G)$-moderate.
\end{lemma}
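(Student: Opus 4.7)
The plan is to reduce both statements to the single observation that neighboring index values in the respective coverings have comparable magnitudes, after which moderateness is immediate from elementary estimates.

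For the case $0 \leq \alpha < 1$, I would first establish the following claim: whenever $Q_k^\alpha(G;N) \cap Q_l^\alpha(G;N) \neq \emptyset$ for $k, l \in N \setminus \{0\}$, one has $\|k\| \asymp \|l\|$ with constants independent of $k$ and $l$. For $0 < \alpha < 1$, this follows by applying the argument already present in the proof of Proposition \ref{proposition_about_equivalence} to the covering $\mathcal{Q}_r^\alpha(G;N)$ against itself: intersecting elements have comparable inner and outer radii, and here the radii are $r\|k\|^\beta$ and $r\|l\|^\beta$ with $\beta = \alpha/(1-\alpha)$, so $\|k\|^\beta \asymp \|l\|^\beta$ and hence $\|k\| \asymp \|l\|$. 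For $\alpha = 0$, the balls have constant radius $r$, so intersection yields $\|k^{-1} *_G l\| \leq 2Cr$ via the quasi-triangle constant from \eqref{quasi_norm_constant}; combining with the positive lower bound $c_0 := \inf_{n \in N \setminus \{0\}} \|n\| > 0$, which exists because $N$ is discrete and $\|\cdot\|$ is continuous with $\|g\| = 0$ only at the identity, one gets $\|l\| \leq C\|k\| + 2C^2 r \leq (C + 2C^2 r/c_0)\|k\|$ and symmetrically, so again $\|k\| \asymp \|l\|$.

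Given this comparability, the moderateness of the weight $k \mapsto (1+\|k\|^{2/(1-\alpha)})^{s/2}$ follows by a one-line elementary bound: raising $\|k\| \asymp \|l\|$ to the power $2/(1-\alpha)$ gives $\|k\|^{2/(1-\alpha)} \asymp \|l\|^{2/(1-\alpha)}$, hence $1 + \|k\|^{2/(1-\alpha)} \asymp 1 + \|l\|^{2/(1-\alpha)}$, and finally raising to the power $s/2$ yields a bound of the form
\[
\frac{(1 + \|k\|^{2/(1-\alpha)})^{s/2}}{(1 + \|l\|^{2/(1-\alpha)})^{s/2}} \leq \mathcal{C}_{\omega},
\]
with $\mathcal{C}_{\omega}$ independent of the pair $(k,l)$.

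For $\alpha = 1$, the argument is even more direct. From the definition, $D_0(G)$ is a ball and $D_m(G)$ for $m \geq 1$ is the annulus $\{x : 2^{m-1} < \|x\| < 2^{m+1}\}$, so $D_m(G) \cap D_l(G) \neq \emptyset$ forces $|m - l| \leq 1$. Consequently
\[
\frac{2^{ms}}{2^{ls}} = 2^{(m-l)s} \leq 2^{|s|}
\]
whenever the corresponding elements intersect, and the weight $m \mapsto 2^{ms}$ is $\mathcal{B}(G)$-moderate with constant $2^{|s|}$.

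There is no genuinely hard step here; the only mildly fiddly point is the $\alpha = 0$ subcase, where the proof of Proposition \ref{proposition_about_equivalence} does not directly give multiplicative comparability of $\|k\|$ and $\|l\|$ (the radii are equal rather than proportional to a power of $\|k\|$), which is why the short detour through the uniform lower bound $c_0 > 0$ coming from discreteness of the lattice is needed.
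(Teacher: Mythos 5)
Your proof is correct and follows essentially the same strategy as the paper's: reduce everything to the observation that intersecting elements of the covering have comparable index magnitudes, then conclude by an elementary estimate on the weight ratio; the Besov case is handled identically in both. The one substantive difference is that the paper only writes out the cases $\alpha = 0$ and $\alpha = 1$ explicitly and outsources the intermediate case $0 < \alpha < 1$ to the literature, whereas you supply a short self-contained argument for it by running the volume-comparison from Proposition \ref{proposition_about_equivalence} against the covering itself to get $\|k\|^{\beta} \asymp \|l\|^{\beta}$ and hence $\|k\| \asymp \|l\|$ --- this is a genuine (if modest) improvement in completeness. Your handling of $\alpha = 0$ via the uniform lower bound $c_0 = \inf_{n \in N \setminus \{0\}} \|n\| > 0$ is also sound; the paper instead bounds the ratio $\omega(k)/\omega(l)$ directly without passing through multiplicative comparability of $\|k\|$ and $\|l\|$, but both routes land in the same place.
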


\begin{proof}
For the Besov case, recall that two elements $D_{n}(G)$ and $D_{m}(G)$ for $n,m \in \mathbb{N}_0$ only intersect whenever $m \in \{n-1,n,n+1\}.$ Hence the weight is $\mathcal{B}(G)$-moderate since $2^{(n+1)s}/2^{ns} =  2^{ns}/2^{(n-1)s} = 2^s$. \par
Let us consider the case $\alpha = 0$; we omit the more cumbersome case $0 < \alpha < 1$ as it relies on the same idea along with computations that can be found in the proof of \cite[Lemma 9.2]{Felix_main}. Assume that $B^{\|\cdot\|}(k,R) \cap B^{\|\cdot\|}(l,R) \neq \emptyset$ for $k,l \in N \setminus \{0\}$ where $R > 0$ is large enough so that $\mathcal{U}(G;N)$ is a covering. Then the triangle inequality implies that $\|l^{-1}*_G k\| \leq 2CR$ where $C \geq 1$ is the quasi-norm constant in \eqref{quasi_norm_constant}. Hence we obtain \[\frac{\omega(k)}{\omega(l)} = \left(\frac{1 + \|l *_G l^{-1} *_G k\|^2}{1 + \|l\|^2}\right)^{\frac{s}{2}} \leq \left(\frac{1 + (\|l\| + 2R)^2}{1 + \|l\|^2}\right)^{\frac{s}{2}} = \left(1 + 4R^2\frac{1 + R^{-1}\|l\|}{1 + \|l\|^2}\right)^{\frac{s}{2}}\leq \mathcal{C}_{\omega},\] where the constant $\mathcal{C}_{\omega}$ does not depend on the choice of lattice points $l,k \in N \setminus \{0\}$.
\end{proof}

\subsection{Main Result}

We now have all the tools needed to answer the question regarding uniqueness of the spaces $M_{p,q}^{s,\alpha}(G)$. For the case $G = \mathbb{H}_{n}$ and $\alpha = 0$, this question has been settled in \cite[Theorem 7.6]{David}. The authors showed that $M_{p_1,q_1}^{s_1}(\mathbb{H}_n) \neq M_{p_2,q_2}^{s_2}(\mathbb{R}^{2n+1})$ unless $(p_1,q_1,s_1) = (p_2,q_2,s_2) = (2,2,0)$, in which case \[M_{2,2}^{0}(\mathbb{H}_n) = M_{2,2}^{0}\left(\mathbb{R}^{2n+1}\right) = L^{2}\left(\mathbb{R}^{2n+1}\right).\] We say that the parameters $1 \leq p,q \leq \infty$, $s \in \mathbb{R}$, and $0 \leq \alpha \leq 1$ are \textit{non-trivial} if $(p,q,s) \neq (2,2,0)$. We are now ready to state the uniqueness result.

\begin{theorem}
\label{novelty_theorem}
Consider an admissible stratified Lie group $(\mathbb{R}^n,*_G)$ and two sets of non-trivial parameters $1 \leq p_1,p_2,q_1,q_2 \leq \infty$, $s_1,s_2 \in \mathbb{R}$, and $0 \leq \alpha_1,\alpha_2 \leq 1$. We have equality \[M_{p_1,q_1}^{s_1,\alpha_1}(G) = M_{p_2,q_2}^{s_2,\alpha_2}(\mathbb{R}^n)\] with equivalent norms if and only if both \[(p_1,q_1,s_1,\alpha_1) = (p_2,q_2,s_2,\alpha_2) \quad \text{and} \quad (\mathbb{R}^n,*_G) \simeq (\mathbb{R}^n,+).\]
\end{theorem}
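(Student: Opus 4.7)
The reverse direction is immediate, since under those two conditions both sides reduce to the same classical $\alpha$-modulation space. For the forward direction, the plan proceeds in three stages.

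The first stage is to translate the equality of the two spaces into equivalence of the underlying coverings and weights. Both $M_{p_1,q_1}^{s_1,\alpha_1}(G)$ and $M_{p_2,q_2}^{s_2,\alpha_2}(\mathbb{R}^n)$ are decomposition spaces on $\mathbb{R}^n$ in the sense of \cite{Felix_main}, built from the concatenations $\mathcal{Q}^{\alpha_1}(G)$ and $\mathcal{Q}^{\alpha_2}(\mathbb{R}^n)$ together with the moderate weights provided by Lemma \ref{lemma_moderate_weights}. Applying a general rigidity theorem for decomposition spaces (as in \cite[Chapter 6]{Felix_main}) to the assumed norm equivalence should yield $p_1=p_2$, $q_1=q_2$, equivalence of the two coverings as concatenations on $\mathbb{R}^n$, and equivalence of the associated weights along the bijection induced by almost subordination. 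The non-triviality hypothesis $(p,q,s)\neq(2,2,0)$ is needed precisely to rule out the degenerate situation in which all these spaces collapse to $L^2$ by Plancherel.

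The second stage is to force $(\mathbb{R}^n,*_G)\simeq(\mathbb{R}^n,+)$ from the equivalence of coverings. In the boundary cases $\alpha_1=\alpha_2=0$ and $\alpha_1=\alpha_2=1$ this is given directly by Proposition \ref{Besov_are_all_the_same}. For the remaining cases, I would compare the size estimates $|Q_i^{\alpha_1}(G)|\asymp(1+\|\xi_i\|^2)^{\alpha_1 Q/2}$ and $|P_j^{\alpha_2}(\mathbb{R}^n)|\asymp(1+\|\xi_j\|_E^2)^{\alpha_2 n/2}$ at pairs of points made comparable by the almost-subordination matching. Using the explicit quasi-norm $\|\cdot\|_2$ from \eqref{spesific_homogeneous_quasi_norm} and testing this size comparison against points lying purely in the first layer of \eqref{Lie_algebra_decomposition}, where $\|\cdot\|_2$ agrees with $\|\cdot\|_E$, versus points in a higher layer $V_j$ of degree $v_j>1$, where $\|\cdot\|_E$ is the $v_j$-th power of $\|\cdot\|_2$ restricted to that layer, should produce two irreconcilable linear relations between $\alpha_1 Q$ and $\alpha_2 n$ unless the stratification has only a single layer. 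This forces $(\mathbb{R}^n,*_G)\simeq(\mathbb{R}^n,+)$, and Theorem \ref{weakly_subordinate_result} applied to the Euclidean group then gives $\alpha_1=\alpha_2$.

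Once the group is Euclidean and the $\alpha$-parameters match, the equivalence of weights from the first stage reduces to $(1+\|k\|_E^{2/(1-\alpha)})^{s_1/2}\asymp(1+\|k\|_E^{2/(1-\alpha)})^{s_2/2}$ uniformly over the lattice, which forces $s_1=s_2$ and completes the four-parameter matching. The main obstacle will be the layered size comparison in the second stage: Proposition \ref{Besov_are_all_the_same} supplies a blueprint, but extending it to intermediate $\alpha$ requires care because the index sets of $\mathcal{Q}^{\alpha_1}(G;N)$ and $\mathcal{Q}^{\alpha_2}(\mathbb{R}^n;\mathbb{Z}^n)$ are distinct lattices whose correspondence under almost subordination is only geometric and not algebraic. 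Invoking Proposition \ref{proposition_about_equivalence} to switch quasi-norms freely, and working with the geometric balls $B^{\|\cdot\|}(\delta_\beta(k),r\|k\|^\beta)$ rather than the indexing itself, should manage this complication.
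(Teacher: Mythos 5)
Your proposal is correct and shares the paper's overall architecture: both arguments first invoke the decomposition-space rigidity theorem \cite[Theorem 6.9]{Felix_main} (made applicable by Lemma \ref{lemma_moderate_weights}) to convert the assumed norm equivalence into $(p_1,q_1)=(p_2,q_2)$ together with equivalence of the coverings and of the associated weights, and both extract $s_1=s_2$ at the very end by evaluating the weight comparison along a coordinate axis of the lattice. Where you genuinely diverge is the middle step. The paper splits into three cases: it shows $\alpha_1=1\Leftrightarrow\alpha_2=1$ by an exponential-versus-polynomial growth count and $\alpha_1=0\Leftrightarrow\alpha_2=0$ by constancy of element sizes, settles both boundary cases with Proposition \ref{Besov_are_all_the_same}, and in the intermediate range first proves $\alpha_1=\alpha_2$ by \emph{restricting} both coverings to the first layer $\mathbb{R}^k\times\{0\}^{n-k}$ (where they become Euclidean $\alpha_i$-coverings, so Theorem \ref{weakly_subordinate_result} applies) and only afterwards compares measures along that layer to force $Q=n$. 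Your two-layer size comparison replaces essentially all of this except one case: testing $|Q_i^{\alpha_1}(G)|\asymp(1+\|\xi\|_2^2)^{\alpha_1 Q/2}$ against $(1+\|\xi\|_E^2)^{\alpha_2 n/2}$ at first-layer points, where $\|\cdot\|_2=\|\cdot\|_E$, gives $\alpha_1 Q=\alpha_2 n$, while testing at top-layer points of degree $s$, where $\|\cdot\|_2=\|\cdot\|_E^{1/s}$, gives $\alpha_1 Q/s=\alpha_2 n$; together these force $\alpha_1 Q(1-1/s)=0$, which simultaneously kills the mixed boundary cases, yields abelianness whenever $\alpha_1>0$, and (since then $Q=n$) gives $\alpha_1=\alpha_2$ without any restriction argument. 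The one configuration your exponent comparison cannot see is $\alpha_1=\alpha_2=0$, where every relation degenerates to $0=0$; there you correctly fall back on Proposition \ref{Besov_are_all_the_same}, exactly as the paper does, and this metric-geometry input is genuinely unavoidable. On balance your route gives a more uniform case analysis, whereas the paper's restriction argument additionally establishes that $\mathcal{Q}^{\alpha}(G|\mathbb{R}^k)$ is a Euclidean $\alpha$-covering, a fact it reuses in the corollary that follows the theorem.
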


\begin{proof}
Assume first that all the data coincide, that is, $(p_1,q_1,s_1,\alpha_1) = (p_2,q_2,s_2,\alpha_2)$ and $(\mathbb{R}^n,*_G) \simeq (\mathbb{R}^n,+)$. We can apply Proposition \ref{proposition_about_equivalence} to obtain that $\mathcal{Q}^{\alpha_1}(G)$ is equivalent to $\mathcal{Q}^{\alpha_2}(\mathbb{R}^n)$. Then the first implication follows from \cite[Theorem 3.7]{Hans_Grobner} stating that two decomposition spaces are equal with equivalent norms whenever we have equivalent underlying coverings and equal parameters. The difficult part is the converse, and the rest of the proof is devoted to this direction. \par
Assume that we have equality $M_{p_1,q_1}^{s_1,\alpha_1}(G) = M_{p_2,q_2}^{s_2,\alpha_2}(\mathbb{R}^n)$ with equivalent norms. We start by applying the very general result \cite[Theorem 6.9]{Felix_main} implying that $(p_1,q_1) = (p_2,q_2)$ and that the coverings $\mathcal{Q}^{\alpha_1}(G)$ and $\mathcal{Q}^{\alpha_2}(\mathbb{R}^n)$ are weakly equivalent. We needed Lemma \ref{lemma_moderate_weights} to invoke this result. Since both coverings consist of open and path-connected sets, it follows that $\mathcal{Q}^{\alpha_1}(G)$ and $\mathcal{Q}^{\alpha_2}(\mathbb{R}^n)$ are equivalent. Our strategy to show equality of the smoothness parameters $s_1,s_2$ is to use the result \cite[Theorem 6.9 (4b)]{Felix_main} showing that the weights $\omega_{\alpha_1,G}$ and $\omega_{\alpha_2,\mathbb{R}^n}$ corresponding to the coverings $\mathcal{Q}^{\alpha_1}(G) = (Q_{i}^{\alpha_1}(G))_{i \in I}$ and $\mathcal{Q}^{\alpha_2}(\mathbb{R}^n) = (Q_{j}^{\alpha_2}(\mathbb{R}^n))_{j \in J}$ are equivalent whenever $M_{p_1,q_1}^{s_1,\alpha_1}(G) = M_{p_2,q_2}^{s_2,\alpha_2}(\mathbb{R}^n)$ in the sense that there exists a constant $C > 0$ such that 
\begin{equation}
\label{equivalent_weights}
    \frac{1}{C}\omega_{\alpha_1,G}(i) \leq \omega_{\alpha_2,\mathbb{R}^n}(j) \leq C \omega_{\alpha_1,G}(i),
\end{equation} for all indices $i$ and $j$ such that $Q_{i}^{\alpha_1}(G) \cap Q_{j}^{\alpha_2}(\mathbb{R}^n) \neq \emptyset$. Let us begin with the Besov case: \par 
Assume that either $\alpha_1 = 1$ or $\alpha_2 = 1$. The first part of the proof of Theorem \ref{weakly_subordinate_result} regarding exponential versus polynomial growth goes through to show that it is nessesary that both $\alpha_1 = \alpha_2 = 1$. We can now apply Proposition \ref{Besov_are_all_the_same} to obtain that $(\mathbb{R}^n,*_G) \simeq (\mathbb{R}^n,+)$. It remains to show that the smoothness parameters $s_1$ and $s_2$ are equal. However, this is obvious using \eqref{equivalent_weights} and Lemma \ref{lemma_moderate_weights}. Hence all the parameters coincide and $(\mathbb{R}^n,*_G) \simeq (\mathbb{R}^n,+)$. This finishes the Besov case and we can now assume that $0 \leq \alpha_1, \alpha_2 < 1$. \par 
Fix a lattice $N$ in $(\mathbb{R}^n,*_G)$ and assume that either $\alpha_1 = 0$ or $\alpha_2 = 0$. Since the elements in $\mathcal{U}(G;N)$ and $\mathcal{U}(\mathbb{R}^n;\mathbb{Z}^n)$ have constant size, it it is nessesary then that both $\alpha_1 = \alpha_2 = 0$ since the coverings are equivalent. It follows from Proposition \ref{Besov_are_all_the_same} that this forces $(\mathbb{R}^n,*_G) \simeq (\mathbb{R}^n,+)$. Since the choice of lattice is irrelevant, we can choose the lattice $\mathbb{Z}^n$ for both coverings. We then get from \eqref{equivalent_weights} that there exists a $C > 0$ such that \[\frac{1}{C}(1 + \|k\|_{E}^{2})^{\frac{s_1}{2}} \leq (1 + \|k\|_{E}^{2})^{\frac{s_2}{2}} \leq C(1 + \|k\|_{E}^{2})^{\frac{s_1}{2}},\] for all $k \in \mathbb{Z}^{n} \setminus \{0\}$. The equality $s_1 = s_2$ follows from considering $k = (m, 0, \dots, 0) \in \mathbb{Z}^n \setminus \{0\}$ for $m \in \mathbb{N}$. \par
For the intermediate case $0 < \alpha_1, \alpha_2 < 1$ we will first show that $\alpha_1 = \alpha_2$ with a restriction argument. Let $k$ denote the rank of $(\mathbb{R}^n,*_G)$ and let $V_1$ be the first layer in the stratification \eqref{Lie_algebra_decomposition}. Consider the restricted covering on $\mathbb{R}^{k}$ given by \[\mathcal{Q}^{\alpha_1}(G|\mathbb{R}^{k}) := \left(Q_{l}^{\alpha_1}(G) \cap \left(\mathbb{R}^{k} \times \{0\}^{n-k}\right)\right)_{l \in N \setminus \{0\}}.\] To be a bit pedantic, we have defined coverings as consisting of non-empty subsets so we would actually need to remove all the empty sets and renumber the index set $N \setminus \{0\}$ accordingly. However, this will play no role so we omit this insignificant detail. It is straightforward to see that $\mathcal{Q}^{\alpha_1}(G|\mathbb{R}^{k})$ is an admissible covering. Each element in $\mathcal{Q}^{\alpha_1}(G|\mathbb{R}^{k})$ is open and connected due to the subspace topology on $\mathbb{R}^{k}$. It is clear when using the homogeneous quasi-norm $\|\cdot\|_2$ given in \eqref{spesific_homogeneous_quasi_norm} that $\mathcal{Q}^{\alpha_1}(G|\mathbb{R}^{k})$ is an $\alpha_1$-covering on $\mathbb{R}^{k}$. It now follows from Proposition \ref{proposition_about_equivalence} that $\mathcal{Q}^{\alpha_1}(G|\mathbb{R}^{k})$ is equivalent to $\mathcal{Q}^{\alpha_1}(\mathbb{R}^k)$. Since $\mathcal{Q}^{\alpha_1}(G)$ is equivalent to $\mathcal{Q}^{\alpha_2}(\mathbb{R}^n)$ we obtain by restricting that $\mathcal{Q}^{\alpha_1}(\mathbb{R}^k)$ is equivalent to $\mathcal{Q}^{\alpha_2}(\mathbb{R}^k)$. We can now apply Theorem \ref{weakly_subordinate_result} to obtain that $\alpha := \alpha_1 = \alpha_2$. \par 
The next step is to show that the homogeneous dimension $Q$ of $(\mathbb{R}^n,*_G)$ is actually equal to $n$. Fix a lattice on the form $N = \gamma \mathbb{Z}^{k} \times N'$ for $\gamma > 0 $ and use the notation $\vec{l} = (\gamma l, 0, \dots, 0)$ for $l \in \mathbb{N}$. Then by using the homogeneous quasi-norm $\|\cdot\|_2$ we have $Q_{\vec{l}}^{\alpha}(G) \cap Q_{\vec{l}}^{\alpha}(\mathbb{R}^n) \neq \emptyset$ and the estimates
\begin{equation}
\label{estimates_in_main_theorem}
    |Q_{\vec{l}}^{\alpha}(G)| \asymp l^{\frac{Q \alpha}{1 - \alpha}}, \qquad |Q_{\vec{l}}^{\alpha}(\mathbb{R}^n)| \asymp l^{\frac{n\alpha}{1 - \alpha}}, \qquad \frac{|Q_{\vec{l}}^{\alpha}(G)|}{|Q_{\vec{l}}^{\alpha}(\mathbb{R}^n)|} \asymp l^{(Q - n)\frac{\alpha}{1 - \alpha}}.
\end{equation}
Notice that if $Q > n$ then for large $l$ the ratio in \eqref{estimates_in_main_theorem} tends to zero. Recall that the neighbours of $Q_{\vec{l}}^{\alpha}(\mathbb{R}^k)$ are of roughly the same size as $Q_{\vec{l}}^{\alpha}(\mathbb{R}^k)$ because of \eqref{size_of_coverings}. This is a contradiction to the equivalence of the coverings $\mathcal{Q}^{\alpha}(G)$ and $\mathcal{Q}^{\alpha}(\mathbb{R}^n)$. Thus we conclude that $Q = n$ and this implies as previously mentioned that $(\mathbb{R}^n,*_G) \simeq (\mathbb{R}^n,+)$. \par We can now use the standard lattice $\mathbb{Z}^n$ and the standard Euclidean norm $\|\cdot\|_E$ for both coverings. From \eqref{equivalent_weights} there exists a $C > 0$ such that \[\frac{1}{C}\left(1 + \|k\|_{E}^{\frac{2}{1-\alpha}}\right)^{\frac{s_1}{2}} \leq \left(1 + \|k\|_{E}^{\frac{2}{1 - \alpha}}\right)^{\frac{s_2}{2}} \leq C\left(1 + \|k\|_{E}^{\frac{2}{1 - \alpha}}\right)^{\frac{s_1}{2}},\] for all $k \in \mathbb{Z}^{n} \setminus \{0\}$. By again considering $k = (m, 0, \dots, 0) \in \mathbb{Z}^n \setminus \{0\}$ for $m \in \mathbb{N}$ we see that $s_1 = s_2$. Thus all the parameters coincide and $(\mathbb{R}^n,*_G) \simeq (\mathbb{R}^n,+)$.
\end{proof}

\begin{remark}
\begin{itemize}
    \item When the parameters are trivial we have the equality \[M_{2,2}^{0,\alpha_1}(G) = M_{2,2}^{0,\alpha_2}(\mathbb{R}^n) = L^{2}(\mathbb{R}^n),\] where $n := \textrm{dim}(G)$ and $0 \leq \alpha_1, \alpha_2 \leq 1$ by an application of the Pythagorean theorem. 
    \item Usually, we can treat the uniform covering $\mathcal{U}(G;N)$ as a special case of the covering $\mathcal{Q}^{\alpha}(G;N)$ corresponding to $\alpha = 0$. However, a careful inspection of the proof of Theorem \ref{novelty_theorem} shows that the approach in \eqref{estimates_in_main_theorem} breaks down for $\alpha = 0$. This is why we treated the uniform case separately with techniques from metric space geometry by invoking Proposition \ref{Besov_are_all_the_same}. The reader should be aware that we needed to use both metric space geometry arguments and the highly non-trivial results \cite[Theorem 6.9]{Felix_main} and \cite[Theorem 3.7]{Hans_Grobner} to prove Theorem \ref{novelty_theorem}.
\end{itemize}
\end{remark}

\begin{corollary}
Consider a generalized $\alpha_1$-covering $\mathcal{P}^{\alpha_1}(G)$ and a generalized $\alpha_2$-covering $\mathcal{P}^{\alpha_1}(H)$ corresponding to admissible Lie groups $(\mathbb{R}^n,*_G)$ and $(\mathbb{R}^n,*_H)$, respectively. Then $\mathcal{P}^{\alpha_1}(G)$ can only be equivalent to $\mathcal{P}^{\alpha_1}(H)$ whenever $\alpha_1 = \alpha_2$.
\end{corollary}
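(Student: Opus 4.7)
The plan is to reduce the claim to Theorem \ref{weakly_subordinate_result} applied in Euclidean space through a restriction argument. By Proposition \ref{proposition_about_equivalence} we may replace $\mathcal{P}^{\alpha_1}(G)$ and $\mathcal{P}^{\alpha_2}(H)$ by the explicit coverings $\mathcal{Q}^{\alpha_1}(G;N_G)$ and $\mathcal{Q}^{\alpha_2}(H;N_H)$ of \eqref{explicit_coverings_cases}, both constructed with the canonical homogeneous quasi-norm $\|\cdot\|_2$ of \eqref{spesific_homogeneous_quasi_norm}. The feature we exploit throughout is that $\|\cdot\|_2$ agrees with the Euclidean norm on the first stratum $V_1$ for any stratified Lie group.

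First I would dispatch the Besov case. If exactly one of $\alpha_1,\alpha_2$ equals one, then one covering has elements whose diameters with respect to the Euclidean norm grow exponentially in the indexing parameter, while the elements of the other grow only polynomially in the norm of their indexing lattice point. The exponential-versus-polynomial counting that appears in the first paragraph of the proof of Theorem \ref{weakly_subordinate_result} applies verbatim here and contradicts weak subordination, so $\alpha_1 = 1 \Longleftrightarrow \alpha_2 = 1$ and in that case we are done.

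Assume now $0 \leq \alpha_1,\alpha_2 < 1$. Set $k := \min\{\textrm{rank}(G), \textrm{rank}(H)\}$ and consider the subspace $W := \mathbb{R}^k \times \{0\}^{n-k}$, which lies inside the first stratum of both groups once bases adapted to each stratification are ordered so that $V_1$ comes first. The restriction of $\|\cdot\|_2$ to $W$ is literally the Euclidean norm on $\mathbb{R}^k$, so the slice $B^{\|\cdot\|_2}(x,r) \cap W$ of any ball centred in $W$ is a Euclidean ball of the same radius. Using rationality, we pick the lattices $N_G,N_H$ so that $N_G \cap W$ and $N_H \cap W$ are full-rank lattices in $W$. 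Restricting $\mathcal{Q}^{\alpha_j}$ to $W$ and discarding empty intersections then produces admissible coverings on $\mathbb{R}^k$ whose elements have size $\asymp (1 + \|\xi\|_E^2)^{\alpha_j k / 2}$, since the exponent $Q$ for the ambient group is replaced by $k$ upon taking a $k$-dimensional slice of a $\|\cdot\|_2$-ball. By Proposition \ref{proposition_about_equivalence} these slice coverings are equivalent to genuine Euclidean $\alpha_j$-coverings on $\mathbb{R}^k$. Almost subordination is preserved by intersecting every element of a covering with a fixed subspace, so the assumed equivalence of the ambient coverings descends to equivalence of the restricted ones on $\mathbb{R}^k$; Theorem \ref{weakly_subordinate_result} applied to $(\mathbb{R}^k,+)$ then forces $\alpha_1 = \alpha_2$.

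The main obstacle is the restriction step itself: verifying that the slice of $\mathcal{Q}^{\alpha_j}(G;N_G)$ by $W$ really does satisfy both the size estimate \eqref{size_of_coverings} and the ratio bound \eqref{ratio_condition} of a Euclidean $\alpha_j$-covering on $\mathbb{R}^k$. This mirrors the restriction procedure carried out within the proof of Theorem \ref{novelty_theorem}, and rationality combined with the compatibility of the lattices with $W$ ensures that the restricted family of balls has enough centres to cover $\mathbb{R}^k$ with the correct density. Beyond this combinatorial bookkeeping, the remainder of the argument is a direct application of machinery already developed in the paper.
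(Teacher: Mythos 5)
Your overall architecture coincides with the paper's: reduce to the explicit coverings via Proposition \ref{proposition_about_equivalence}, dispose of the dyadic case by the exponential-versus-polynomial growth count from the first part of the proof of Theorem \ref{weakly_subordinate_result}, then restrict both coverings to a copy of $\mathbb{R}^k$ sitting inside the first stratum and invoke Theorem \ref{weakly_subordinate_result} there. (The paper additionally isolates $\alpha_i = 0$ by comparing the constant size of the elements of $\mathcal{U}(G;N)$ with the growing size of the elements of $\mathcal{Q}^{\alpha}$ for $\alpha > 0$, whereas you fold $\alpha = 0$ into the restriction step; that difference is cosmetic.)

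The problem is that the one concrete lemma you supply to carry the restriction step is false, and it occurs exactly where the paper's own proof of Theorem \ref{novelty_theorem} only says ``it is clear''. For $x, h \in V_1$ in a non-abelian step-two group the BCH formula gives $x^{-1} *_G h = \bigl(h - x, \, -\tfrac{1}{2}[x,h]\bigr)$, so with the quasi-norm \eqref{spesific_homogeneous_quasi_norm}
\[
\bigl\|x^{-1} *_G h\bigr\|_2^2 = \|h - x\|_E^2 + \tfrac{1}{2}\sum_j \bigl|[x, h-x]_j\bigr|,
\]
and the slice $B^{\|\cdot\|_2}(x,r) \cap W$ is \emph{not} the Euclidean ball of radius $r$: the second term pinches it to width $\asymp r^2/\|x\|_E$ in the directions of $V_1$ transverse to the kernel of $\mathrm{ad}_x$. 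In the three-dimensional Heisenberg group with $W = V_1 = \mathbb{R}^2$, the slice of $B^{\|\cdot\|_2}(\delta_{\beta}(l), r\|l\|^{\beta})$ for $l \in N \cap W$ has area $\asymp \|l\|^{2\beta - 1}$ rather than the $\asymp \|l\|^{2\beta}$ that an $\alpha$-covering of $\mathbb{R}^2$ must have at a point of Euclidean norm $\|l\|^{1+\beta}$; consequently your asserted size estimate $\asymp (1+\|\xi\|_E^2)^{\alpha_j k/2}$ for the restricted sets fails, and since the slices must still cover $W$, a single element of $\mathcal{Q}^{\alpha}(\mathbb{R}^k)$ meets $\gtrsim \|l\|$ of them, so the restricted family is not even weakly equivalent to a Euclidean $\alpha$-covering. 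You correctly identify the restriction step as the main obstacle, but the justification you offer does not close it: one would either have to prove the equivalence of the restricted covering with $\mathcal{Q}^{\alpha}(\mathbb{R}^k)$ by a genuinely different argument, or bypass the restriction altogether, for instance by comparing volumes of intersecting elements along the ray $\vec{l} = (\gamma l, 0, \dots, 0)$ in the spirit of \eqref{estimates_in_main_theorem}.
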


\begin{proof}
It suffices to consider the explicit coverings $\mathcal{Q}^{\alpha_1}(G)$ and $\mathcal{Q}^{\alpha_2}(H)$ given in \eqref{explicit_coverings_cases} due to Proposition \ref{proposition_about_equivalence}. We have remarked in the proof of Theorem \ref{novelty_theorem} that $\alpha_1 = 0$ implies that $\alpha_2 = 0$ and that $\alpha_1 = 1$ implies that $\alpha_2 = 1$. Hence $0 < \alpha_1,\alpha_2 < 1$ and we can use the restriction trick in the proof of Theorem \ref{novelty_theorem} to reduce both coverings to $\mathbb{R}^k$, where $k := \min\{\textrm{rank}(\mathbb{R}^n,*_G),\textrm{rank}(\mathbb{R}^n,*_H)\}$. Thus $\alpha_1 = \alpha_2$ follows from Theorem \ref{weakly_subordinate_result}.
\end{proof}

\section{Geometric Embeddings Between Generalized Modulation Spaces}
\label{sec: Metric_Geometry_G_alpha_Modulation_Spaces}

Consider two admissible Lie groups $(\mathbb{R}^n,*_G)$ and $(\mathbb{R}^m,*_H)$ together with the spaces $M_{p_1,q_1}^{s_1,\alpha_1}(G)$ and $M_{p_2,q_2}^{s_2,\alpha_2}(H)$. We would like to understand when $M_{p_1,q_1}^{s_1,\alpha_1}(G)$ embeds into the space $M_{p_2,q_2}^{s_2,\alpha_2}(H)$ in a way that preserves the global features of the underlying coverings $\mathcal{Q}^{\alpha_1}(G)$ and $\mathcal{Q}^{\alpha_2}(H)$. When $n = m$, we can simply consider whether the inclusion $M_{p_1,q_1}^{s_1,\alpha_1}(G) \subset M_{p_2,q_2}^{s_2,\alpha_2}(H)$ is bounded. However, when $n \neq m$ we need to be able to compare the coverings $\mathcal{Q}^{\alpha_1}(G)$ and $\mathcal{Q}^{\alpha_2}(H)$ even though they are not on the same space. Hence the commonly used notions of subordinate and weakly subordinate coverings introduced in Subsection \ref{sec: Admissible_coverings} are no longer applicable. However, we see from Proposition \ref{Hans_result} that we should ask that the embedding \[F: M_{p_1,q_1}^{s_1,\alpha_1}(G) \to M_{p_2,q_2}^{s_2,\alpha_2}(H)\] in some way induces a quasi-isometric embedding \[F_{*}:\left(\mathbb{R}^n,d_{\mathcal{Q}^{\alpha_1}(G)}\right) \longrightarrow \left(\mathbb{R}^m,d_{\mathcal{Q}^{\alpha_2}(H)}\right).\] The correct formalization for this was investigated for a very general class of spaces known as \textit{decomposition spaces} in \cite{Berge}. We will briefly review the technical details adapted to our setting. 

\begin{definition}
\label{definition_essential_supprot}
Consider the generalized $\alpha$-modulation space $M_{p,q}^{s,\alpha}(G)$ for some $1 \leq p,q \leq \infty$, $s \in \mathbb{R}$, and $0 \leq \alpha \leq 1$ corresponding to an admissible Lie group $(\mathbb{R}^n,*_G)$. Fix a lattice $N \subset (\mathbb{R}^n,*_G)$. The \textit{essential support} of an element $f \in M_{p,q}^{s,\alpha}(G)$ with respect to the generalized $\alpha$-covering $\mathcal{Q}^{\alpha}(G;N) = (Q_{i}^{\alpha})_{i \in I}$ is defined to be \[\mathcal{C}[f] := \bigcup_{i \in I} \left\{Q_{i}^{\alpha} \, \, \Big | \, \, \|\mathcal{F}^{-1}\left(\psi_{i} \cdot \mathcal{F}(f)\right)\|_{L^{p}} \neq 0 \right\},\] where $(\psi_{i})_{i \in I}$ is any choice of $\mathcal{Q}^{\alpha}(G;N)$-BAPU.
\end{definition}
To clarify, the index set $I$ in Definition \ref{definition_essential_supprot} is equal to $N \setminus \{0\}$ when $0 \leq \alpha < 1$ and equal to $\mathbb{N}_0$ when $\alpha = 1$. Although the essential support of $f \in M_{p,q}^{s,\alpha}(G)$ does depend on the choice of lattice $N$ and the $\mathcal{Q}^{\alpha}(G)$-BAPU, it will be clear that specific choices are irrelevant. The reason we need to utilize this general notion of support is that not every element in $M_{p,q}^{s,\alpha}(G)$ can be realized as a function on $\mathbb{R}^n$; this is already the case for the Euclidean modulation spaces $M_{p,q}(\mathbb{R}^n)$ for most values of $1 \leq p,q \leq \infty$. \par
For every $i \in I$ and $k \in \mathbb{N}_{0}$ we can find an element $g_{i,k} \in M_{p,q}^{s,\alpha}(G)$ such that the essential support of $g_{i,k}$ is contained in $\left(\mathcal{Q}_{i}^{\alpha}\right)^{k*}$. We can even choose the elements $g_{i,k}$ to be smooth functions with compact support since $\mathcal{S}(\mathbb{R}^n)$ is contained in $M_{p,q}^{s,\alpha}(G)$ by Proposition \ref{Schwartz_functions} and there exist smooth $\mathcal{Q}^{\alpha}(G)$-BAPU's with compact support for all $0 \leq \alpha \leq 1$. Hence the following definition is well-defined.

\begin{definition}
\label{geometric_embedding_definition}
Consider the spaces $M_{p_1,q_1}^{s_1,\alpha_1}(G)$ and $M_{p_2,q_2}^{s_2,\alpha_2}(H)$ associated to the admissible Lie groups $(\mathbb{R}^n,*_G)$ and $(\mathbb{R}^n,*_H)$, respectively. We say that a map \[F:M_{p_1,q_1}^{s_1,\alpha_1}(G) \to M_{p_2,q_2}^{s_2,\alpha_2}(H)\] is a \textit{geometric embedding} if $F$ is an injective bounded map between normed spaces with the following additional requirement: There should exist constants $L,C > 0$ such that for any $k \in \mathbb{N}_{0}$ and any $f,g \in M_{p_1,q_1}^{s_1,\alpha_1}(G)$ with $\mathcal{C}[f] \subset (Q_{i}^{\alpha_1})^{k*}$ and $\mathcal{C}[g] \subset (Q_{j}^{\alpha_1})^{k*}$, we have
\begin{equation}
\label{geometric_condition}
 \frac{1}{L}d_{\mathcal{Q}^{\alpha_1}(G)}(x,y) - C \leq d_{\mathcal{Q}^{\alpha_2}(H)}\left(z,w\right) \leq Ld_{\mathcal{Q}^{\alpha_1}(G)}(x,y) + C,
\end{equation}
where $x \in (Q_{i}^{\alpha_1})^{k*}$, $y \in (Q_{j}^{\alpha_1})^{k*}$, $z \in \mathcal{C}[F(f)]$ and $w \in \mathcal{C}[F(g)]$ are arbitrary. The spaces $M_{p_1,q_1}^{s_1,\alpha_1}(G)$ and $M_{p_2,q_2}^{s_2,\alpha_2}(H)$ are said to be \textit{geometrically isomorphic} if there exists an invertible geometric embedding from $M_{p_1,q_1}^{s_1,\alpha_1}(G)$ to $M_{p_2,q_2}^{s_2,\alpha_2}(H)$ whose inverse is also a geometric embedding. 
\end{definition}

\begin{remark}
Notice that we have left out the choice of the lattice in Definition \ref{geometric_embedding_definition} although it implicitly appears in the essential supports and in the distances. The fact that any two lattices in a stratified Lie group are quasi-isometric as metric spaces \cite[Corollary 5.5.9]{Geometric_Group_Theory} implies that the choice of lattices are irrelevant when discussing the existence or non-existence of geometric embeddings. It is also straightforward to see that specific choices of BAPU's does not change the existence question. Hence we can treat existence of geometric embeddings as a canonical property of generalized $\alpha$-modulation spaces.
\end{remark}

It is straightforward to see that a composition of geometric embeddings is again a geometric embedding. The most important property of a geometric embedding $F:M_{p_1,q_1}^{s_1,\alpha_1}(G) \to M_{p_2,q_2}^{s_2,\alpha_2}(H)$ is that it induces a quasi-isometric embedding between the metric spaces $\left(\mathbb{R}^n,d_{\mathcal{Q}^{\alpha_1}(G)}\right)$ and $\left(\mathbb{R}^m, d_{\mathcal{Q}^{\alpha_2}(H)}\right)$ \cite[Proposition 4.6]{Berge}. In our case, this can be described as follows: For $x \in \mathbb{R}^n$ we pick $i \in I$ such that $x \in Q_{i}^{\alpha_1}$ and choose a non-zero function $g_i \in \mathcal{S}(\mathbb{R}^n)$ with $\mathcal{C}[g_i] \subset Q_{i}^{\alpha_1}$. There exists an element $y \in \mathcal{C}[F(g)]$ since $F$ is assumed to be injective. If we define \[F_{*}: \left(\mathbb{R}^n,d_{\mathcal{Q}^{\alpha_1}(G)}\right) \longrightarrow{} \left(\mathbb{R}^m, d_{\mathcal{Q}^{\alpha_2}(H)}\right), \qquad F_{*}(x) = y,\] then $F_{*}$ is easily seen to be a quasi-isometric embedding.\par 
The way to think about geometric embeddings is that they are Banach spaces embeddings that do not \textquote{scramble} the frequency information to much. It might change the frequency information slightly in some bounded region, but we have global control over the displacements. We will focus on the geometric embeddings of the generalized modulation spaces $M_{p,q}^{s}(G)$ with underlying coverings $\mathcal{U}(G)$. The following result was proved in \cite[Theorem 5.2]{Berge} and settles the question for Euclidean modulation spaces.

\begin{proposition}
\label{modulation_space_result}
For $1 \leq p,q < \infty$ there is a tower of compatible geometric embeddings \[M_{p,q}(\mathbb{R}) \xrightarrow{\Gamma_{1}^{2}}M_{p,q}(\mathbb{R}^2) \xrightarrow{\Gamma_{2}^3} \dots \xrightarrow{\Gamma_{n-1}^{n}} M_{p,q}(\mathbb{R}^n) \xrightarrow{\Gamma_{n}^{n+1}} \dots, \] where there are no geometric embeddings in the other direction.
\end{proposition}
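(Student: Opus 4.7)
The plan is to prove two complementary statements: construct concrete geometric embeddings $\Gamma_{n-1}^{n}$ that can be iterated to form the tower, and separately rule out geometric embeddings in the reverse direction by a coarse invariant of Euclidean space. The natural candidate for $\Gamma_{n-1}^{n}$ is to tensor with a fixed cut-off in the extra coordinate, so that compatibility of the tower reduces to tensoring with one further factor at each step.

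For the construction, I would fix once and for all a non-zero $\phi\in\mathcal{S}(\mathbb{R})$ whose Fourier transform is compactly supported near the origin, and define $\Gamma_{n-1}^{n}(f)(x',x_n) := f(x')\,\phi(x_n)$. Injectivity is immediate, and the map is clearly linear. Boundedness follows from a tensor-product BAPU: if $\{\psi_m\}_{m\in\mathbb{Z}^{n-1}}$ is a $\mathcal{U}(\mathbb{R}^{n-1})$-BAPU and $\{\eta_k\}_{k\in\mathbb{Z}}$ is a $\mathcal{U}(\mathbb{R})$-BAPU, then $\{\psi_m\otimes\eta_k\}$ is a $\mathcal{U}(\mathbb{R}^n)$-BAPU (the uniform $L^1$-bound is obvious because the inverse Fourier transform factors), and the Fourier-multiplier decomposition of $f\otimes\phi$ splits, yielding
\[
\|\Gamma_{n-1}^{n}(f)\|_{M_{p,q}(\mathbb{R}^n)}^{q} = \sum_{m\in\mathbb{Z}^{n-1}}\sum_{k\in\mathbb{Z}}\|\mathcal{F}^{-1}(\psi_m\mathcal{F}f)\|_{L^p}^{q}\,\|\mathcal{F}^{-1}(\eta_k\mathcal{F}\phi)\|_{L^p}^{q} = \|f\|_{M_{p,q}(\mathbb{R}^{n-1})}^{q}\,\|\phi\|_{M_{p,q}(\mathbb{R})}^{q},
\]
the last factor being finite by Proposition \ref{Schwartz_functions}. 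For the geometric condition \eqref{geometric_condition} I would use that since $\widehat{\phi}$ is compactly supported, the essential support of $\Gamma_{n-1}^{n}(f)$ is contained in $\mathcal{C}[f]\times K$ where $K\subset\mathbb{R}$ is a fixed bounded set, hence in a uniformly bounded number of iterated neighbours of $\mathcal{C}[f]\times\{0\}$. Combined with the fact that the uniform-covering metric on $\mathbb{R}^n$ is quasi-isometric to the Euclidean metric, the inclusion $\mathbb{R}^{n-1}\hookrightarrow \mathbb{R}^{n-1}\times\{0\}\subset\mathbb{R}^n$ is an isometric embedding for Euclidean, and hence a quasi-isometric embedding of the covering metrics, which yields both bounds in \eqref{geometric_condition}. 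Compatibility of the tower $\Gamma_n^{n+1}\circ\Gamma_{n-1}^n$ is then automatic because a composition of two such tensorings is again a tensoring with a Schwartz function of the same form.

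For the non-existence direction, assume for contradiction that $F\colon M_{p,q}(\mathbb{R}^n)\to M_{p,q}(\mathbb{R}^{n-1})$ is a geometric embedding. By \cite[Proposition 4.6]{Berge} (recalled before the statement), $F$ induces a quasi-isometric embedding $F_*\colon (\mathbb{R}^n,d_{\mathcal{U}(\mathbb{R}^n)})\to(\mathbb{R}^{n-1},d_{\mathcal{U}(\mathbb{R}^{n-1})})$. Both associated metric spaces are quasi-isometric to the respective Euclidean spaces, so one obtains a quasi-isometric embedding $\Phi\colon\mathbb{R}^n\to\mathbb{R}^{n-1}$ for the standard Euclidean metrics. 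A standard volume/packing estimate now rules this out: with the quasi-isometry parameters $L,C$, any maximal $1$-separated subset of $B_{\mathbb{R}^n}(0,R)$ has cardinality $\asymp R^n$, its image under $\Phi$ is $\bigl(\tfrac{1}{L}-o(1)\bigr)$-separated inside $B_{\mathbb{R}^{n-1}}(\Phi(0),LR+C)$ for $R$ large, and such a separated set has cardinality at most $\asymp R^{n-1}$; letting $R\to\infty$ gives $R^n\lesssim R^{n-1}$, a contradiction. I expect the main obstacle to be the lower bound of \eqref{geometric_condition} for $\Gamma_{n-1}^{n}$: the essential support of $\Gamma_{n-1}^n(f)$ must not spread out in the $x_n$-direction, which is precisely why $\widehat{\phi}$ is required to be (essentially) compactly supported. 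The volume-growth obstruction is the cleanest packaging of the Euclidean asymptotic dimension and should present no further difficulty.
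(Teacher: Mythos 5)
Your proof is essentially correct, but it is worth noting that the paper does not actually prove Proposition \ref{modulation_space_result} itself: it is quoted from \cite[Theorem 5.2]{Berge}, and the paper explicitly remarks that the proof there goes through the short-time Fourier transform. Your argument is therefore a genuinely different (and self-contained) route. Interestingly, your construction --- tensoring with a fixed Schwartz function whose Fourier transform is compactly supported near the origin, verifying boundedness via a tensor-product $\mathcal{U}(\mathbb{R}^{n})$-BAPU so that the multiplier decomposition factors, and reading off the induced map on covering metrics as the coordinate inclusion --- is precisely the technique the paper itself deploys later to prove the more general Theorem \ref{new_geometric_embedding_result} (there the map is written on the Fourier side as $\mathcal{F}(f)\otimes\xi\otimes\cdots\otimes\xi$ with $\xi\in C_{c}^{\infty}$, which is the same map as yours). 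So your approach buys a uniform treatment of the Euclidean tower by the decomposition-space method rather than by STFT estimates, at the cost of having to invoke density of $\mathcal{S}(\mathbb{R}^{k})$ in $M_{p,q}(\mathbb{R}^{k})$ for $p,q<\infty$ (which you should state explicitly, as the paper does via \cite[Proposition 11.3.4]{TF_analysis}); the STFT route in \cite{Berge} instead gets the norm identity directly from the tensorial behaviour of $V_{g}f$. Your non-existence argument via packing/volume growth of quasi-isometric embeddings $\mathbb{R}^{n}\to\mathbb{R}^{n-1}$ is the standard coarse-geometric obstruction and is in the same spirit as the quasi-isometry reduction via \cite[Proposition 4.6]{Berge} that the paper relies on.

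One small slip in the packing step: the image of a $1$-separated set under an $(L,C)$ quasi-isometric embedding is \emph{not} $\bigl(\tfrac{1}{L}-o(1)\bigr)$-separated; the additive constant $C$ does not vanish as $R\to\infty$, and $\tfrac{1}{L}-C$ may well be negative, in which case injectivity of $\Phi$ on the net is not even guaranteed. The standard fix is to take a maximal $r$-separated subset of $B_{\mathbb{R}^{n}}(0,R)$ with $r>L(C+1)$ fixed; its cardinality is still $\asymp_{r}R^{n}$, its image is $1$-separated and lands in $B_{\mathbb{R}^{n-1}}(\Phi(0),LR+C)$, and the count $R^{n}\lesssim R^{n-1}$ gives the contradiction. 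With that one-line correction the argument is complete.
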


While Proposition \ref{modulation_space_result} was proved by using the short-time Fourier transform, this is not available to us and we need to use the stratified structure of our group. The following result can be seen as partly generalizing Proposition \ref{modulation_space_result} to our setting.

\begin{theorem}
\label{new_geometric_embedding_result}
Let $(\mathbb{R}^n,*_G)$ be an admissible Lie group with rank $k$. There exists a geometric embedding \[F: M_{p,q}^{s}(\mathbb{R}^{k'}) \xrightarrow{ } M_{p,q}^{s}(G)\] for every $k' \leq k$, $1 \leq p,q < \infty$ and $s \in \mathbb{R}$. This is optimal in the sense that there does not necessarily exists a geometric embedding from $M_{p,q}^{s}(\mathbb{R}^{l})$ to $M_{p,q}^{s}(G)$ whenever $l > k$.
\end{theorem}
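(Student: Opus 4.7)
The plan is to construct the embedding $F$ by tensoring with a fixed Schwartz function that is well-localized in frequency. Choose a basis of $\mathbb{R}^n$ adapted to the stratification $\mathfrak{g} = V_1 \oplus V_2$, so that the first $k$ coordinates correspond to $V_1$, and view $\mathbb{R}^{k'} \subset V_1$ as the first $k'$ coordinates for $k' \leq k$. Fix a nonzero $g \in \mathcal{S}(\mathbb{R}^{n-k'})$ with $\hat{g}$ smooth and compactly supported in a small ball around the origin in $\mathbb{R}^{n-k'}$, and define
\[ F(f)(x,y) := f(x)\cdot g(y), \qquad x \in \mathbb{R}^{k'}, \; y \in \mathbb{R}^{n-k'}. \]
Then $\widehat{F(f)} = \hat{f}\otimes \hat{g}$ has essential support contained in $\mathrm{supp}(\hat{f})\times B(0,\epsilon)$, so that $F(f)\in Z'(\mathbb{R}^n)$ is a well-defined Fourier type distribution. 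Injectivity of $F$ is immediate from $g\not\equiv 0$.

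Next I would verify boundedness $F: M_{p,q}^{s}(\mathbb{R}^{k'})\to M_{p,q}^{s}(G)$ using a $\mathcal{U}(G;N)$-BAPU $(\psi_\ell)_{\ell\in N\setminus\{0\}}$ of the form from Example \ref{BAPU_example}. By the localization of $\hat{g}$, the product $\psi_\ell\cdot \widehat{F(f)}$ vanishes unless the quasi-norm ball $B^{\|\cdot\|_2}(\ell,R)$ meets the tube $\mathbb{R}^{k'}\times B(0,\epsilon)$. Using the quasi-norm $\|\cdot\|_2$ from \eqref{spesific_homogeneous_quasi_norm}, which agrees with the Euclidean norm on $V_1$, the contributing indices $\ell$ can be grouped into finitely many layers according to their projection onto $V_1$, each layer indexed (up to a bounded-multiplicity map) by a lattice point of $\mathbb{Z}^{k'}$ in $\mathbb{R}^{k'}$. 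A standard restriction argument, paralleling the one used in the proof of Theorem \ref{novelty_theorem}, together with $\mathcal{Q}^0(G;N)$-moderateness of the weight $\ell\mapsto (1+\|\ell\|_2^2)^{s/2}$ from Lemma \ref{lemma_moderate_weights}, gives the inequality $\|F(f)\|_{M_{p,q}^{s}(G)} \lesssim \|g\|_\ast \cdot \|f\|_{M_{p,q}^{s}(\mathbb{R}^{k'})}$ for a seminorm $\|g\|_\ast$ depending only on $g$ and on the chosen BAPUs.

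The main obstacle is the geometric condition. I would show that the essential support $\mathcal{C}[F(f)]$ in the covering $\mathcal{U}(G;N)$ is a cluster of lattice balls of bounded covering-diameter, located in a neighbourhood of the image of $\mathcal{C}[f]$ under the inclusion $\mathbb{R}^{k'}\hookrightarrow \mathbb{R}^{k'}\times\{0\}\subset\mathbb{R}^n$. By Proposition \ref{Hans_result}, the condition \eqref{geometric_condition} in Definition \ref{geometric_embedding_definition} then reduces to showing that the identity map from $(\mathbb{R}^{k'},d_{\mathcal{U}(\mathbb{R}^{k'})})$ onto its image in $(\mathbb{R}^n,d_{\mathcal{U}(G;N)})$ is a quasi-isometric embedding. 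Here the choice of quasi-norm $\|\cdot\|_2$ is essential: because $\|\cdot\|_2$ restricts to the Euclidean norm on $V_1$, the restricted covering $\mathcal{U}(G;N)\big|_{\mathbb{R}^{k'}\times\{0\}}$ becomes equivalent to $\mathcal{U}(\mathbb{R}^{k'})$ by an application of Proposition \ref{proposition_about_equivalence} analogous to the restriction step in Theorem \ref{novelty_theorem}. Equivalence of coverings translates into quasi-isometry of the associated metric spaces via Proposition \ref{Hans_result}, which produces the constants $L$ and $C$ of the geometric condition uniformly in $k\in\mathbb{N}_0$, since $(Q_i^0)^{k*}$ has bounded diameter for each fixed $k$.

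For optimality, I would simply take $G=(\mathbb{R}^n,+)$, which is an admissible Lie group of rank $k=n$. Then $M_{p,q}^{s}(G)=M_{p,q}^{s}(\mathbb{R}^n)$, and Proposition \ref{modulation_space_result} guarantees that there is no geometric embedding $M_{p,q}^{s}(\mathbb{R}^{l})\to M_{p,q}^{s}(\mathbb{R}^n)$ for $l>n=k$. This exhibits a stratified Lie group $G$ of rank $k$ for which the bound $k'\leq k$ cannot be relaxed, establishing the optimality clause in the statement.
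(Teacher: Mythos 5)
Your proposal follows the same architecture as the paper's proof: tensor $f$ with a frequency-localized window in the complementary variables, try to collapse the $M_{p,q}^{s}(G)$-norm onto the sublattice sitting in the first layer, observe that the induced map of covered frequency spaces is the inclusion of $V_1$, and deduce optimality from the Euclidean case via Proposition \ref{modulation_space_result}. The injectivity and optimality steps are fine. The gap is the claim that the contributing indices $\ell \in N\setminus\{0\}$ ``can be grouped into finitely many layers \dots up to a bounded-multiplicity map.'' This is false whenever $G$ is non-abelian. The elements of $\mathcal{U}(G;N)$ are the \emph{left translates} $B^{\|\cdot\|}(\ell,R)=\ell *_G B^{\|\cdot\|}(0,R)$, and for a step-two group left translation by $\ell=(\overline{\ell},\ell')$ shears the ball: its extent in the $V_2$-directions over the first-layer disc $\{\|\overline{\xi}-\overline{\ell}\|<R\}$ is governed by $\tfrac12[\overline{\ell},\overline{\xi}-\overline{\ell}]$, whose size is of order $\|\overline{\ell}\|R$. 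Hence the tube $\mathbb{R}^{k'}\times B(0,\epsilon)$ meets on the order of $\|\overline{\ell}\|\epsilon$ covering elements above each fixed $\overline{\ell}$, not boundedly many. Concretely, in $\mathbb{H}_1$ with $N=2\mathbb{Z}\times 2\mathbb{Z}\times\mathbb{Z}$ and $k'=1$, the ball $B^{\|\cdot\|}((2a,0,c),R)$ consists of the points $(2a+\xi_1,\xi_2,c+\xi_3-a\xi_2)$ with $\xi_1^2+\xi_2^2<R^2$, $|\xi_3|<R^2/4$, and this meets $\{|\zeta_2|,|\zeta_3|<\epsilon\}$ for every integer $c$ with $|c|\lesssim R^2+|a|\epsilon$.

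This defeats both remaining steps, not just their proofs. For boundedness: take $p=q=1$, $s=0$, and $\hat{f}_a$ a fixed bump translated to frequency $2a$. A volume count shows that at least $c\,\epsilon|a|/R^2$ distinct indices $m$ satisfy $\|\psi_m\cdot(\hat{f}_a\otimes\hat{g})\|_{L^\infty}\geq c'$ (the BAPU sums to one with overlap at most $N_{\mathcal{Q}}$, and each ball meets the set $\{|\hat f_a\otimes\hat g|\geq c_0\}$ in measure only $O(\epsilon R^2/|a|)$), and since $\|\mathcal{F}^{-1}v\|_{L^1}\geq\|v\|_{L^\infty}$ this gives $\|f_a\otimes g\|_{M_{1,1}(\mathbb{H}_1)}\gtrsim|a|$ while $\|f_a\|_{M_{1,1}(\mathbb{R})}=O(1)$: the plain tensor map is unbounded. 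For the geometric condition: $\mathcal{C}[F(f_a)]$ is a string of roughly $|a|$ covering elements spread in the central direction, of $d_{\mathcal{U}(\mathbb{H}_1)}$-diameter of order $\sqrt{|a|}$ (because $\|(0,0,c)\|\asymp\sqrt{|c|}$), so \eqref{geometric_condition} already fails for $f=g$. You should be aware that the paper's own proof buries the same difficulty in the assumption that a $\mathcal{U}(G;N)$-BAPU of product form $\phi_{\overline{m}}\otimes\psi'_{m'}$ exists; such a family is subordinate to the Euclidean uniform covering, which by Proposition \ref{Besov_are_all_the_same} is not equivalent to $\mathcal{U}(G;N)$, so it cannot be supported in the sheared balls. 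A genuine repair must let the window follow the shear: on the frequency band over $\overline{\ell}$ one should replace $\hat{g}(\zeta')$ by $\hat{g}\bigl(\zeta'-\tfrac12[\overline{\ell},\overline{\zeta}-\overline{\ell}]\bigr)$, so that each band has Fourier support inside boundedly many elements of $\mathcal{U}(G;N)$; this is a genuinely different operator from $f\mapsto f\otimes g$, and its boundedness and essential-support localization then do follow by the argument you outline.
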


\begin{proof}
It suffices to prove the embedding statement only for $k = k'$. Once this has been shown, the general statement follows from Proposition \ref{modulation_space_result} and the fact that the composition of two geometric embeddings is a geometric embedding. Let us first set the stage by deciding the correct lattice, homogeneous quasi-norm and BAPU. Through the exponential map, we can always find a lattice $N$ such that $N = l\mathbb{Z}^{k} \times N'$, where $l$ is some integer. In the Heisenberg case $\mathbb{H}_3$, we can take $l = 2$. In general however, we only know the existence of an $l \in \mathbb{N}$. We will work with the specific homogeneous quasi-norm $\|\cdot\|_{2}$ given in \eqref{spesific_homogeneous_quasi_norm} and utilize that $\|\cdot\|_{2}$ agree with the usual Euclidean norm on the subspace $\mathbb{R}^{k} \times \{0\}^{n-k}$. Fix a $\mathcal{U}(G;N)$-BAPU $(\psi_{m})_{m \in N \setminus \{0\}}$ such that \[\psi_{m}(x) = \phi_{\overline{m}}(x_1, \dots, x_k) \cdot \psi_{m'}^{'}(x_{k+1}, \dots, x_{n})\] where $\overline{m}$ denotes the projection onto the factor $l\mathbb{Z}^k$ and $(\phi_{\overline{m}})_{\overline{m} \in l \mathbb{Z}^k}$ is a $ \mathcal{U}(\mathbb{R}^k)$-BAPU. For the existence of such a $\mathcal{U}(G;N)$-BAPU, we refer the reader to Example \ref{BAPU_example}. \par
Define the map 
\begin{align*}
    F:\mathcal{S}(\mathbb{R}^k) \subset M_{p,q}^{s}(\mathbb{R}^{k}) & \xrightarrow{ } M_{p,q}^{s}(G) \\ f & \longmapsto F(f)(x) = \mathcal{F}_{n}^{-1}\left(\mathcal{F}_{k}(f)(x_1, \dots , x_k)\cdot \xi(x_{k+1}) \cdots \xi(x_{n})\right),
\end{align*}
where $\xi \in C_{c}^{\infty}(\mathbb{R})$ is a positive bump function supported in $(-1/2,1/2)$ and $\mathcal{F}_{k}$ and $\mathcal{F}_{n}$ denote the Fourier transforms in $k$ and $n$ variables, respectively. It is clear from our choice of homogeneous quasi-norm that the induced map of metric spaces \[F_{*}:\left(\mathbb{R}^k,d_{\mathcal{U}(\mathbb{R}^k)}\right) \xrightarrow{} \left(\mathbb{R}^n,d_{\mathcal{U}(G)}\right)\] can be taken to be the inclusion into the first $k$ coordinates. This is clearly a quasi-isometric embedding since the first $k$-coordinates in $(\mathbb{R}^n,*_G)$ is an abelian subgroup isomorphic to $(\mathbb{R}^k,+)$. We will show boundedness of $F$ on the Schwartz space $\mathcal{S}(\mathbb{R}^k)$ and then use that $\mathcal{S}(\mathbb{R}^k)$ is dense in $M_{p,q}^{s}(\mathbb{R}^k)$ for all $1 \leq p,q < \infty$ and $s \in \mathbb{R}$ \cite[Proposition 11.3.4]{TF_analysis} to obtain boundedness on all of $M_{p,q}^{s}(\mathbb{R}^k)$. \par 
We first compute that 
\begin{align*}
    \mathcal{F}_{n}^{-1}\left (\psi_{m} \cdot \mathcal{F}_{n}\left(F(f)\right) \right ) & = 
    \mathcal{F}_{n}^{-1}\left (\phi_{\overline{m}} \otimes \psi_{m'}^{'} \cdot \mathcal{F}_{k}\left(f \otimes \xi \otimes \cdots \otimes \xi\right) \right ) \\ & =
    \mathcal{F}_{k}^{-1}\left (\phi_{\overline{m}} \cdot  \mathcal{F}_{k}\left(f\right) \right ) \cdot \mathcal{F}_{n-k}^{-1}\left(\psi_{m'}^{'} \cdot \xi \otimes \cdots \otimes \xi\right)
\end{align*}
for every $f \in \mathcal{S}(\mathbb{R}^k)$ and $m \in N \setminus \{0\}$. Due to the support condition on $\xi$, the function $\psi_{m}^{'} \cdot \xi \otimes \cdots \otimes \xi$ is only non-zero whenever $m' = (0,\dots, 0)$. Thus we obtain 
\begin{align*}
\|F(f)\|_{M_{p,q}^{s}(G)}^{q} & = \sum_{m \in N \setminus \{0\}}\left(1 + \|m\|_{2}^{2}\right)^{\frac{qs}{2}}\left\|\mathcal{F}_{n}^{-1}\left (\psi_{m} \cdot \mathcal{F}_{n}\left(F(f)\right) \right  )\right\|_{L_{p}}^{q} \\ & = \sum_{\overline{m} \in l \mathbb{Z}^k}\left(1 + \|\overline{m}\|_{2}^{2}\right)^{\frac{qs}{2}}
\left\|\mathcal{F}_{k}^{-1}\left (\phi_{\overline{m}} \cdot  \mathcal{F}_{k}\left(f\right) \right ) \cdot \mathcal{F}_{n-k}^{-1}\left(\psi_{(0,\dots,0)}^{'} \cdot \xi \otimes \cdots \otimes \xi\right)\right\|_{L_{p}}^{q} \\ & \leq C\sum_{\overline{m} \in l \mathbb{Z}^k}\left(1 + \|\overline{m}\|_{E}^{2}\right)^{\frac{qs}{2}}
\left\|\mathcal{F}_{k}^{-1}\left (\phi_{\overline{m}} \cdot  \mathcal{F}_{k}\left(f\right) \right )\right\|_{L_{p}}^{q} \\ & = C\|f\|_{M_{p,q}^{s}(\mathbb{R}^k)},
\end{align*}
where $\|\cdot\|_{E}$ denotes the Euclidean norm in the coordinates $(x_1, \dots, x_k)$. Hence $F$ is a geometric embedding and the optimally statement follows from Proposition \ref{modulation_space_result}. 
\end{proof}

The following consequence of Theorem \ref{new_geometric_embedding_result} is both aesthetically pleasing and reveals the universality of the Feichtinger algebra on the real line. 

\begin{corollary}
The Feichtinger algebra $\mathcal{S}_{0}(\mathbb{R}) := M_{1,1}^{0,0}(\mathbb{R})$ embeds geometrically into all the standard modulation spaces $M_{p,q}(G)$ for $1 \leq p,q < \infty$ and any admissible Lie group $(\mathbb{R}^n,*_G)$.
\end{corollary}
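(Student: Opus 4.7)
The plan is to factor the desired embedding as a composition of two geometric embeddings, namely
\[\mathcal{S}_{0}(\mathbb{R}) = M_{1,1}(\mathbb{R}) \xrightarrow{\iota} M_{p,q}(\mathbb{R}) \xrightarrow{F} M_{p,q}(G),\]
where $\iota$ is the canonical inclusion and $F$ is supplied by Theorem \ref{new_geometric_embedding_result}. Since the composition of two geometric embeddings is again a geometric embedding (as remarked just before Theorem \ref{new_geometric_embedding_result}), it suffices to check that both $\iota$ and $F$ are individually geometric embeddings.

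First I would produce $F$. For any admissible Lie group $(\mathbb{R}^n,*_G)$ the first layer $V_1$ of the stratification is non-trivial, so the rank satisfies $k := \dim(V_1) \geq 1$. Applying Theorem \ref{new_geometric_embedding_result} with $k' = 1 \leq k$ and $s = 0$ gives a geometric embedding $F: M_{p,q}(\mathbb{R}) \to M_{p,q}(G)$ for all $1 \leq p,q < \infty$.

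The remaining task is to check that the inclusion $\iota$ is geometric. Both spaces are constructed from the same uniform covering $\mathcal{U}(\mathbb{R})$, so a common $\mathcal{U}(\mathbb{R})$-BAPU $(\psi_i)_{i \in I}$ computes both norms. Boundedness of $\iota$ is the well-known minimality of the Feichtinger algebra among the modulation spaces; in the decomposition-space framework of this paper it reduces to Bernstein's inequality applied to functions whose Fourier support lies in the uniformly bounded sets $Q_i \in \mathcal{U}(\mathbb{R})$, combined with the monotonicity $\ell^{1}(I) \hookrightarrow \ell^{q}(I)$ for $q \geq 1$. Injectivity of $\iota$ is immediate. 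Since $\|\mathcal{F}^{-1}(\psi_i \cdot \mathcal{F}(f))\|_{L^p} = 0$ if and only if $\psi_i \cdot \mathcal{F}(f) = 0$ as a distribution, this criterion is independent of $p$, and therefore $\mathcal{C}[f] = \mathcal{C}[\iota(f)]$ for every $f \in \mathcal{S}_0(\mathbb{R})$. The geometric condition \eqref{geometric_condition} then holds trivially with $L = 1$ and $C = 0$, so $\iota$ is geometric.

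No genuine obstacle arises: the statement is essentially a repackaging of Theorem \ref{new_geometric_embedding_result} with the classical minimality of $\mathcal{S}_0(\mathbb{R})$. The mildly delicate point is the verification that $\iota$ meets the formal definition of a geometric embedding, but this is immediate once one observes that the two spaces share the same underlying covering and hence the same notion of essential support.
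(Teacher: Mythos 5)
Your proposal is correct and follows essentially the same route as the paper: factor the map as $\mathcal{S}_0(\mathbb{R}) \hookrightarrow M_{p,q}(\mathbb{R}) \to M_{p,q}(G)$, note that the inclusion induces the identity at the level of the underlying metric spaces (the paper cites \cite[Theorem 12.2.2]{TF_analysis} for its boundedness, where you sketch the Bernstein-plus-$\ell^1\hookrightarrow\ell^q$ argument instead), and then invoke Theorem \ref{new_geometric_embedding_result} with $k'=1$, which applies since every admissible Lie group has rank at least one.
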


\begin{proof}
It follows from \cite[Theorem 12.2.2]{TF_analysis} that the inclusion $\mathcal{S}_{0}(\mathbb{R}) \hookrightarrow{} M_{p,q}(\mathbb{R})$ is bounded for every $1 \leq p,q < \infty$. This induces the identity map on the metric spaces level and is hence a geometric embedding. Since every admissible Lie group $(\mathbb{R}^n,*_G)$ have rank greater or equal to one, the result follows from Theorem \ref{new_geometric_embedding_result}.
\end{proof}

In \cite[Theorem 3.6]{Berge} the authors proved that, given two rational stratified Lie groups $(\mathbb{R}^n,*_G)$ and $(\mathbb{R}^n,*_H)$, the metric spaces $(G,d_{\mathcal{U}(G)})$ and $(H,d_{\mathcal{U}(H)})$ are not quasi-isometric unless the growth vectors $\mathfrak{G}(G)$ and $\mathfrak{G}(H)$ are equal. This shows that two generalized modulation spaces $M_{p_1,q_1}^{s_1}(G)$ and $M_{p_2,q_2}^{s_2}(H)$ can only be geometrically isomorphic whenever the growth vectors $\mathfrak{G}(G)$ and $\mathfrak{G}(H)$ are the same. In particular, not only are the Heisenberg modulation spaces $M_{p_1,q_1}^{s_1}(\mathbb{H}_n)$ distinct from $M_{p_2,q_2}^{s_2}(\mathbb{R}^{2n+1})$ for any non-trivial values of the parameters by Theorem \ref{novelty_theorem}, they are also not geometrically isomorphic.

\section{Looking Back and Ahead}
\label{sec: Looking_Back_and_Ahead}

Let us return and comment on the five questions raised in the introduction in the setting of admissible Lie groups:

\begin{enumerate}[label=\arabic*)]
    \item We have seen that the generalized $\alpha$-modulation spaces $M_{p,q}^{s,\alpha}(G)$ have natural coverings associated to them. Moreover, we can choose the explicit coverings $\mathcal{Q}^{\alpha}(G)$ given in \eqref{explicit_coverings_cases} for most purposes. How the coverings $\mathcal{U}(G;N)$ underlying the modulation spaces $M_{p,q}^{s}(G)$ is related to the polynomial growth of the lattice $N$ is further discussed in \cite[Chapter 3]{Berge}. 
    \item We have seen that the elements in $M_{p,q}^{s,\alpha}(G)$ are rather exotic distributions on $\mathbb{R}^n$. However, the containment of the Schwartz functions $\mathcal{S}(\mathbb{R}^n) \subset M_{p,q}^{s,\alpha}(G)$ as well as the explicit coverings $\mathcal{Q}^{\alpha}(G)$ in \eqref{explicit_coverings_cases} make the spaces $M_{p,q}^{s,\alpha}(G)$ more concrete. The coverings $\mathcal{Q}^{\alpha}(G)$ are especially explicit in low dimensions since lattices can be explicitly found and one can use the explicit homogeneous quasi-norm $\|\cdot\|_{2}$ given in \eqref{spesific_homogeneous_quasi_norm}.
    \item As we have mentioned, the modulation spaces on the Heisenberg group $M_{p,q}^{s}(\mathbb{H}_n)$ have been recently studied in \cite{David}. This space have another description through representations of a particular stratified Lie group known as the \textit{Dynin-Folland group}. We refer the reader to \cite{David} for more information on this construction. Also in the Heisenberg case, the Besov spaces $\mathcal{B}_{p,q}^{s}(\mathbb{H}_3)$ are new and very concrete spaces where the non-Euclidean dilations on $\mathbb{R}^3$ can be visualized. Moreover, the Besov coverings $\mathcal{B}(G)$ fits within a previously examined framework as explained in the remark preceding Theorem \ref{novelty_theorem}. 
    \item In contrast with the Euclidean setting, the coverings $\mathcal{Q}^{\alpha}(G)$ are not always almost structured coverings when $(\mathbb{R}^n,*_G)$ is an arbitrary rational stratified Lie group as we showed in Proposition \ref{almost_structured_coverings_result}. Moreover, the methods we have used depend more on geometric considerations (such as growth type) than the more prevalent analytic approach used in the Euclidean setting.
    \item The uniqueness of the generalized $\alpha$-modulation space $M_{p,q}^{s,\alpha}(G)$ was completely settled in Theorem \ref{novelty_theorem}. We showed that the spaces $M_{p,q}^{s,\alpha}(G)$ do form new spaces when the parameters $p,q,s,\alpha$ are non-trivial. 
\end{enumerate}

We hope that we have convinced the reader that the spaces $M_{p,q}^{s,\alpha}(G)$ are worthy of further study. We have avoided the quasi-Banach regime where the integrability parameters $p,q$ are also allowed to take the values $0 < p,q < 1$ to make the exposition less technical. We refer the interested reader to \cite[Chapter 9]{Felix_main} where the Euclidean $\alpha$-modulation spaces $M_{p,q}^{s,\alpha}(\mathbb{R}^n)$ are investigated in the quasi-Banach setting. \par 
The most obvious further work on the topic of generalized $\alpha$-modulation spaces is to prove the existence of BAPU's for the coverings $\mathcal{Q}^{\alpha}(G)$ given in \eqref{explicit_coverings_cases} for an arbitrary rational stratified Lie group $(\mathbb{R}^n,*_G)$. This would remove the slightly artificial restriction of having step less than or equal two. Let us also comment on a few other directions that have not yet been explored. \par 
One of the main advantages of the traditional modulation spaces $M_{p,q}^{s}(\mathbb{R}^n)$ is that they admit a \textit{coorbit description} in the following sense: For $f,g \in L^{2}(\mathbb{R}^n)$ with $g \neq 0$ we define the \textit{short-time Fourier transform} (STFT) of $f$ with respect to the window $g$ to be \[V_{g}f(x,\omega) := \int_{\mathbb{R}^n}f(t)\overline{g(t-x)}e^{-2\pi i t \cdot \omega} \, dt, \quad (x,\omega) \in \mathbb{R}^{2n}.\] One can extend the domain of the STFT to $\mathcal{S}(\mathbb{R}^n) \times \mathcal{S}'(\mathbb{R}^n)$ by duality. If $g \in \mathcal{S}(\mathbb{R}^n) \setminus \{0\}$ we have an alternative description of the space $M_{p,q}^{s}(\mathbb{R}^n)$ for $1 \leq p,q \leq \infty$ and $s \in \mathbb{R}$ as follows: A tempered distribution $f \in \mathcal{S}'(\mathbb{R}^n)$ belongs to the space $M_{p,q}^{s}(\mathbb{R}^n)$ if and only if \[\left(\int_{\mathbb{R}^n}\left(\int_{\mathbb{R}^n}|V_{g}f(x,\omega)|^{p}\left(1 + |x| + |\omega|\right)^{ps}\, dx\right)^{\frac{q}{p}} \, d\omega\right)^{\frac{1}{q}} < \infty.\]
We refer the reader to \cite[Chapter 11]{TF_analysis} for an approach to modulation spaces using the coorbit description. The name coorbit description comes from the fact that the STFT is a manifestation of the unitary representation theory of the Heisenberg group \cite[Chapter 9]{TF_analysis}. This falls within a larger framework developed in \cite{Hans_Grochenig1, Hans_Grochenig2} known as \textit{coorbit theory}. Many properties of the modulation spaces $M_{p,q}^{s}(\mathbb{R}^n)$ are more easily understood through the coorbit description. It would be advantageous to find a coorbit description for the modulation spaces $M_{p,q}^{s}(G)$ where $(\mathbb{R}^n,*_G)$ is any rational stratified Lie group. \par 
We would like to emphasize that the theory we have built for the boundary cases $M_{p,q}^{s}(G)$ and $\mathcal{B}_{p,q}^{s}(G)$ is interesting in itself. One can consider the \textit{Sobolev spaces} $W^{s,p}(G) := B_{p,p}^{s}(G)$ associated to any stratified Lie group $(\mathbb{R}^n,*_G)$. We have from Theorem \ref{novelty_theorem} that the spaces $W^{s,p}(G)$ do not coincide with any of the Euclidean Sobolev spaces $W^{s,p}(\mathbb{R}^n)$ unless $(s,p) = (0,2)$, in which case \[W^{0,2}(G) = W^{0,2}(\mathbb{R}^n) = L^{2}(\mathbb{R}^n).\] In particular, the spaces $H^{k}(G) := W^{k,2}(G)$ for $k \in \mathbb{N}$ are alternatives to the Hilbert space Sobolev spaces $H^{k}(\mathbb{R}^n)$ that permeates PDE theory and nearby disciplines. There are many notions of Sobolev spaces on stratified Lie groups in the literature, and it would be interesting to see how our approach fit in. \par 
The modulation spaces $M_{p,q}^{s}(G)$ have not been considered previously in the literature except on the groups $\mathbb{R}^n$ and $\mathbb{H}_{n}$. In the Euclidean case, the Feichtinger algebra $\mathcal{S}_{0}(\mathbb{R}^n) := M_{1,1}^{0,0}(\mathbb{R}^n)$ has several interesting properties: Every element in $\mathcal{S}_{0}(\mathbb{R}^n)$ is a continuous function and $\mathcal{S}_{0}(\mathbb{R}^n)$ is an algebra under both pointwise multiplication and convolution. Similar questions could be asked for the space $\mathcal{S}_{0}(G) := M_{1,1}^{0,0}(G)$ when $(\mathbb{R}^n,*_G)$ is a rational stratified Lie group. Moreover, it would be interesting to see whether the space $\mathcal{S}_{0}(G)$ satisfies a minimality characterization \cite[Theorem 12.1.8]{TF_analysis} similarly to the Feichtinger algebra $\mathcal{S}_{0}(\mathbb{R}^n)$. Finally, one could ask whether $\mathcal{S}_{0}(G)$ gives rise to a \textit{Banach Gelfand triple} \cite{Banach_Gelfand} \[\mathcal{S}_{0}(G) \xhookrightarrow{} L^{2}(\mathbb{R}^n) \xhookrightarrow{} \left(\mathcal{S}_{0}(G)\right)' \simeq M_{\infty,\infty}^{0}(G),\] for any admissible Lie group $(\mathbb{R}^n,*_G)$. These and many more questions could be illuminating even in a special case such as the \textit{free nilpotent Lie group} $\mathbf{F}_{k,2}$ with step two and rank $k$ whose Lie algebra is defined in \cite[Example 1.5]{Le_Donne_1}. This would generalize most of the known results as $\mathbf{F}_{n,1} = \mathbb{R}^n$ and $\mathbf{F}_{2,2} = \mathbb{H}_3$. We encourage the reader to explore these open questions and build on the work presented.

\Addresses

\end{document}